\def\D{{\mathrm D}}
\def\bzero{{\mathbf 0}}
\def\by{{\mathbf y}}
\def\bA{\mathbf A}
\def\bD{\mathbf D}
\def\bI{{\mathbf I}}
\def\bM{\mathbf M}
\def\bW{{\mathbf W}}
\def\bX{{\mathbf X}}
\def\bY{{\mathbf Y}}
\def\sR{{\mathbb R}}
\def\sS{{\mathbb S}}
\def\gD{{\mathcal{D}}}
\def\gM{{\mathcal{M}}}
\def\gP{{\mathcal{P}}}
\def\gH{{\mathcal{H}}}
\def\gU{{\mathcal{U}}}
\def\gG{{\mathcal{G}}}
\newcommand{\nnub}{\nonumber}
\def\Exp{{\rm Exp}}
\def\Retr{{\rm Retr}}
\DeclareMathOperator*{\argmin}{arg\,min}
\theoremstyle{plain}
\newtheorem{theorem}{Theorem}[section]
\newtheorem{proposition}{Proposition}[section]
\newtheorem{lemma}{Lemma}[section]
\newtheorem{corollary}{Corollary}[section]
\newtheorem{definition}{Definition}[section]
\newtheorem{assumption}{Assumption}[section]
\newtheorem{remark}{Remark}[section]
\title{An Adaptive Algorithm for Bilevel Optimization on Riemannian Manifolds}
\author{Xu Shi\textsuperscript{* 1} \quad\quad\quad\quad Rufeng Xiao\textsuperscript{* 1}\quad\quad\quad\quad Rujun Jiang\textsuperscript{$\ddagger$ 1, 2} \\
\vspace{-2mm} \\
\normalsize{\textsuperscript{1}School of Data Science, Fudan University}\\ \normalsize{\textsuperscript{2}Shanghai Key Laboratory for Contemporary Applied Mathematics, Fudan University}\\
\vspace{-2mm} \\
\normalsize{\texttt{\{xshi22, rfxiao24\}@m.fudan.edu.cn}}\\
\normalsize{ \texttt{rjjiang@fudan.edu.cn}}\\
}
\date{}
\begin{document}

\maketitle

\begingroup
\begin{NoHyper}
\renewcommand\thefootnote{*}
\footnotetext{Equal contributions.}
\renewcommand\thefootnote{$\ddagger$}
\footnotetext{Corresponding author.}
\end{NoHyper}
\endgroup

\begin{abstract}
Existing methods for solving Riemannian bilevel optimization (RBO) problems require prior knowledge of the problem's first- and second-order information and curvature parameter of the Riemannian manifold to determine step sizes, which poses practical limitations when these parameters are unknown or computationally infeasible to obtain. In this paper, we introduce the Adaptive Riemannian Hypergradient Descent (AdaRHD) algorithm for solving RBO problems. To our knowledge, AdaRHD is the first method to incorporate a fully adaptive step size strategy that eliminates the need for problem-specific parameters in RBO. We prove that AdaRHD achieves an $\mathcal{O}(1/\epsilon)$ iteration complexity for finding an $\epsilon$-stationary point, thus matching the complexity of existing non-adaptive methods. Furthermore, we demonstrate that substituting exponential mappings with retraction mappings maintains the same complexity bound. Experiments demonstrate that AdaRHD achieves comparable performance to existing non-adaptive approaches while exhibiting greater robustness.
\end{abstract}

\section{Introduction}
Bilevel optimization has garnered significant attention owing to its diverse applications in fields such as reinforcement learning \citep{konda1999actor,hong2023two}, meta-learning \citep{bertinetto2018meta,rajeswaran2019meta,ji2020convergence}, hyperparameter optimization \citep{franceschi2018bilevel,shaban2019truncated,yu2020hyper}, adversarial learning \citep{bishop2020optimal,wang2021fast,wang2022solving}, and signal processing \citep{kunapuli2008classification,flamary2014learning}. This paper focuses on ``Riemannian bilevel optimization (RBO)'' \citep{han2024framework,dutta2024riemannian,li2025riemannian}, a framework that arises in applications such as Riemannian meta-learning \citep{tabealhojeh2023rmaml}, neural architecture search \citep{sukthanker2020neural}, image segmentation \citep{ghadimi2018approximation}, Riemannian min-max optimization \citep{jordan2022first,huang2023gradient,zhang2023sion,han2023nonconvex,wang2023riemannian,hu2024extragradient}, and low-rank adaption \citep{zangrando2025debora}. The general formulation of RBO is as follows:
\begin{equation}
\label{p:primal}
\begin{array}{lcl}
&\min\limits_{x \in \gM_x} &F(x) := f(x, y^*(x)), \\
&{\rm s.t.} &y^*(x) = \argmin\limits_{y \in \gM_y} g (x,y),
\end{array}
\end{equation}
where $\gM_x, \gM_y$ are $d^x$- and $d^y$-dimensional complete Riemannian manifolds \citep{li2025riemannian}, the functions $f, g: \gM_x \times \gM_y \rightarrow \sR$ are smooth, the lower-level function $g(x,y)$ is geodesic strongly convex w.r.t. $y$, and the upper-level function $f$ is not required to be convex.

To address Problem \eqref{p:primal}, we extend the recent advance in Euclidean adaptive bilevel optimization \citep{yang2024tuning} to propose an adaptive Riemannian hypergradient method. Although certain aspects of Euclidean analysis extend naturally to Riemannian settings, the manifold's curvature parameters introduce many analytical challenges. We propose key technical difficulties in designing adaptive methods for solving RBO problems as follows:
\begin{enumerate}[(i)]
\item RBO necessitates that variables are constrained on Riemannian manifolds, making it intrinsically more complex than its Euclidean counterpart. Furthermore, critical parameters such as geodesic strong convexity, Lipschitz continuity, and curvature parameters over Riemannian manifolds are more computationally demanding to estimate than their Euclidean counterparts. Consequently, the geometric structure of the manifold creates distinct analytical challenges in the convergence analysis of adaptive algorithms.
\item Compared to single-level Riemannian (adaptive) optimization, RBO inherently involves interdependent variable updates, resulting in complex step size selection challenges arising from coupled (hyper)gradient dynamics.
\item Unlike non-adaptive RBO methods, adaptive step-size strategies can cause divergent behavior during initial iterations since the initial step size may be too large, which must be rigorously addressed to enable robust theoretical convergence analysis.
\end{enumerate}
In this work, we present a comprehensive convergence analysis that demonstrates our approach to resolving these challenges.

\subsection{Related works}
Table \ref{table1} summarizes key studies on (Riemannian) bilevel optimization that are relevant to our methods. The table compares their applicable scenarios, whether they are adaptive, and their computational complexity of first- and second-order information. Note that constants, such as condition numbers, are omitted from the complexity analysis for simplicity. For a comprehensive review of the related works, please refer to Appendix \ref{sec:related work}.
\begin{table*}[ht]
\centering
\caption{Comparisons of first-order and second-order complexities for reaching an $\epsilon$-stationary point. Here, ``Euc'' and ``Rie'' represent the feasible region of the algorithms are Euclidean space and Riemannian manifold, respectively. The notations ``Det'', ``F-S'', and ``Sto'' represent the applicable problems as deterministic, function-sum, and stochastic, respectively. Additionally, $G_f$ and $G_g$ are the gradient complexities of $f$ and $g$, respectively. $JV_g$ and $HV_g$ are the complexities of computing the second-order cross derivative and Hessian-vector product of $g$. The notation $\tilde{\mathcal{O}}$ denotes the omission of logarithmic terms in contrast to the standard $\mathcal{O}$ notation. Furthermore, the notation ``NA'' represents that the corresponding complexity is not applicable.}
\label{table1}
\resizebox{1.0\textwidth}{!}{
\begin{tabular}{ccccccccc}
\hline
Methods & Space & Adaptive & Type & $G_f$ & $G_g$ & $JV_g$ & $HV_g$ \\
\hline
D-TFBO \citep{yang2024tuning} & \multirow{2}{*}{Euc} & \multirow{2}{*}{$\boldsymbol{\mathcal{\checkmark}}$} & \multirow{2}{*}{Det} & $\mathcal{O}(1/\epsilon)$ & $\mathcal{O}(1/\epsilon^2)$ & $\mathcal{O}(1/\epsilon)$ & $\mathcal{O}(1/\epsilon^2)$\\
S-TFBO \citep{yang2024tuning} &   &  &  & $\tilde{\mathcal{O}}(1/\epsilon)$ & $\tilde{\mathcal{O}}(1/\epsilon)$ & $\tilde{\mathcal{O}}(1/\epsilon)$ & $\tilde{\mathcal{O}}(1/\epsilon)$\\
\hline
RHGD-HINV \citep{han2024framework} & \multirow{5}{*}{Rie} & \multirow{5}{*}{$\boldsymbol{\text{\ding{55}}}$} & \multirow{4}{*}{Det} & $\mathcal{O}(1/\epsilon)$ & $\tilde{\mathcal{O}}(1/\epsilon)$ & $\mathcal{O}(1/\epsilon)$ & NA\\
RHGD-CG \citep{han2024framework} &  &   &   & $\mathcal{O}(1/\epsilon)$ & $\tilde{\mathcal{O}}(1/\epsilon)$ & $\mathcal{O}(1/\epsilon)$ & $\tilde{\mathcal{O}}(1/\epsilon)$\\
RHGD-NS \citep{han2024framework} &  &   &   & $\mathcal{O}(1/\epsilon)$ & $\tilde{\mathcal{O}}(1/\epsilon)$ & $\mathcal{O}(1/\epsilon)$ & $\tilde{\mathcal{O}}(1/\epsilon)$\\
RHGD-AD \citep{han2024framework} &  &   &   & $\mathcal{O}(1/\epsilon)$ & $\tilde{\mathcal{O}}(1/\epsilon)$ & $\mathcal{O}(1/\epsilon)$ & $\tilde{\mathcal{O}}(1/\epsilon)$\\
\cline{4-8}
RSHGD-HINV \citep{han2024framework} & &    & F-S & $\mathcal{O}(1/\epsilon^2)$ & $\tilde{\mathcal{O}}(1/\epsilon^2)$ & $\mathcal{O}(1/\epsilon^2)$ & NA\\
\hline
RieBO \citep{li2025riemannian} & \multirow{2}{*}{Rie} & \multirow{2}{*}{$\boldsymbol{\text{\ding{55}}}$} & Det & $\mathcal{O}(1/\epsilon)$ & $\tilde{\mathcal{O}}(1/\epsilon)$ & $\mathcal{O}(1/\epsilon)$ & $\tilde{\mathcal{O}}(1/\epsilon)$\\
RieSBO \citep{li2025riemannian} &  &   & Sto & $\mathcal{O}(1/\epsilon^2)$ & $\tilde{\mathcal{O}}(1/\epsilon^2)$ & $\mathcal{O}(1/\epsilon^2)$ & $\tilde{\mathcal{O}}(1/\epsilon^2)$\\
\hline
RF$^2$SA \citep{dutta2024riemannian} & \multirow{2}{*}{Rie} & \multirow{2}{*}{$\boldsymbol{\text{\ding{55}}}$} & Det & $\tilde{\mathcal{O}}(1/\epsilon^{3/2})$ & $\tilde{\mathcal{O}}(1/\epsilon^{3/2})$ & NA & NA\\
RF$^2$SA \citep{dutta2024riemannian} &  &   & Sto & $\tilde{\mathcal{O}}(1/\epsilon^{7/2})$ & $\tilde{\mathcal{O}}(1/\epsilon^{7/2})$ & NA & NA\\
\hline
{\textbf{AdaRHD-GD (Ours)}} & \multirow{2}{*}{Rie} & \multirow{2}{*}{$\boldsymbol{\mathcal{\checkmark}}$} &\multirow{2}{*}{Det} & $\mathcal{O}(1/\epsilon)$ & $\mathcal{O}(1/\epsilon^2)$ & $\mathcal{O}(1/\epsilon)$ & $\mathcal{O}(1/\epsilon^2)$\\
{\textbf{AdaRHD-CG (Ours)}} &   &   &  & $\mathcal{O}(1/\epsilon)$ & $\mathcal{O}(1/\epsilon^2)$ & $\mathcal{O}(1/\epsilon)$ & $\tilde{\mathcal{O}}(1/\epsilon)$\\
\hline
\end{tabular}
}
\end{table*}

\subsection{Contributions}
This paper introduces the Adaptive Riemannian Hypergradient Descent (AdaRHD) algorithm, the first method to incorporate a fully adaptive step size strategy, eliminating the need for parameter-specific prior knowledge for RBO. The contributions of this work are summarized as follows:
\begin{enumerate}[(i)]
\item We develop an adaptive algorithm for solving RBO problems, eliminating reliance on prior knowledge of strong convexity, Lipschitzness, or curvature parameters. Our method achieves a convergence rate matching those of non-adaptive parameter-dependent approaches. Furthermore, by replacing exponential mappings with computationally efficient retraction mappings, our algorithm maintains comparable convergence guarantees while reducing computational costs.
\item We establish upper bounds for the total iterations of resolutions of the lower-level problem in Problem \eqref{p:primal} and the corresponding linear system required for computing the Riemannian hypergradient. The derived bounds match the iteration complexity of the standard AdaGrad-Norm method for solving the strongly convex problems \citep{xie2020linear}.
\item We evaluate our algorithm against existing methods on various problems over Riemannian manifolds, including the Riemannian hyper-representation and robust optimization problems. The results demonstrate that our method performs comparably with non-adaptive methods while exhibiting significantly greater robustness.
\end{enumerate}

\section{Preliminaries}
\label{sec:pre}
In this section, we review standard definitions and preliminary results on Riemannian optimization. All results presented here are available in the literature \citep{lee2006riemannian,boumal2023introduction,han2024framework,li2025riemannian}, we restate them for conciseness.

We first establish essential properties and notations for Riemannian manifolds. The definition of a Riemannian manifold has been conducted in the literature \citep{lee2006riemannian,boumal2023introduction}. The Riemannian metric on a Riemannian manifold $\gM$ at $x \in \gM$ is denoted by $\langle \cdot, \cdot \rangle_x: T_x\gM \times T_x\gM \rightarrow \sR$, with the induced norm on the tangent space $T_x\gM$ defined as $\|u\|_x = \sqrt{\langle u, u \rangle_x}$ for any $u \in T_x\gM$. We then recall the definitions of exponential mapping and parallel transport. For an tangent vector $u \in T_x\gM$ and a geodesic $c: [0,1] \rightarrow \gM$ satisfying $c(0) = x$, $c(1) = y$, and $c'(0) = u$, the exponential mapping $\Exp_x: T_x\gM \rightarrow \gM$ is defined as $\Exp_x(u) = y$. The inverse of the exponential mapping, $\Exp_x^{-1}: \gM \rightarrow T_x\gM$, is called logarithm mapping \citep[Definition 10.20]{boumal2023introduction}, and the Riemannian distance between two points $x, y \in \gM$ is given by $d(x, y) = \|\Exp_x^{-1}(y)\|_x = \|\Exp_y^{-1}(x)\|_y$. Parallel transport $\gP_{x_1}^{x_2}: T_{x_1}\gM \rightarrow T_{x_2}\gM$ is a linear operator that preserves the inner product structure, satisfying $\langle u, v \rangle_{x_1} = \langle \gP_{x_1}^{x_2}u, \gP_{x_1}^{x_2}v \rangle_{x_2}$ for all $u, v \in T_{x_1}\gM$.

We now define key properties of functions on Riemannian manifolds. The Riemannian gradient $\gG f(x) \in T_x\gM$ of a differentiable function $f \colon \gM \to \sR$ is the unique vector satisfying
\[
\langle \gG f(x), u \rangle_x = \D f(x)[u], \quad \forall u \in T_x\gM,
\]
where $\D f(x)[u]$ denotes the directional derivative of $f$ at $x$ along $u$ \citep{tu2011manifolds,boumal2023introduction,li2025riemannian}. For twice-differentiable $f$, the Riemannian Hessian $\gH f(x)$ is defined as the covariant derivative of $\gG f(x)$. A function $f \colon \gM \to \sR$ is ``geodesically (strongly) convex'' if $f(c(t))$ is (strongly) convex w.r.t. $t \in [0,1]$ for all geodesics $c \colon [0,1] \to \Omega$, where $\Omega \subseteq \gM$ is a geodesically convex set \citep[Definition 4]{li2025riemannian}. Particularly, if $f$ is twice-differentiable, $\mu$-geodesic strong convexity of $f$ is equivalent to $\gH f(x) \succeq \mu \mathrm{Id}$, where $\mathrm{Id}$ is the identity operator. Further, $f$ is ``geodesically $L_f$-Lipschitz smooth'' if
\[
\|\gG f(x) - \gP_y^x \gG f(y)\|_x \le L_f d(x,y), \quad \forall x, y \in \gM,
\]
and for twice-differentiable $f$, this property is equivalent to $\gH f(x) \preceq L_f \mathrm{Id}$.

Finally, for a bi-function $f \colon \gM_x \times \gM_y \to \sR$, the Riemannian gradients of $f$ w.r.t. $x$ and $y$ are denoted by $\gG_x f(x,y)$ and $\gG_y f(x,y)$, respectively, while the Riemannian Hessians are $\gH_x f(x,y)$ and $\gH_y f(x,y)$. Moreover, the Riemannian cross-derivatives $ \gG_{xy}^2 f(x,y) \colon T_y\gM_y \to T_x\gM_x $ and $ \gG_{yx}^2 f(x,y) \colon T_x\gM_x \to T_y\gM_y $, discussed in \cite{han2024framework,li2025riemannian}, are linear operators. Furthermore, the operator norm of a linear operator $\gG \colon T_y\gM_y \to T_x\gM_x$ is defined as
\[
\|\gG\|_x = \sup\limits_{u \in T_y\gM_y, \|u\|_y = 1} \|\gG[u]\|_x.
\]
Building on the concepts of Lipschitz continuity (cf. Definition \ref{def:lip}) and geodesic strong convexity for functions defined on Riemannian manifolds, we present the following proposition.
\begin{proposition}[\cite{boumal2023introduction,han2024framework,li2025riemannian}]
\label{prop:lsmoothmustrong}
For a function $f: \gM \rightarrow \sR$, if its Riemannian gradient $\gG f$ is $L$-Lipschitz continuous, then for all $x,y \in \gU \subseteq \gM$, it holds that
\begin{equation*}
f(y) \le f(x) + \left\langle \gG f(x), \Exp^{-1}_x(y) \right\rangle_x + \frac{L}{2 } d^2(x,y).
\end{equation*}
If $f$ is $\mu$-geodesic strongly convex, then for all $x,y \in \gU$, it holds that 
\begin{equation}
\label{equ:strongconv}
f(y) \ge f(x) + \left\langle \gG f(x), \Exp^{-1}_x(y) \right\rangle_x + \frac{\mu}{2 } d^2(x,y).
\end{equation}
\end{proposition}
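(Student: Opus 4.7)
The plan is to reduce both inequalities to one-dimensional statements along geodesics, which is the standard device for lifting Euclidean first-order inequalities to the Riemannian setting. Let $c \colon [0,1] \to \gM$ denote the minimizing geodesic with $c(0)=x$, $c(1)=y$, and $c'(0) = \Exp^{-1}_x(y)$, so that $\|c'(0)\|_x = d(x,y)$ and $d(x,c(t)) = t\, d(x,y)$. Set $\phi(t) := f(c(t))$, so that $\phi'(t) = \langle \gG f(c(t)), c'(t)\rangle_{c(t)}$ and $f(y)-f(x) = \int_0^1 \phi'(t)\,dt$.

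For the first (descent) inequality, I would use that $c'(t)$ is parallel along $c$, i.e.\ $c'(t) = \gP_x^{c(t)} c'(0)$, and that parallel transport is an isometry. This gives
\begin{equation*}
f(y)-f(x) - \bigl\langle \gG f(x), \Exp^{-1}_x(y)\bigr\rangle_x
= \int_0^1 \bigl\langle \gP_{c(t)}^x \gG f(c(t)) - \gG f(x),\, c'(0)\bigr\rangle_x \, dt.
\end{equation*}
Applying Cauchy--Schwarz together with the Lipschitz hypothesis $\|\gP_{c(t)}^x \gG f(c(t)) - \gG f(x)\|_x \le L\, d(x,c(t)) = L t\, d(x,y)$ and $\|c'(0)\|_x = d(x,y)$ bounds the integrand by $L t\, d^2(x,y)$, whose integral over $[0,1]$ is $\tfrac{L}{2}d^2(x,y)$. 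This yields the claimed upper bound.

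For the second (strong-convexity) inequality, I would unpack the definition of $\mu$-geodesic strong convexity: along the geodesic $c$, the map $\phi$ is strongly convex on $[0,1]$, and since $\|c'(0)\|_x^2 = d^2(x,y)$ the appropriate modulus is $\mu\, d^2(x,y)$. The Euclidean strong-convexity inequality applied at $t=0,1$ then reads $\phi(1) \ge \phi(0) + \phi'(0) + \tfrac{\mu}{2}d^2(x,y)$, which translates back to $f(y) \ge f(x) + \langle \gG f(x), \Exp^{-1}_x(y)\rangle_x + \tfrac{\mu}{2}d^2(x,y)$.

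The main conceptual hurdle is justifying the two manifold-specific ingredients: the fact that the geodesic velocity is parallel (so that parallel transport converts the difference of gradients at different base points into something comparable via the Lipschitz bound), and the equivalence of ``$f\circ c$ is strongly convex in $t$ with modulus $\mu\,d^2(x,y)$'' with ``$f$ is $\mu$-geodesic strongly convex.'' Once these are invoked, both bounds reduce to the classical one-dimensional arguments. I would state these facts explicitly at the start of the proof, citing \cite{boumal2023introduction}, and then present the two short derivations sketched above.
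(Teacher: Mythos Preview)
Your proposal is correct and follows the standard approach. Note, however, that the paper does not actually prove this proposition: it is stated in the Preliminaries section as a known result, with the proof deferred to the cited references \cite{boumal2023introduction,han2024framework,li2025riemannian}. Your argument---pulling back to a one-dimensional function along the minimizing geodesic, using that the geodesic velocity is parallel so that the Lipschitz hypothesis can be applied after transporting gradients, and invoking the definition of geodesic strong convexity for the second part---is precisely the argument one finds in those references (see, e.g., \cite[Section 10.4]{boumal2023introduction}).
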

Additionally, for the Lipschitzness of the functions and operators, and an important result of the trigonometric distance bound over the Riemannian manifolds, we present them in Appendix \ref{sec:addi preliminaries}.

\section{Adaptive Riemannian hypergradient descent algorithms}
\label{sec:adarhd}
Standard Riemannian bilevel optimization (RBO) approaches \citep{dutta2024riemannian,han2024framework,li2025riemannian} determine step sizes for updating the variables using problem-specific parameters such as strong convexity, Lipschitzness, and curvature constants. However, these parameters are frequently impractical to estimate or compute, posing challenges for step size determination. This limitation underscores the need for adaptive RBO algorithms that operate without prior parameter knowledge.

\subsection{Approximate Riemannian hypergradient}
\label{sec:appr hyperg}
Prior to presenting our primary algorithm, we first introduce the Riemannian hypergradient, as the core concept of our methodology relies on this construct. Following general bilevel optimization frameworks, we define the Riemannian hypergradient of $ F(x) $ in Problem \eqref{p:primal} as follows:
\begin{proposition}
\citep{han2024framework,li2025riemannian}
\label{prop:hyperg}
The Riemannian hypergradient of $F(x)$ are given by 
\begin{equation}
\label{equ:hygra}
\gG F(x) =  \gG_x f(x, y^*(x)) - \gG^2_{xy} g(x,y^*(x)) \gH_y^{-1} g(x, y^*(x)) [ \gG_y f(x, y^*(x))] \in T_x \gM.
\end{equation}
\end{proposition}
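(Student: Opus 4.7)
The plan is to follow the standard implicit-function-theorem derivation of the hypergradient, adapted to the Riemannian setting. First I would observe that since $g(x,\cdot)$ is geodesically strongly convex and smooth, the minimizer $y^*(x)$ is unique and, by the Riemannian implicit function theorem applied to the optimality condition $\gG_y g(x,y^*(x)) = 0 \in T_{y^*(x)}\gM_y$, the map $x \mapsto y^*(x)$ is smooth in a neighborhood of each $x$, with a well-defined differential $\D y^*(x): T_x\gM_x \to T_{y^*(x)}\gM_y$.

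Next, for any $u \in T_x\gM_x$, I would apply the Riemannian chain rule to $F(x) = f(x, y^*(x))$ to obtain
\begin{equation*}
\langle \gG F(x), u \rangle_x = \langle \gG_x f(x, y^*(x)), u \rangle_x + \langle \gG_y f(x, y^*(x)), \D y^*(x)[u] \rangle_{y^*(x)}.
\end{equation*}
Then I would identify $\D y^*(x)[u]$ by taking the covariant derivative (along the Levi-Civita connection on $\gM_x \times \gM_y$) of the identity $\gG_y g(x, y^*(x)) = 0$ in the direction $u$. This yields
\begin{equation*}
\gG^2_{yx} g(x, y^*(x))[u] + \gH_y g(x, y^*(x))\bigl[\D y^*(x)[u]\bigr] = 0,
\end{equation*}
and, using that $\gH_y g(x,y^*(x))$ is positive definite by the $\mu$-geodesic strong convexity of $g(x,\cdot)$, I can invert it to get $\D y^*(x)[u] = -\gH_y^{-1} g(x,y^*(x))\bigl[\gG^2_{yx} g(x,y^*(x))[u]\bigr]$.

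Substituting this back and using (i) self-adjointness of the Riemannian Hessian $\gH_y g(x,y^*(x))$ with respect to $\langle\cdot,\cdot\rangle_{y^*(x)}$, and (ii) the fact that $\gG^2_{xy} g(x,y^*(x))$ is the adjoint of $\gG^2_{yx} g(x,y^*(x))$ (both standard consequences of the symmetry of second covariant derivatives via the torsion-free Levi-Civita connection), I can rewrite
\begin{equation*}
\langle \gG_y f(x,y^*(x)), \D y^*(x)[u] \rangle_{y^*(x)} = -\bigl\langle \gG^2_{xy} g(x,y^*(x)) \gH_y^{-1} g(x,y^*(x))[\gG_y f(x,y^*(x))], u \bigr\rangle_x.
\end{equation*}
Since the resulting identity $\langle \gG F(x), u \rangle_x = \langle \gG_x f - \gG^2_{xy} g\, \gH_y^{-1} g\, \gG_y f,\, u \rangle_x$ holds for all $u \in T_x\gM_x$, the Riesz-type identification of tangent vectors on $(T_x\gM_x, \langle\cdot,\cdot\rangle_x)$ gives \eqref{equ:hygra}.

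The main obstacle I anticipate is not any single computation, but the bookkeeping surrounding the covariant-differentiation step: one must ensure that differentiating $\gG_y g(x,y^*(x)) = 0$ along $u$ really produces the linear operators $\gG^2_{yx} g$ and $\gH_y g$ rather than an expression involving extrinsic derivatives, and that the adjoint relationship between $\gG^2_{xy} g$ and $\gG^2_{yx} g$ is legitimate on the product manifold $\gM_x \times \gM_y$. Once these identifications are made precise (using the symmetry of the Hessian of $g$ viewed as a function on $\gM_x \times \gM_y$, together with the block structure of its second covariant derivative), the remainder of the argument is algebraic and proceeds exactly as in the Euclidean case.
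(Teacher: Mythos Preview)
The paper does not actually prove this proposition itself; it is quoted from \cite{han2024framework,li2025riemannian}, and the only commentary the paper offers is that the result ``fundamentally hinges on the implicit function theorem for Riemannian manifolds \cite{han2023nonconvex}, which necessitates the invertibility of the Hessian of the lower-level objective function $g$ w.r.t.\ $y$.'' Your proposal is precisely this implicit-function-theorem derivation and is correct; there is nothing further to compare against.
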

Proposition \ref{prop:hyperg} fundamentally hinges on the implicit function theorem for Riemannian manifolds \cite{han2023nonconvex}, which necessitates the invertibility of the Hessian of the lower-level objective function $g$ w.r.t. $y$. In this paper, this requirement is satisfied as $g$ is geodesically strongly convex w.r.t. $y$. Consequently, $y^*(x)$ is unique and differentiable. 

However, the Riemannian hypergradient defined in \eqref{equ:hygra} is computationally challenging to evaluate. First, the exact solution $y^*(x)$ of the lower-level objective is not explicitly available, necessitating the use of an approximate solution $\hat{y}$, we employ the adaptive Riemannian gradient descent method to compute this approximation. Second, calculating the Hessian-inverse-vector product $\gH_y^{-1} g(x, y^*(x)) [ \gG_y f(x, y^*(x))]$ in \eqref{equ:hygra} incurs prohibitive computational costs. To address this, given the approximation $\hat{y}$ of $y^*(x)$, we can approximate the Hessian-inverse-vector product $\gH_y^{-1} g(x, \hat{y}) [ \gG_y f(x, \hat{y}) ]$ by solving the linear system $\gH_y g[v](x, \hat{y}) = \gG_y f(x, \hat{y})$, which is originated from the following quadratic problem:
\begin{equation}
\label{equ:defR}
\min_{v\in T_{\hat{y}}\gM_y} R(x,\hat{y},v) = \frac{1}{2}\left\langle v, \gH_y g(x, \hat{y})[v]\right\rangle_{\hat{y}} - \left\langle v, \gG_y f(x, \hat{y})\right\rangle_{\hat{y}}.
\end{equation}
Denote $\hat{v}$ as the approximate solution of Problem \eqref{equ:defR}. In this paper, we use the adaptive gradient descent method (cf. Steps \ref{alg:AdaRHD:while nabla v}-\ref{alg:AdaRHD:endwhile nabla v} of Algorithm \ref{alg:AdaRHD}) or the tangent space conjugate gradient method \citep{trefethen2022numerical,boumal2023introduction} (cf. Step \ref{alg:AdaRHD:cg}) to obtain such a $\hat{v}$. Specifically, the tangent space conjugate gradient algorithm is presented in Algorithm \ref{alg:cg} of Appendix \ref{sec:tscg}.

Then, given the estimations $\hat y$ and $\hat v$, the approximate Riemannian hypergradient \citep{dutta2024riemannian,han2024framework,li2025riemannian} is defined as follows,  
\begin{equation}
\label{equ:appr hygra}
\widehat{\gG} F(x,\hat{y},\hat{v}) = \gG_x f(x,\hat{y}) - \gG^2_{xy} g(x,\hat{y})\hat{v} \in T_x \gM_x. 
\end{equation}

\subsection{Adaptive Riemannian hypergradient descent algorithm: AdaRHD}
Motivated by the recent adaptive methods \citep{yang2024tuning} for solving general bilevel optimization, we employ the
``inverse of cumulative (Riemannian) gradient norm'' strategy to design the adaptive step sizes \citep{xie2020linear,ward2020adagrad,yang2024tuning}, i.e., adapting the step sizes based on accumulated Riemannian (hyper)gradient norms.

Given the total iterations $T$, set the accuracies for solving the lower-level problem of Problem \eqref{p:primal} and linear system as $\epsilon_y = 1/T$ and $\epsilon_v = 1/T$. Our Adaptive Riemannian Hypergradient Descent (AdaRHD) algorithm for solving Problem \eqref{p:primal} is presented in Algorithm \ref{alg:AdaRHD}. For simplicity, when employing the gradient descent and conjugate gradient to solve the linear system, respectively, we denote AdaRHD as AdaRHD-GD and AdaRHD-CG, respectively.

\begin{algorithm}[ht]
\caption{\textbf{Ada}ptive \textbf{R}iemannian \textbf{H}ypergradient \textbf{D}escent (AdaRHD)}
\label{alg:AdaRHD}
\begin{algorithmic}[1]
\State Initial points $x_0 \in \gM_x, y_0 \in \gM_y$, and $v_0\in T_{y_0}\gM$, initial step sizes $ a_0 >0$, $ b_0 >0$, and $ c_0 >0$, total iterations $T$, error tolerances $\epsilon_y = \epsilon_v = \frac{1}{T}$.
\For{$t=0,1,2,...,T-1$}
\State{Set $k = 0$ and $y_{t}^0 = y_{t-1}^{K_{t-1}}$ if $t > 0$ and $y_0$ otherwise.}
\While{$\|\gG_y g(x_t, y_t^k)\|_{y_t^{k}}^2 > \epsilon_y$}
\State $ b_{k+1}^2 =  b_k^2 + \|\gG_y g(x_t, y_t^k)\|_{y_t^k}^2$, 
\State $y_t^{k+1} = \Exp_{y_t^k}(- \frac{1}{ b_{k+1}} \gG_y g(x_t, y_t^k) )$, 
\State $k=k+1.$
\EndWhile
\State{$K_t = k$.}
\State Set $n = 0$ and $v_{t}^0 = \gP_{y_{t-1}^{K_{t-1}}}^{y_t^{K_t}} v_{t-1}^{N_{t-1}}$ if $t > 0$ and $v_0$ otherwise.
\While{$\|\nabla_v R(x_t, y_t^{K_t}, v_t^n)\|_{y_t^{K_t}}^2 > {\epsilon_v}$}\label{alg:AdaRHD:while nabla v}
\State $ c_{n+1}^2 =  c_n^2 + \|\nabla_v R(x_t, y_t^{K_t}, v_t^n)\|_{y_t^{K_t}}^2$, 
\State $v_t^{n+1} = v_t^n - \frac{1}{ c_{n+1}}\nabla_v R(x_t, y_t^{K_t}, v_t^n)$, \Comment{Gradient descent}
\State $n=n+1.$
\EndWhile\label{alg:AdaRHD:endwhile nabla v}
\State \textbf{Or} set $v_{t}^0 = 0$ and invoke $v_{t}^{n}=\hyperref[alg:cg]{{\rm TSCG}}(\gH_y g(x_t, y_t^{K_t}), \gG_y f(x_t, y_t^{K_t}), v_t^0, \epsilon_v).$ \Comment{Conjugate gradient}\label{alg:AdaRHD:cg}
\State{$N_t = n.$}
\State{$\widehat{\gG} F(x_t, y_t^{K_t}, v_t^{N_t}) = \gG_x f(x_t,y_t^{K_t}) - \gG^2_{xy} g(x_t,y_t^{K_t})[v_t^{N_t}],$}
\State{$a_{t+1}^2 =  a_t^2 + \|\widehat{\gG} F(x_t, y_t^{K_t}, v_t^{N_t})\|_{x_t}^2$,}
\State{$x_{t+1} = \Exp_{x_t}( - \frac{1}{ a_{t+1}} \widehat{\gG} F(x_t, y_t^{K_t}, v_t^{N_t}))$.}\label{alg:AdaRHD updatea}
\EndFor
\end{algorithmic} 
\end{algorithm}

\subsection{Convergence analysis}
\label{sec:conv}
This section establishes the errors between the approximate Riemannian hypergradient \eqref{equ:appr hygra} and the exact Riemannian hypergradient \eqref{equ:hygra}, the convergence result of Algorithm \ref{alg:AdaRHD}, and the corresponding computation complexity.

\subsubsection{Definitions and assumptions}
Similar to the definition of an $\epsilon$-stationary point in general bilevel optimization \citep{ghadimi2018approximation,ji2021bilevel,ji2022will}, the definition of an $\epsilon$-stationary point in Riemannian bilevel optimization \citep{dutta2024riemannian,han2024framework,li2025riemannian} is given as follows.
\begin{definition}[$\epsilon$-stationary point]
A point $x \in \gM_x$ is an $\epsilon$-stationary point of Problem \eqref{p:primal} if it satisfies $\| \gG F(x) \|^2_x \le \epsilon$.
\end{definition}

We first concern the upper bound of curvature constant $\zeta(\tau, c)$ defined in Lemma \ref{lem:trig}.
\begin{assumption}
\label{ass:curvature}
The manifolds $\gM_x$ and $\gM_y$ are complete Riemannian manifolds. Moreover, $\gM_y$ is a Hadamard manifold whose sectional curvature is lower bounded by $\tau < 0$. Moreover, for all iterates $y_t^k$ in the lower-level problem, the curvature constant $\zeta(\tau, d(y_t^k, y^*(x_t)))$ defined in Lemma \ref{lem:trig} is always upper bounded by a constant $\zeta$ for all $t \ge 0$ and $k \ge 0$.
\end{assumption}
Assumption \ref{ass:curvature} is commonly used in the Riemannian optimization, which ensures that the lower-level objective $g$ can be geodesically strongly convex \citep{zhang2016first,li2025riemannian} and is essential for the convergence of the lower-level problem \citep{han2024framework,li2025riemannian}.

We then adopt the following assumptions concerning the fundamental properties of the upper- and lower-level objectives of Problem \eqref{p:primal}.
\begin{assumption}
\label{ass:str}
Function $f(x,y)$ is continuously differentiable and $g(x,y)$ is twice continuously differentiable for all $(x,y)\in \gU = \gU_x \times \gU_y \subseteq \gM_x \times \gM_y$, and $g(x,y)$ is $\mu$-geodesic strongly convex in $y \in \gU_y$ for any $x \in \gU_x$.
\end{assumption}
This assumption is prevalent in (Riemannian) bilevel optimization \citep{ghadimi2018approximation,ji2021bilevel,ji2022will,liu2022bome,hong2023two,kwon2023fully,dutta2024riemannian,han2024framework,li2025riemannian}. Under this assumption, the optimal solution $y^*(x)$ of the lower-level problem is unique for all $x \in \gM_x$ and differentiable \citep{han2024framework,li2025riemannian}, ensuring the existence of the Riemannian hypergradient \eqref{prop:hyperg}.

\begin{assumption}
\label{ass:lips}
Function $f(x,y)$ is $l_f$-Lipschitz continuous in $\gU \subseteq \gM_x \times \gM_y$. The Riemannian gradients $\gG_x f(x,y)$ and $\gG_y f(x,y)$ are $L_f$-Lipschitz continuous in $\gU$. The Riemannian gradients $\gG_x g(x,y)$ and $\gG_y g(x,y)$ are $L_{g}$-Lipschitz continuous in $\gU$. Furthermore, the Riemannian Hessian $\gH_y g(x,y)$, cross derivatives $\gG^2_{xy} g(x,y)$, $\gG^2_{yx} g(x,y)$ are $\rho$-Lipschitz in $\gU$. 
\end{assumption}
Assumption \ref{ass:lips} is a standard condition in the existing literature on (Riemannian) bilevel optimization \citep{ghadimi2018approximation,ji2021bilevel,ji2022will,han2024framework,li2025riemannian}. These conditions ensure the smoothness of the objective function $F$ in Problem \eqref{p:primal} and the Riemannian hypergradient $\gG F$ defined in \eqref{equ:hygra}.

\begin{assumption}
\label{ass:inf}
The minimum of $F$ over $\gM_x$, denote as $F^*$, is lower-bounded.
\end{assumption}
Assumption \ref{ass:inf} concerns the existence of the minimum of the upper-level problem, which is a minor requirement.

\subsubsection{Convergence results}
Denote $\hat{v}_t^*(x,y) := \argmin_{v\in T_{y}\gM_y} R(x,y,v)$. We can then bound the estimation error of the proposed schemes of approximated hypergradient as follows.
\begin{lemma}[Hypergradient approximation error bound]
\label{lem:hgerror}
Suppose that Assumptions \ref{ass:curvature}, \ref{ass:str}, \ref{ass:lips}, and \ref{ass:inf} hold. The error for the approximated Riemannian hypergradient generated by Algorithm \ref{alg:AdaRHD} satisfies, 
\[
\| \widehat \gG F(x_t, y_t^{K_t}, v_t^{N_t}) - \gG F(x_t) \|_{x_t} \le L_1 d \left(y^*(x_t), y_t^{K_t} \right) + L_{g} \| v_t^{N_t} - \hat{v}_t^*(x_t, y_t^{K_t}) \|_{y_t^{K_t}},
\]
where $L_1 := L_{f} + \frac{l_f \rho}{\mu} + L_{g}(\frac{L_f}{\mu} + \frac{l_f \rho}{\mu^2})$.
\end{lemma}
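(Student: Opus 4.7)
The plan is to decompose the error according to the two sources of inexactness (the approximate lower-level solution $y := y_t^{K_t}$ and the approximate linear-system solution $v := v_t^{N_t}$) and then bound each piece using the Lipschitz and strong-convexity assumptions. Writing $y^* := y^*(x_t)$, $v^* := \hat v_t^*(x_t, y) = \gH_y^{-1} g(x_t, y)[\gG_y f(x_t, y)]$, and $v^{**} := \gH_y^{-1} g(x_t, y^*)[\gG_y f(x_t, y^*)]$, the exact hypergradient $\gG F(x_t)$ equals $\gG_x f(x_t, y^*) - \gG^2_{xy} g(x_t, y^*)[v^{**}]$, so
\[
\widehat\gG F(x_t, y, v) - \gG F(x_t) = \bigl[\gG_x f(x_t, y) - \gG_x f(x_t, y^*)\bigr] - \bigl[\gG^2_{xy} g(x_t, y)[v] - \gG^2_{xy} g(x_t, y^*)[v^{**}]\bigr].
\]
After taking norms and applying the triangle inequality, I would further split the second bracket by inserting $\pm \gG^2_{xy} g(x_t, y)[v^*]$ to isolate the $v$--$v^*$ discrepancy from the $y$--$y^*$ discrepancy.

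The first bracket is handled immediately by the $L_f$-Lipschitzness of $\gG_x f$ in Assumption 4.3, yielding the $L_f\,d(y,y^*)$ contribution to $L_1$. The term $\gG^2_{xy} g(x_t, y)[v - v^*]$ is bounded via the operator-norm estimate $\|\gG^2_{xy} g(x_t, y)\|_{x_t} \le L_g$ (which follows from the $L_g$-Lipschitzness of $\gG_x g$), producing the second summand $L_g\,\|v - v^*\|_y$ in the claim.

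The main work lies in bounding the remaining term $\gG^2_{xy} g(x_t, y)[v^*] - \gG^2_{xy} g(x_t, y^*)[v^{**}]$. I would represent each side as a product $A\,B\,C$ with $A$ the cross-derivative, $B$ the inverse Hessian, and $C = \gG_y f$, then apply the standard three-factor telescoping
\[
A_1 B_1 C_1 - A_2 B_2 C_2 = (A_1 - A_2)B_1 C_1 + A_2 (B_1 - B_2) C_1 + A_2 B_2 (C_1 - C_2),
\]
indexed so that $1$ refers to evaluation at $(x_t, y)$ and $2$ at $(x_t, y^*)$. Each factor admits a uniform operator-norm bound ($L_g$, $1/\mu$, $l_f$) from Assumptions 4.2 and 4.3. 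Each difference is Lipschitz in $d(y, y^*)$: the cross-derivative with constant $\rho$ by assumption, $\gG_y f$ with constant $L_f$, and the inverse Hessian with constant $\rho/\mu^2$, which I would derive from the identity $\gH_y^{-1} g(x_t,y) - \gH_y^{-1} g(x_t,y^*) = \gH_y^{-1} g(x_t, y)\bigl[\gH_y g(x_t, y^*) - \gH_y g(x_t, y)\bigr]\gH_y^{-1} g(x_t, y^*)$ combined with Lipschitzness of $\gH_y g$ and the $\mu$-lower bound on its spectrum. Multiplying the uniform bounds by the Lipschitz constants and summing gives the three contributions $\frac{l_f \rho}{\mu}$, $\frac{L_g L_f}{\mu}$, and $\frac{L_g l_f \rho}{\mu^2}$ in $L_1$.

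The main obstacle I anticipate is notational rather than conceptual: the operators $A_i$, $B_i$, $C_i$ act between different tangent spaces ($T_y \gM_y$ versus $T_{y^*}\gM_y$, and into $T_{x_t}\gM_x$), so the telescoping implicitly uses parallel transports and the ``Lipschitz in $d(y,y^*)$'' bounds for operator-valued quantities must be interpreted in the parallel-transported operator norm as defined in Section 2 and Appendix A. Once these transports are made explicit and the uniform bounds on the three factors are collected, summing the two $d(y, y^*)$ pieces from the first bracket and the second sub-error yields the coefficient $L_1 = L_f + \frac{l_f \rho}{\mu} + L_g\bigl(\frac{L_f}{\mu} + \frac{l_f \rho}{\mu^2}\bigr)$, and the $L_g\|v - v^*\|_y$ piece stands alone, completing the bound.
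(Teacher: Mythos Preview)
Your proposal is correct and follows essentially the same strategy as the paper's proof: both split the error into the $\gG_x f$ piece and the $\gG^2_{xy} g[v]$ piece, isolate the $L_g\|v - v^*\|$ term, and reduce the rest to a product-of-three telescoping yielding exactly the constants in $L_1$. The only cosmetic difference is ordering: the paper first inserts $v^{**}$ (so the cross-derivative Lipschitz term $\rho l_f/\mu$ appears immediately) and then invokes the separately-proved Lemma on $\|\hat v^*(x,y_1) - \gP\hat v^*(x,y_2)\|$ for the remaining $BC$-difference, whereas you insert $v^*$ first and carry out the full $ABC$ telescoping directly---both routes produce the same three contributions, and your anticipation of the parallel-transport bookkeeping is exactly right.
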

Lemma \ref{lem:hgerror} establishes that the error between the approximate and exact Riemannian hypergradients is bounded by the errors arising from the resolutions of the lower-level problem in Problem \eqref{p:primal} and the linear system \eqref{equ:defR}. Consequently, we can actually integrate the stopping criterion $\|\widehat \gG F(x_t, y_t^{K_t}, v_t^{N_t})\|_{x_t}^2 \le 1/T$ into Algorithm \ref{alg:AdaRHD} to enable early termination. By Lemma \ref{lem:hgerror}, this criterion ensures that $\|\gG F(x_t)\|_{x_t}^2 \le \mathcal{O}({1}/{T})$, which implies that $x_t$ is an $\mathcal{O}(1/T)$-stationary point of Problem \eqref{p:primal}. Moreover, integrating this criterion does not increase the overall computational complexity of Algorithm \ref{alg:AdaRHD}, since the norm of $\widehat \gG F(x_t, y_t^{K_t}, v_t^{N_t})$ is already computed at Step \ref{alg:AdaRHD updatea}.

Similar to Proposition 2 in \cite{yang2024tuning}, we derive upper bounds on the total number of iterations required to solve both the lower-level problem in Problem \eqref{p:primal} and the linear system \eqref{equ:defR}. However, owing to the geometric structures inherent in Riemannian manifolds, these bounds cannot be directly inferred from \cite[Proposition 2]{yang2024tuning} and require a meticulous analysis.
\begin{proposition}
\label{prop:KtNt}
Suppose that Assumptions \ref{ass:curvature}, \ref{ass:str}, \ref{ass:lips}, and \ref{ass:inf} hold. Then, for any $0\le t \le T$, the numbers of iterations $K_t$ and $N_t$ required in Algorithm \ref{alg:AdaRHD} satisfy:
\begin{equation*}
K_t \le \frac{\log(C_b^2/ b_0^2)}{\log(1+\epsilon_y/C_b^2)} + \frac{1}{({\mu}/{\bar{b}} - \zeta {L_{g}^2}/{ \bar{b}^2})}\log\left(\frac{\tilde{b}}{\epsilon_y}\right).
\end{equation*}
AdaRHD-GD:
\begin{equation*}
N_t \le \frac{\log(C_c^2/ c_0^2)}{\log(1+\epsilon_v/C_c^2)} + \frac{ \bar{c}}{\mu}\log\left(\frac{\tilde{c}}{\epsilon_v}\right),
\end{equation*}
where $C_b, C_c$, $ \bar{b}$, $ \bar{c}$, $\tilde{b}$, and $\tilde{c}$ are constants defined in Appendix \ref{sec:appe:proofs of arghd}.

AdaRHD-CG:
\begin{equation*}
N_t \le \frac{1}{2}{ \log\left( \frac{4 L_g^3 l_f^2}{\mu^3 \epsilon_v} \right) }/{ \log\left( \frac{ \sqrt{L_{g}/\mu} + 1 }{ \sqrt{L_{g}/\mu} - 1 } \right) }.
\end{equation*}
\end{proposition}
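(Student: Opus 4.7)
The plan is to handle the bounds on $K_t$ and on $N_t$ under AdaRHD-GD by a common two-phase template inspired by the AdaGrad-Norm analysis of \cite{xie2020linear,yang2024tuning}, while the AdaRHD-CG bound follows from classical conjugate-gradient theory applied to a Euclidean quadratic. The key conceptual point is that, although the step sizes $1/b_{k+1}$ and $1/c_{n+1}$ are data-driven and a priori could be arbitrarily large at initialization, the cumulative quantities $b_k$ and $c_n$ can be shown to stay bounded above by absolute constants $\bar b$ and $\bar c$. Once this is established, the step sizes are small enough that geodesic strong convexity drives linear contraction of the distance to the solution, while standard AdaGrad-Norm telescoping controls the "warm-up" phase during which the adaptive denominator is still calibrating.

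For $K_t$, I would first derive the iteration-independent bound $b_k\le\bar b$. Applying the Riemannian descent lemma from Proposition \ref{prop:lsmoothmustrong} to the update $y_t^{k+1}=\Exp_{y_t^k}(-\gG_y g(x_t,y_t^k)/b_{k+1})$ yields
\[
g(x_t,y_t^{k+1})\le g(x_t,y_t^k)-\frac{\|\gG_y g(x_t,y_t^k)\|_{y_t^k}^2}{b_{k+1}}+\frac{L_g\,\|\gG_y g(x_t,y_t^k)\|_{y_t^k}^2}{2b_{k+1}^2},
\]
and combining with the elementary inequalities $\|\gG_y g\|^2/b_{k+1}\ge b_{k+1}-b_k$ and $\|\gG_y g\|^2/b_{k+1}^2\le\log(b_{k+1}^2/b_k^2)$ and then telescoping, one obtains $b_{K_t}-b_0\le (g(x_t,y_t^0)-\inf g)+L_g\log(b_{K_t}/b_0)$, which implicitly defines $\bar b$. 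Next, I would split the iterations into a warm-up phase while $b_k^2\le C_b^2$ and a contraction phase afterwards. In the warm-up phase the non-termination condition $\|\gG_y g\|_{y_t^k}^2>\epsilon_y$ forces $b_{k+1}^2\ge b_k^2(1+\epsilon_y/C_b^2)$, so this phase lasts at most $\log(C_b^2/b_0^2)/\log(1+\epsilon_y/C_b^2)$ steps. In the contraction phase, applying the trigonometric distance bound of Lemma \ref{lem:trig} to the geodesic triangle $(y_t^k,y_t^{k+1},y^*(x_t))$ together with the strong-convexity inequality \eqref{equ:strongconv} and Lipschitz smoothness of $\gG_y g$ produces
\[
d^2(y_t^{k+1},y^*(x_t))\le\left(1-\mu/b_{k+1}+\zeta L_g^2/b_{k+1}^2\right)d^2(y_t^k,y^*(x_t));
\]
with $b_{k+1}\in[C_b,\bar b]$ (and $C_b$ chosen large enough that the bracket is strictly less than one) this delivers linear contraction at rate $1-(\mu/\bar b-\zeta L_g^2/\bar b^2)$, and translating back through $\|\gG_y g\|^2\le L_g^2 d^2$ and the stopping tolerance $\epsilon_y$ gives the second summand in the stated bound.

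The bound on $N_t$ for AdaRHD-GD is entirely analogous, but the iterates $v_t^n$ live in the fixed tangent space $T_{y_t^{K_t}}\gM_y$, which is Euclidean, so no trigonometric correction is needed and the contraction rate collapses cleanly to $\mu/\bar c$; the uniform bound $\bar c$ is obtained from the same descent-lemma plus telescoping argument applied to $R(x_t,y_t^{K_t},\cdot)$, which is $\mu$-strongly convex and $L_g$-smooth. For $N_t$ under AdaRHD-CG, I would invoke the classical Chebyshev-polynomial bound for linear conjugate gradient on a strongly convex quadratic whose Hessian $\gH_y g(x_t,y_t^{K_t})$ has spectrum in $[\mu,L_g]$, yielding per-iteration contraction by $(\sqrt{\kappa}-1)/(\sqrt{\kappa}+1)$ with $\kappa=L_g/\mu$. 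Bounding the initial energy-norm error using $\|\hat v_t^*(x_t,y_t^{K_t})\|_{y_t^{K_t}}\le l_f/\mu$ (from strong convexity of $R$ and the Lipschitz bound on $\gG_y f$) and relating $\|\nabla_v R\|^2\le\epsilon_v$ to the energy-norm error via $L_g$-smoothness of $R$ produces the displayed expression after taking logarithms.

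The hard part is the contraction-phase argument for $K_t$: it requires coordinating the choice of the threshold $C_b$ with the a priori upper bound $\bar b$ so that the curvature-corrected contraction factor $\mu/\bar b-\zeta L_g^2/\bar b^2$ is strictly positive. This balancing is where the Hadamard curvature lower bound $\tau$ (through $\zeta$) enters and is the principal way in which the Riemannian analysis departs from the Euclidean result \cite[Proposition 2]{yang2024tuning}. Once the constants $\bar b,C_b,\tilde b$ and their counterparts $\bar c,C_c,\tilde c$ are fixed, the remaining steps are routine telescoping estimates and logarithmic-growth bookkeeping.
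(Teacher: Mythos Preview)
Your proposal is correct and follows the same two-phase template as the paper (warm-up while $b_k\le C_b$, then linear contraction once $b_k>C_b$); the treatments of $N_t$ for both GD and CG also match. The one genuine methodological difference is in how you obtain the uniform bound $\bar b$ on $b_k$: the paper does \emph{not} telescope the descent lemma. Instead, after the threshold index $k_1$ it proves a Riemannian co-coercivity inequality (their Lemma~\ref{lem:co-coer}, namely $\langle \gG_y g(x_t,y_t^k),\Exp^{-1}_{y_t^k}(y^*(x_t))\rangle\le -\|\gG_y g(x_t,y_t^k)\|^2/(2L_g)$) and feeds it into the trigonometric distance bound to control $\sum_{k\ge k_1}\|\gG_y g\|^2/b_{k+1}$ directly in terms of $d^2(y_t^{k_1},y^*(x_t))$. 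Your descent-lemma route is equally valid and arguably cleaner, as it avoids introducing the co-coercivity lemma and handles all iterations at once rather than only $k\ge k_1$.

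One step you should make explicit: both routes need a uniform-in-$t$ bound on the initial condition---for you this is $g(x_t,y_t^0)-g^*(x_t)$, for the paper it is $d^2(y_t^{k_1},y^*(x_t))$. This relies on the warm start $y_t^0=y_{t-1}^{K_{t-1}}$, the lower-level stopping criterion $\|\gG_y g(x_{t-1},y_{t-1}^{K_{t-1}})\|^2\le\epsilon_y$, Lipschitzness of $y^*(\cdot)$ (Lemma~\ref{lem:basicy*GF}\eqref{lem:basicy*GFy*x}), and the bound $d(x_{t-1},x_t)\le C_f/a_0$ from Lemma~\ref{lem:boundapprhyperg}. The paper spells this out in its equation (F.10); your proposal implicitly assumes it but does not mention the warm start at all, so as written the constant $\bar b$ you describe would be $t$-dependent.
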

\begin{remark}
Proposition \ref{prop:KtNt} provides upper bounds for the total iterations of the resolutions of the lower-level problem in Problem \eqref{p:primal} and the linear system \eqref{equ:defR}. Since $1/\log(1+\epsilon_y)$ is of the same order as $1/\epsilon_y$, it holds that $K_t = \mathcal{O}(1/\epsilon_y)$, which matches the standard AdaGrad-Norm for solving strongly convex problems \cite{xie2020linear}. Additionally, we also have $N_t = \mathcal{O}(1/\epsilon_v)$ and $N_t = \mathcal{O}(\log(1/\epsilon_v))$ when employing gradient descent and conjugate gradient methods, respectively. Moreover, similar to AdaGrad-Norm \cite{xie2020linear}, the step size adaptation for the lower-level problem proceeds in two stages: Stage 1 requires at most $\mathcal{O}(1/\log(1+\epsilon_y))$ iterations, while Stage 2 requires at most $\mathcal{O}(\log(1/\epsilon_y))$ iterations. Furthermore, due to geometric properties of Riemannian manifolds, these bounds cannot be directly derived from \cite{yang2024tuning} and introduce additional technical challenges. Specifically, the constants $C_b, C_c$, $ \bar{b}$, $ \bar{c}$, $\tilde{b}$, and $\tilde{c}$ are all related to curvature $\zeta$, and increase as $\zeta$ increases.
\end{remark}

Based on Proposition \ref{prop:KtNt}, we have the following convergence result of Algorithm \ref{alg:AdaRHD}.
\begin{theorem}
\label{thm:conv}
Suppose that Assumptions \ref{ass:curvature}, \ref{ass:str}, \ref{ass:lips}, and \ref{ass:inf} hold. The sequence $\{x_t\}_{t=0}^T$ generated by Algorithm \ref{alg:AdaRHD} satisfies, 
\[
\frac{1}{T}\sum_{t=0}^{T-1}\left\| \gG F(x_t) \right\|^2_{x_t} \le \frac{C}{T} = \mathcal{O}\left(\frac{1}{T}\right),
\]
where $C$ is a constant defined in Appendix \ref{sec:appe:proofs of arghd}.
\end{theorem}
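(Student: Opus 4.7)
The plan is to combine the geodesic descent inequality for the hyperobjective $F$ with an AdaGrad-norm style accumulation argument, treating the inexactness of the Riemannian hypergradient as a controlled perturbation governed by the stopping tolerances $\epsilon_y = \epsilon_v = 1/T$. I would first verify that $F(x) = f(x, y^*(x))$ is geodesically $L_F$-smooth on $\gU_x$ for some $L_F$ expressible in terms of $\mu, l_f, L_f, L_g, \rho$, which is the direct Riemannian analog of the bilevel smoothness bound underlying Lemma \ref{lem:hgerror}. Applying Proposition \ref{prop:lsmoothmustrong} to $F$ along the update $x_{t+1} = \Exp_{x_t}(-\frac{1}{a_{t+1}} \widehat{\gG} F_t)$, with $\widehat{\gG} F_t := \widehat{\gG} F(x_t, y_t^{K_t}, v_t^{N_t})$, yields the per-step descent
\[
F(x_{t+1}) \le F(x_t) - \frac{1}{a_{t+1}} \langle \gG F(x_t), \widehat{\gG} F_t \rangle_{x_t} + \frac{L_F}{2 a_{t+1}^2} \| \widehat{\gG} F_t \|_{x_t}^2.
\]

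Next, I would split the inner product as $\langle \gG F(x_t), \widehat{\gG} F_t \rangle = \| \widehat{\gG} F_t \|^2 + \langle \gG F(x_t) - \widehat{\gG} F_t, \widehat{\gG} F_t \rangle$ and apply Young's inequality together with Lemma \ref{lem:hgerror}. The stopping criteria give $\|\gG_y g(x_t, y_t^{K_t})\|^2 \le 1/T$ and $\|\nabla_v R(x_t, y_t^{K_t}, v_t^{N_t})\|^2 \le 1/T$; combining these with the $\mu$-geodesic strong convexity of $g$ in $y$ and of $R$ in $v$ (since $\gH_y g \succeq \mu\,\mathrm{Id}$) yields $d(y_t^{K_t}, y^*(x_t)) \le 1/(\mu\sqrt{T})$ and $\|v_t^{N_t} - \hat v_t^*(x_t, y_t^{K_t})\|_{y_t^{K_t}} \le 1/(\mu\sqrt{T})$, so Lemma \ref{lem:hgerror} gives $\|\widehat{\gG} F_t - \gG F(x_t)\|_{x_t}^2 \le \mathcal{O}(1/T)$.

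Summing the descent inequality from $t=0$ to $T-1$, the AdaGrad-norm update yields the familiar estimates
\[
\sum_{t=0}^{T-1} \frac{\| \widehat{\gG} F_t \|^2}{a_{t+1}^2} \le 2 \log(a_T/a_0), \qquad \sum_{t=0}^{T-1} \frac{\| \widehat{\gG} F_t \|^2}{a_{t+1}} \ge a_T - a_0.
\]
Combined with the telescoped bound $F(x_0)-F^*$ and the $\mathcal{O}(1/T)$ per-iteration error term, these force $a_T$ to grow at most polylogarithmically in $T$. Since $\|\widehat{\gG} F_t\|$ is uniformly bounded by Assumption \ref{ass:lips}, the identity $\sum_t \| \widehat{\gG} F_t \|^2 = a_T^2 - a_0^2$ is then bounded by a constant, and the triangle inequality $\|\gG F(x_t)\|^2 \le 2\|\widehat{\gG} F_t\|^2 + 2\|\gG F(x_t) - \widehat{\gG} F_t\|^2$ gives $\sum_t \|\gG F(x_t)\|_{x_t}^2 \le C$ for a constant $C$ absorbing the polylog factors, yielding the claimed $\mathcal{O}(1/T)$ rate after dividing by $T$.

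The main obstacle is the intertwined control of $a_T$ and the error sequence: unlike the non-adaptive setting where the step size is a fixed constant, here $a_T$ itself depends on the trajectory of hypergradient norms, while the Riemannian curvature enters both through the smoothness inequality on $\gM_x$ and through the lower-level convergence rate from Proposition \ref{prop:KtNt}. The choice $\epsilon_y=\epsilon_v=1/T$ is essential so that the $T$ accumulated error contributions sum to $\mathcal{O}(1)$ rather than diverging, and a two-stage argument analogous to the burn-in/linear-convergence split of AdaGrad-Norm must be leveraged to prevent the initial iterations, where $1/a_{t+1}$ can be large, from inflating $a_T$ beyond polylogarithmic order.
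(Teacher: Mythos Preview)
Your approach is essentially correct and takes a somewhat different route from the paper. The paper uses the polarization identity $\langle a,b\rangle = \tfrac{1}{2}(\|a\|^2+\|b\|^2-\|a-b\|^2)$ rather than your additive split, and bounds $a_T$ via an explicit two-stage threshold argument (Proposition~\ref{prop:TKN} and Lemma~\ref{lem:at}): once $a_t$ exceeds $C_a := \max\{2L_F, a_0\}$, the coefficient $(1-L_F/a_{t+1})$ is at least $1/2$ and the descent is clean; the iterations before this threshold contribute only a constant since $\sum_{t<t_1}\|\widehat\gG F_t\|^2 \le a_{t_1}^2 \le C_a^2$. This yields the explicit bound $a_T \le C_a + 2F_0 + 2T\epsilon_{y,v}/a_0 = O(1)$, after which the paper multiplies the summed descent inequality $\sum_t \|\gG F(x_t)\|^2/a_{t+1} \le F_0 + T\epsilon_{y,v}/a_0$ through by $a_T$. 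Your route via the AdaGrad log-sum estimate and the implicit inequality $a_T - a_0 \le 2(F(x_0)-F^*) + O(1) + 2L_F\log(a_T/a_0)$ is more compact, but note that this inequality already forces $a_T$ to be bounded by a \emph{constant} independent of $T$, not merely polylogarithmic as you wrote. The distinction matters: if $a_T$ were only polylog in $T$, your $C$ would carry $\log T$ factors and you would obtain $\tilde{\mathcal{O}}(1/T)$ rather than the claimed $\mathcal{O}(1/T)$. Tighten that step to conclude $a_T=O(1)$, drop the unused appeal to the uniform bound on $\|\widehat{\gG}F_t\|$, and your argument delivers the same sharp rate with slightly leaner bookkeeping than the paper's explicit case split.
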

Theorem \ref{thm:conv} demonstrates that our proposed adaptive algorithm achieves a convergence rate matching standard non-adaptive (Riemannian) bilevel algorithms (e.g., \cite{ghadimi2018approximation,ji2021bilevel,ji2022will,han2024framework,li2025riemannian}), demonstrating its computational efficiency. Moreover, although Algorithm \ref{alg:AdaRHD} shares a similar algorithmic structure with D-TFBO \cite{yang2024tuning} and some Euclidean analyses extend naturally to Riemannian settings, its convergence analysis still presents significant theoretical challenges. For example, the geometric structure of Riemannian manifolds necessitates the use of the trigonometric distance bound (Lemma \ref{lem:trig}) instead of its Euclidean counterpart and requires incorporating the curvature constant into the step size adaptation (cf. Proposition \ref{prop:KtNt}), which substantially increases the analytical difficulty. We subsequently derive the complexity bound for Algorithm \ref{alg:AdaRHD}.

\begin{corollary}
\label{coro:thm:conv}
Suppose that Assumptions \ref{ass:curvature}, \ref{ass:str}, \ref{ass:lips}, and \ref{ass:inf} hold. Algorithm \ref{alg:AdaRHD} needs $T=\mathcal{O}(1/\epsilon)$ iterations to achieve an $\epsilon$-accurate stationary point of Problem \eqref{p:primal}. The gradient complexities of $f$ and $g$ are $G_f = \mathcal{O}(1/\epsilon)$ and $G_g = \mathcal{O}(1/\epsilon^2)$, respectively. The complexities of computing the second-order cross derivative and Hessian-vector product of $g$ are $JV_g = \mathcal{O}(1/\epsilon)$, $HV_g = \mathcal{O}(1/\epsilon^2)$ for AdaRHD-GD, and $HV_g = \tilde{\mathcal{O}}(1/\epsilon)$ for AdaRHD-CG, respectively.
\end{corollary}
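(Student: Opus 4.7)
The plan is to derive Corollary~\ref{coro:thm:conv} as a direct bookkeeping consequence of Theorem~\ref{thm:conv} together with Proposition~\ref{prop:KtNt}, using the prescribed choice $\epsilon_y = \epsilon_v = 1/T$. First, I would invoke Theorem~\ref{thm:conv}: since $\frac{1}{T}\sum_{t=0}^{T-1}\|\gG F(x_t)\|_{x_t}^2 \le C/T$, one has $\min_{0\le t\le T-1}\|\gG F(x_t)\|_{x_t}^2 \le C/T$, and choosing $T = \lceil C/\epsilon\rceil$ already produces an $\epsilon$-stationary point. This gives $T = \mathcal{O}(1/\epsilon)$, and consequently $\epsilon_y = \epsilon_v = \Theta(\epsilon)$.

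Next, I would enumerate the per-outer-iteration cost by inspecting Algorithm~\ref{alg:AdaRHD}. At iteration $t$, forming $\widehat\gG F(x_t, y_t^{K_t}, v_t^{N_t})$ costs one evaluation each of $\gG_x f$, $\gG_y f$, and $\gG^2_{xy} g$; the lower-level while-loop performs $K_t$ evaluations of $\gG_y g$; and the linear-system loop performs $N_t$ evaluations of $\gH_y g[\cdot]$ (with $\gG_y f(x_t, y_t^{K_t})$ in $\nabla_v R$ computed once and reused). Summing over $t = 0,\dots,T-1$ immediately gives $G_f = \mathcal{O}(T)$, $JV_g = \mathcal{O}(T)$, $G_g = \mathcal{O}\bigl(\sum_t K_t\bigr)$, and $HV_g = \mathcal{O}\bigl(\sum_t N_t\bigr)$.

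The final step is to insert the iteration counts from Proposition~\ref{prop:KtNt} under $\epsilon_y = \epsilon_v = 1/T$. Since $1/\log(1 + \epsilon_y/C_b^2) = \mathcal{O}(1/\epsilon_y) = \mathcal{O}(T)$ dominates the $\log(\tilde{b}/\epsilon_y) = \mathcal{O}(\log T)$ contribution, one obtains $K_t = \mathcal{O}(T)$ and hence $G_g = \mathcal{O}(T^2) = \mathcal{O}(1/\epsilon^2)$. For AdaRHD-GD the same argument yields $N_t = \mathcal{O}(T)$, so $HV_g = \mathcal{O}(T^2) = \mathcal{O}(1/\epsilon^2)$; for AdaRHD-CG the log-bound $N_t = \mathcal{O}(\log(1/\epsilon_v)) = \mathcal{O}(\log T)$ yields $HV_g = \mathcal{O}(T\log T) = \tilde{\mathcal{O}}(1/\epsilon)$. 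Combined with $T = \mathcal{O}(1/\epsilon)$ this recovers all four complexity bounds claimed in the corollary.

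The proof is essentially a counting exercise, so no deep obstacle is expected; the main care is to verify two items correctly. First, under $\epsilon_y = 1/T$ the ``Stage~1'' term $1/\log(1+\epsilon_y/C_b^2)$ dominates the ``Stage~2'' logarithmic term in Proposition~\ref{prop:KtNt}, so the worst case $K_t = \mathcal{O}(T)$ really is tight, and analogously for $N_t$ in the gradient-descent case. Second, the inner loops should be accounted so that each pass consumes exactly one $\gG_y g$ or one $\gH_y g[\cdot]$ call, with $\gG_y f$, $\gG_x f$, and $\gG^2_{xy} g$ counted once per outer iteration rather than once per inner step; these are the only places where a miscount could shift the final exponents.
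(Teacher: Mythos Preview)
Your proposal is correct and follows essentially the same approach as the paper's proof: invoke Theorem~\ref{thm:conv} to get $T=\mathcal{O}(1/\epsilon)$, then use Proposition~\ref{prop:KtNt} with $\epsilon_y=\epsilon_v=1/T$ to obtain $K_t=\mathcal{O}(1/\epsilon)$ and $N_t=\mathcal{O}(1/\epsilon)$ (or $\mathcal{O}(\log(1/\epsilon))$ for CG), and multiply through. Your write-up is in fact more explicit than the paper's, which simply asserts the per-iteration accounting as ``evident'' without spelling out the one-gradient-per-inner-step bookkeeping you carefully itemize.
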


Corollary \ref{coro:thm:conv} demonstrates that the gradient complexity ($G_g$) and Hessian-vector product complexity ($HV_g$) of AdaRHD-GD surpass those of non-adaptive Riemannian bilevel optimization methods \cite{han2024framework,li2025riemannian} by a factor of $1/\epsilon$. This gap originates from the additional iterations needed to guarantee the solution of the lower-level problem in Problem \eqref{p:primal} and linear system \ref{equ:defR}, which is necessitated by the lack of prior knowledge regarding the strong convexity, Lipschitzness, and curvature constants.

Designing algorithms to eliminate the $1/\epsilon$-gap is an interesting future direction. Potential strategies to bridge this gap could be drawn from adaptive methods in Riemannian optimization, such as sharpness-aware minimization on Riemannian manifolds (Riemannian SAM) \citep{yun2023riemannian}, Riemannian natural gradient descent (RNGD) \citep{hu2024riemannian}, and the framework for Riemannian adaptive optimization \citep{sakai2024general}. Riemannian extensions of Euclidean approaches like adaptive accelerated gradient descent \citep{malitsky2019adaptive}, adaptive proximal gradient method (adaPGM) \citep{latafat2024adaptive}, adaptive Barzilai-Borwein algorithm (AdaBB) \citep{zhou2025adabb}, and adaptive Nesterov accelerated gradient (AdaNAG) \citep{suh2025adaptive} may also be viable alternatives to the inverse of cumulative gradient norms strategy \citep{xie2020linear} employed in Algorithm \ref{alg:AdaRHD}.

\subsection{Extend to retraction mapping}
\label{sec:retr}
Given that retraction mapping is often preferred in practice for its computational efficiency over exponential mapping, this section demonstrates that its incorporation into Algorithm \ref{alg:AdaRHD} achieves comparable theoretical convergence guarantees. A modified version of Algorithm \ref{alg:AdaRHD}, incorporating retraction mapping, is presented in Algorithm \ref{alg:retr} (cf. Appendix \ref{sec:addi of retr}). To formalize this extension, the second condition of Assumption \ref{ass:curvature} must be modified as follows.
\begin{assumption}
\label{ass:bar D}
All the iterates of the lower-level problem generated by Algorithm \ref{alg:retr} lie in a bounded set belonging to $\gM_y$ that contains the optimal solution, i.e., there exists a constant $\bar{D} > 0$ such that $d(y_t^k, y^*(x_t)) \le \bar{D}$ holds for all $t \ge 0$ and $k \ge 0$.
\end{assumption}
Under Assumption \ref{ass:bar D}, the curvature constant $\zeta(\tau, d(y_t^k, y^*(x_t)))$ in Assumption \ref{ass:curvature} is bounded by its definition, thereby ensuring consistency with Assumption \ref{ass:curvature}. Additionally, it is necessary to consider the error between the exponential and retraction mappings.
\begin{assumption}
\label{ass:retrerror}
Given $z_1\in\gU \subseteq \gM$ (here $\gM$ can be $\gM_x$ or $\gM_y$) and $u\in T_x\gM$, let $z_2 = \Retr_{z_1}(u)$. There exist constants $ c_u \ge 1, c_R \ge 0$  such that $d^2(z_1, z_2) \le c_u \| u\|_{z_1}^2$ and $\| \Exp^{-1}_{z_1}(z_2) - u \|_{z_1} \le c_R \| u \|_{z_1}^2$.
\end{assumption}
Assumption \ref{ass:retrerror} is standard (e.g., \cite[Assumption 2]{han2024framework} and \cite[Assumption 1]{kasai2018riemannian}) in bounding the error between the exponential mapping and retraction mapping, given that retraction mapping is a first-order approximation to the exponential mapping \citep{han2024framework}.

Then, similar to Theorem \ref{thm:conv}, we have the following convergence result of Algorithm \ref{alg:retr}.
\begin{theorem}
\label{thm:conv:retr}
Suppose that Assumptions \ref{ass:curvature}, \ref{ass:str}, \ref{ass:lips}, \ref{ass:inf}, \ref{ass:bar D}, and \ref{ass:retrerror} hold. The sequence $\{x_t\}_{t=0}^T$ generated by Algorithm \ref{alg:retr} satisfies,
\[
\frac{1}{T}\sum_{t=0}^{T-1}\left\| \gG F(x_t) \right\|^2_{x_t} \le \frac{C_{{\rm retr}}}{T} = \mathcal{O}\left(\frac{1}{T}\right),
\]
where $C_{{\rm retr}}$ is a constant defined in Appendix \ref{sec:appe:proofs of retr}.
\end{theorem}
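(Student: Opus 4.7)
The plan is to follow the blueprint of the proof of Theorem~\ref{thm:conv}, i.e., combine a descent-type inequality for the upper-level iterates with an accumulated-gradient bound and the hypergradient approximation error of Lemma~\ref{lem:hgerror}, and to absorb the extra error introduced by using retractions into new constants via Assumption~\ref{ass:retrerror}. The first stage is to re-derive Lemma~\ref{lem:hgerror} and Proposition~\ref{prop:KtNt} under Algorithm~\ref{alg:retr}. For the lower-level and linear-system solves, the iterates live in a bounded region by Assumption~\ref{ass:bar D}, so the curvature bound $\zeta$ of Assumption~\ref{ass:curvature} still applies. The only structural change is that each step replaces $\Exp$ with $\Retr$, which by Assumption~\ref{ass:retrerror} introduces a residual of order $c_R\|u\|^2$ relative to the exponential update and a distance bounded by $c_u\|u\|^2$. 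I would carry these through the standard strongly-convex one-step recursions (adapted from the AdaGrad-Norm analysis of \cite{xie2020linear} used in Proposition~\ref{prop:KtNt}) to obtain the same $\mathcal{O}(1/\epsilon_y)$ and $\mathcal{O}(1/\epsilon_v)$ (or $\tilde{\mathcal{O}}(1/\epsilon_v)$ for CG) bounds, only with constants inflated by factors depending on $c_u, c_R, \bar D$.

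The second stage is the upper-level descent. Using Proposition~\ref{prop:lsmoothmustrong} applied to $F$ (which is $L_F$-geodesic Lipschitz smooth under Assumptions~\ref{ass:str} and \ref{ass:lips}) together with Assumption~\ref{ass:retrerror}, I would expand
\[
F(x_{t+1}) \le F(x_t) + \bigl\langle \gG F(x_t), \Exp^{-1}_{x_t}(x_{t+1})\bigr\rangle_{x_t} + \tfrac{L_F}{2} d^2(x_t, x_{t+1}),
\]
substitute $\Exp^{-1}_{x_t}(x_{t+1}) = -\tfrac{1}{a_{t+1}}\widehat{\gG}F(x_t,y_t^{K_t},v_t^{N_t}) + r_t$ with $\|r_t\|_{x_t} \le c_R a_{t+1}^{-2}\|\widehat{\gG}F\|_{x_t}^2$, and use $d^2(x_t,x_{t+1}) \le c_u a_{t+1}^{-2}\|\widehat{\gG}F\|_{x_t}^2$. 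Splitting $\langle \gG F(x_t), -\tfrac{1}{a_{t+1}}\widehat{\gG}F\rangle_{x_t}$ as in the Euclidean analysis of \cite{yang2024tuning}, Cauchy--Schwarz on the error term $r_t$, and the hypergradient error bound of Lemma~\ref{lem:hgerror} would give, after rearrangement,
\[
\tfrac{1}{a_{t+1}} \|\gG F(x_t)\|_{x_t}^2 \le 2\bigl(F(x_t)-F(x_{t+1})\bigr) + \tfrac{C'}{a_{t+1}^2}\|\widehat{\gG}F\|_{x_t}^2 + E_t,
\]
where $E_t$ packages the lower-level and linear-system errors (which are bounded by $\epsilon_y,\epsilon_v = 1/T$ at termination) and $C'$ collects $L_F$, $c_u$, $c_R$ and Lipschitz constants. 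The extra $c_R$-induced cubic term $a_{t+1}^{-3}\|\widehat{\gG}F\|^3$ is dominated by the quadratic term provided the step size $1/a_{t+1}$ stays bounded, which follows from the monotone growth of $a_{t+1}$ together with the a priori bound on the hypergradient norm established via Lemma~\ref{lem:hgerror} and Assumption~\ref{ass:lips}.

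The third stage is to sum over $t=0,\dots,T-1$, telescoping $F(x_t) - F(x_{t+1})$ against the lower bound in Assumption~\ref{ass:inf} and applying the standard AdaGrad-Norm inequality
\[
\sum_{t=0}^{T-1} \tfrac{\|\widehat{\gG}F(x_t,\cdot,\cdot)\|_{x_t}^2}{a_{t+1}^2} \le 2\log(a_T/a_0),
\]
to control the $C'$-term. Using that the accumulator $a_T$ grows at most polynomially in $T$ (since each $\|\widehat{\gG}F\|$ is uniformly bounded) yields $\sum_{t=0}^{T-1}\|\gG F(x_t)\|^2/a_{t+1} \le \mathrm{const}$, and combining this with the upper bound $a_{t+1} \le a_T = \mathcal{O}(\sqrt{T})$ delivers the desired $\mathcal{O}(1/T)$ rate. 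The constant $C_{\text{retr}}$ then equals $C$ augmented by additive terms proportional to $c_u$, $c_R$ and $\bar D$.

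The main obstacle is treating the cubic-in-gradient term produced by $r_t$ and the inequality $d^2(x_t,x_{t+1}) \le c_u\|u\|_{x_t}^2$ (rather than equality, as for the exponential map). Unlike in the exponential-mapping analysis of Theorem~\ref{thm:conv}, this term is not automatically of descent order. I would handle it by first establishing a uniform a priori bound $\|\widehat{\gG}F(x_t,y_t^{K_t},v_t^{N_t})\|_{x_t} \le M$ from Assumption~\ref{ass:lips}, Lemma~\ref{lem:hgerror}, and the boundedness of the lower-level iterates (Assumption~\ref{ass:bar D}), so that $a_{t+1}^{-3}\|\widehat{\gG}F\|^3 \le M\cdot a_{t+1}^{-3}\|\widehat{\gG}F\|^2$, which is summable by the same AdaGrad-Norm telescoping argument. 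With this step in place, the remainder of the derivation parallels the proof of Theorem~\ref{thm:conv} verbatim.
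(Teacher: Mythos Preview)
Your overall strategy is aligned with the paper's: both derive a one-step descent inequality absorbing the retraction error, invoke the hypergradient-error bound (Lemma~\ref{lem:hgerror}), and then telescope. Your handling of the extra $r_t$ term differs slightly from the paper---the paper bounds $\|\gG F(x_t)\|_{x_t}$ directly by the uniform constant $l_f(1+L_g/\mu)$ from Lemma~\ref{lem:basicy*GF}\,(\ref{lem:basicy*GFGFx}), so that $\langle\gG F(x_t), r_t\rangle \le c_R\,l_f(1+L_g/\mu)\,a_{t+1}^{-2}\|\widehat\gG F\|^2$ is already quadratic and can be folded into a modified Lipschitz constant $\bar L_F = L_F c_u + 2c_R l_f(1+L_g/\mu)$ playing the role of $L_F$---but your route via a uniform bound on $\|\widehat\gG F\|$ reaches essentially the same place. (Your term $a_{t+1}^{-3}\|\widehat\gG F\|^3$ should read $a_{t+1}^{-2}\|\widehat\gG F\|^3$, but this does not affect the argument.)

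The genuine gap is in your third stage. Using the generic bound $a_T = \mathcal O(\sqrt T)$ (from bounded $\|\widehat\gG F\|$) together with $\sum_t \|\gG F(x_t)\|^2/a_{t+1} \le \text{const}+\mathcal O(\log T)$ only yields
\[
\frac{1}{T}\sum_{t=0}^{T-1}\|\gG F(x_t)\|_{x_t}^2 \;\le\; \frac{a_T}{T}\bigl(\text{const}+\mathcal O(\log T)\bigr) \;=\; \mathcal O\!\Big(\frac{\log T}{\sqrt{T}}\Big),
\]
not $\mathcal O(1/T)$. The paper closes this gap by proving that $a_T = \mathcal O(1)$, not $\mathcal O(\sqrt T)$: this is precisely the content of Lemma~\ref{lem:at} (and its retraction analogue Lemma~\ref{lem:at:retr}). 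The mechanism is the two-phase split of Proposition~\ref{prop:TKN:retr}: once $a_{t+1}>C_a\ge 2\bar L_F$, the descent inequality \eqref{equ:lem:impro onestepmain2:retr} has a \emph{negative} coefficient on $\|\widehat\gG F\|^2/a_{t+1}$, so summing from $t_1$ to $t$ and telescoping $F$ gives $\sum_{i=t_1}^{t}\|\widehat\gG F\|^2/a_{i+1} \le 4(F(x_{t_1})-F^*)+2(t+1)\epsilon_{y,v}/a_0$. Since $a_{t+1}-a_{t_1}\le \sum_{i\ge t_1}\|\widehat\gG F\|^2/a_{i+1}$ and $\epsilon_{y,v}=\mathcal O(1/T)$, this yields $a_T \le C_a + 2F_0 + \mathcal O(1)$, a constant independent of $T$. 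Plugging this constant bound on $a_T$ into the telescoped inequality is what delivers $\mathcal O(1/T)$; without it the argument stalls at the slower rate.
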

Algorithm \ref{alg:retr} achieves a convergence rate nearly identical to that of Algorithm \ref{alg:AdaRHD} while being more practical to implement. Furthermore, the complexity analysis of Algorithm \ref{alg:retr} closely aligns with that of Algorithm \ref{alg:AdaRHD} and is, therefore, omitted here.

\section{Experimental results}
\label{sec:exper}
In this section, adhering to the experimental framework established by \cite{han2024framework}, we conduct comprehensive experiments to evaluate our algorithm against the RHGD method. Since the RieBO algorithm proposed by \cite{li2025riemannian} shares the same algorithmic framework as RHGD, we categorize them as a single method for comparison. We designate RHGD-50 and RHGD-20 as configurations with maximum iteration limits for solving the lower-level problem in RHGD set to 50 and 20, respectively. For consistency, we exclusively employ Algorithm \ref{alg:retr} in this section due to its computational efficiency in practice. Detailed experimental settings and additional experiments are provided in Appendix \ref{sec:exper detail}, and our codes are available at {\url{https://github.com/RufengXiao/AdaRHD}}.

\subsection{Simple problem}
\label{sec:simple}
In the first experiment, following \cite{han2024framework}, we consider a simple problem, which aims to determine the maximum similarity between two matrices $\bX \in \sR^{n \times d}$ and $\bY \in \sR^{n \times r}$, where $n \ge d \ge r$, formulated as:
\begin{equation*}
\begin{array}{lcl}
&\max\limits_{\bW \in {\rm St}(d,r)}  & {\rm trace}(\bM^*(\bW) \bX^\top \bY \bW^\top) \\  
&{\rm s.t.} & \bM^*(\bW) = \argmin\limits_{\bM \in \sS_{++}^d} \ \ \langle  \bM, \bX^\top \bX \rangle + \langle \bM^{-1}, \bW \bY^\top \bY \bW^\top + \lambda \bI \rangle,
\end{array}
\end{equation*}
where ${\rm St}(d,r) = \{ \bW \in \sR^{d \times r} : \bW^\top \bW = \bI_r \}$ and $\sS_{++}^d = \{  \bM \in \sR^{d \times d}: \bM \succ 0\}$ represent the Stiefel manifold and the symmetric positive definite (SPD) matrices, respectively, and $\lambda > 0$ is the regular parameter. The matrix $\bW \in {\rm St}(d,r)$ aligns $\bX$ and $\bY$ in a shared dimensional space, while the lower-level problem learns an appropriate geometric metric $\bM \in \sS_{++}^d$ \cite{zadeh2016geometric}. Additionally, the geodesic strong convexity of the lower-level problem and the Hessian inverse expression can be found in Appendix H of \cite{han2024framework}. In this experiment, we generate random data matrices $\bX$ and $\bY$ with two sample sizes: $n = 100$ and $n = 1000$, where $d = 50$ and $r = 20$.

\begin{figure}[htp!]
\centering
\begin{minipage}{0.49\linewidth}
\centering
\includegraphics[width=0.45\textwidth]{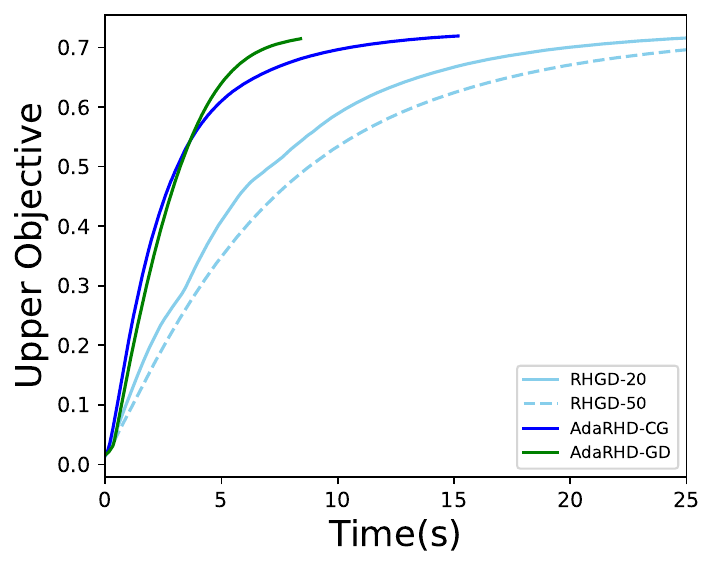}
\includegraphics[width=0.45\textwidth]{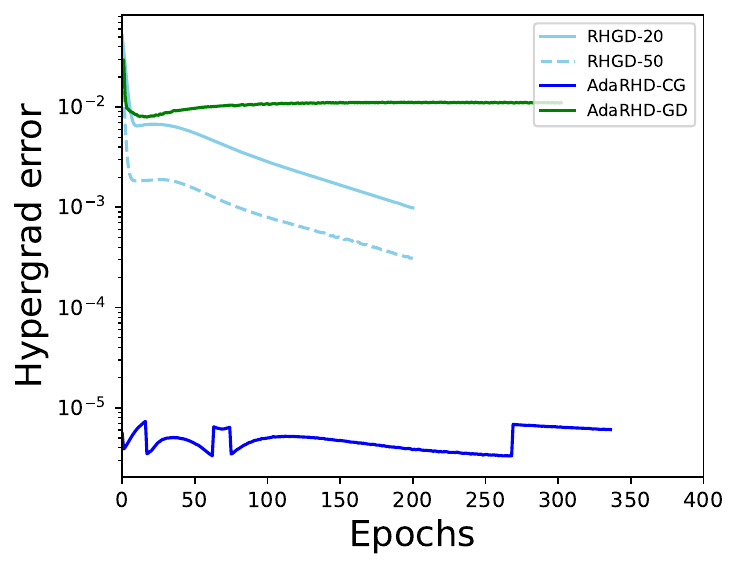}
\caption{Performances of methods in $n=100$.}
\label{100simple_loss}
\end{minipage}
\begin{minipage}{0.49\linewidth}
\centering
\includegraphics[width=0.45\textwidth]{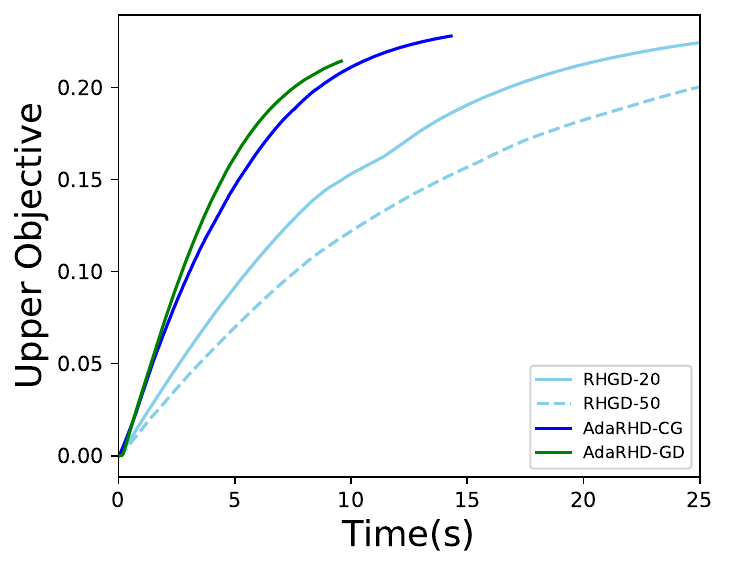}
\includegraphics[width=0.45\textwidth]{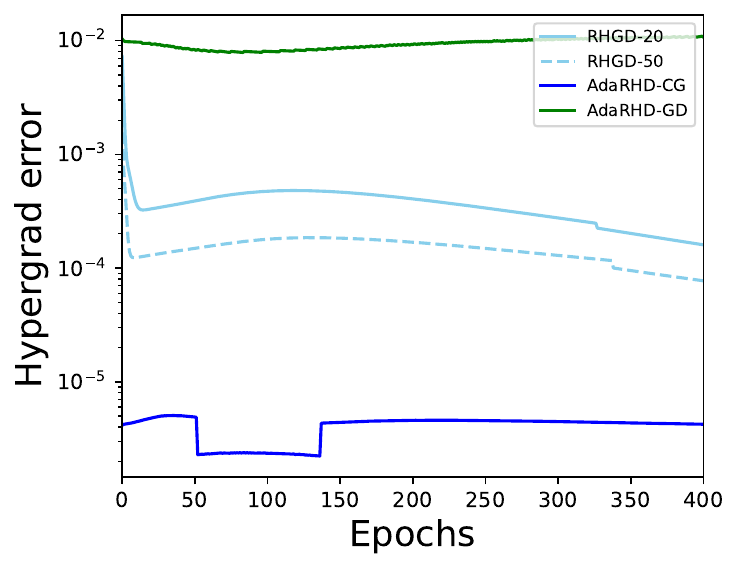}
\caption{Performances of methods in $n=1000$.}
\label{1000simple_loss}
\end{minipage}
\end{figure}

Figures \ref{100simple_loss} and \ref{1000simple_loss} show the evolution of the upper-level objective function (Upper Objective) over time and the associated hypergradient estimation errors (Hypergrad error) across outer iterations. Figure \ref{100simple_loss} corresponds to $n=100$, while Figure \ref{1000simple_loss} represents the $n=1000$ case. These results demonstrate that our algorithm exhibits faster convergence and superior performance compared to RHGD while maintaining scalability with increasing sample dimensionality, confirming its efficiency and robustness. Moreover, although GD achieves quicker initial convergence, CG demonstrates greater robustness, as evidenced by its lower hypergradient estimation errors.

\subsection{Robustness analysis}
\label{sec:robust}
In the second experiment, also from \cite[Section 4.2]{han2024framework}, we address the deep hyper-representation problem for classification, which is a subclass of hyper-representation problems \cite{franceschi2018bilevel,shaban2019truncated,yu2020hyper,lorraine2020optimizing,sow2022convergence}. In contrast to the 2-layer SPD network employed for ETH-80 dataset classification in \cite{han2024framework}, to demonstrate the efficacy of our algorithm, here we utilize a 3-layer SPD network \citep{huang2017riemannian} as the upper-level architecture to optimize input embeddings over the larger AFEW dataset \citep{dhall2014emotion}, comprising seven emotion classes. The training set contains 1,747 matrices with imbalanced class distribution (267, 235, 173, 292, 342, 288, and 150 samples per class, respectively). This optimization problem is formulated as follows:
\begin{equation*}
\begin{array}{lcl}
&\min\limits_{\bA_1,~\bA_2,~\bA_3} &-\sum\limits_{i \in \gD_{\rm val}} \frac{\by_{i}^\top \log({\rm SPDnet}(\bD_i;\bA_1,\bA_2,\bA_3) \beta^*(\bA_1,\bA_2,\bA_3))}{|\gD_{\rm val}|}, \\
&{\rm s.t.} &\beta^*(\bA_1,\bA_2,\bA_3) = \argmin\limits_{\beta \in \sR^{{r(r+1)/2}}}  -\sum\limits_{i \in \gD_{\rm tr}}  \frac{\by_{i}^\top \log({\rm SPDnet}(\bD_i;\bA_1,\bA_2,\bA_3) \beta)}{|\gD_{\rm tr}|} + \frac{\lambda}{2} \| \beta \|^2, \\
& &\bA_1 \in {\rm St}(d,d_1),~\bA_2 \in {\rm St}(d_1,d_2),~\bA_3 \in {\rm St}(d_2,r),
\end{array}
\end{equation*}
where $\text{SPDnet}(\cdot; \bA_1, \bA_2, \bA_3)$ denotes the 3-layer SPD network with layer parameters $\bA_1$, $\bA_2$, and $\bA_3$ \citep{huang2017riemannian}, the term $\by_{i}$ represents the one-hot encoded label, and $\gD = \{\bD_i\}_{i=1}^n$ denote a set of SPD matrices where $\bD_i \in \sS_{++}^d$. Each matrix $\bD_i$ has dimensions of $400 \times 400$, and we set $d_1=100$, $d_2=20$, and $r=5$.

In this study, we perform a series of experiments with varying initial step sizes to evaluate the robustness and advantages of our proposed AdaRHD algorithm. To address computational constraints, we utilize a $5\%$ subset of the AFEW dataset \citep{dhall2014emotion} rather than the full dataset, ensuring tractable training durations. For AdaRHD, we initialize hyperparameters $a_0$, $b_0$, and $c_0$ to equal values and test performance across the range $\{0.2, 1, 2, 10, 20\}$. For RHGD, we fix $\eta_x$ and $\eta_y$ to equal values and evaluate the set $\{5, 1, 0.5, 0.1, 0.05\}$. To mitigate sampling bias and validate robustness, each algorithm is executed five times with distinct random seeds, each iteration employing a unique $5\%$ data subset. This methodology enables systematic assessment of optimization sensitivity to step sizes and establishes the generalizability of our algorithm under practical constraints.

\begin{figure}[htp!]
\centering
\subfloat[AdaRHD-CG]{\includegraphics[width=0.24\linewidth]{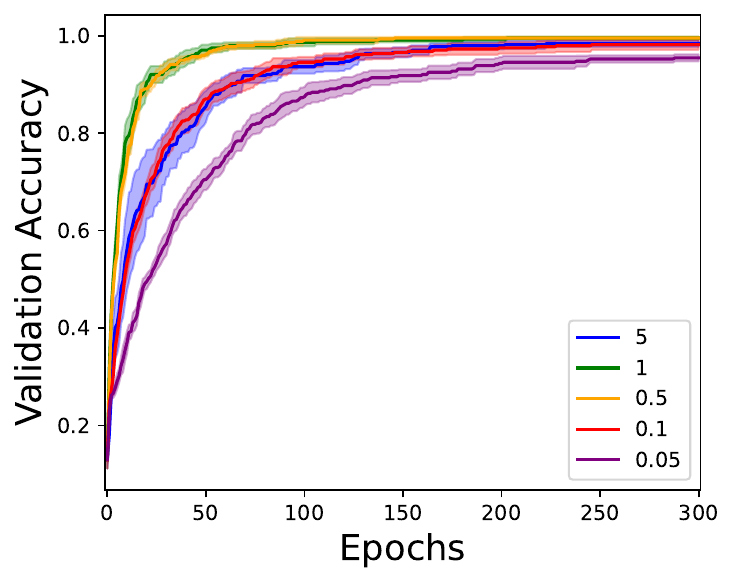}}
\subfloat[AdaRHD-GD]{\includegraphics[width=0.24\linewidth]{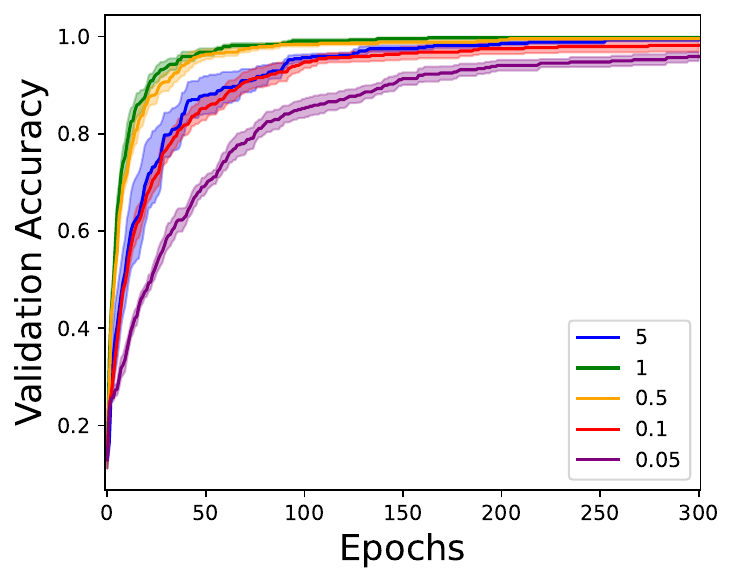}}
\subfloat[RHGD-50]{\includegraphics[width=0.24\linewidth]{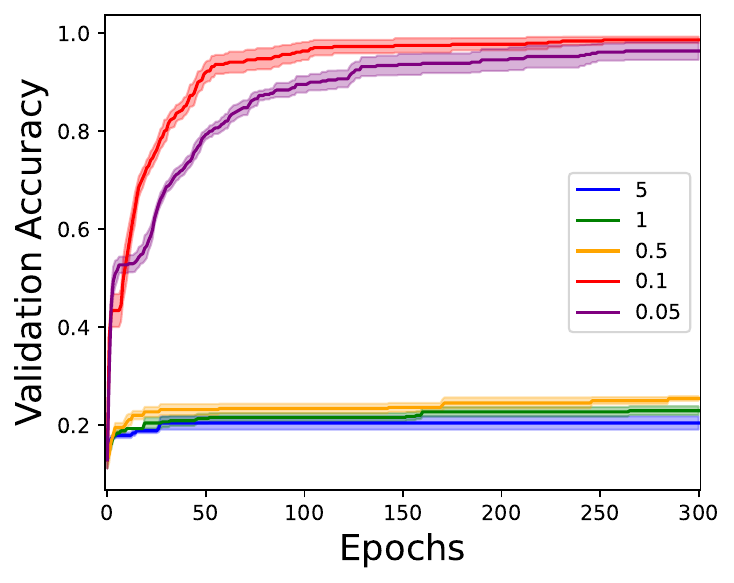}}
\subfloat[RHGD-20]{\includegraphics[width=0.24\linewidth]{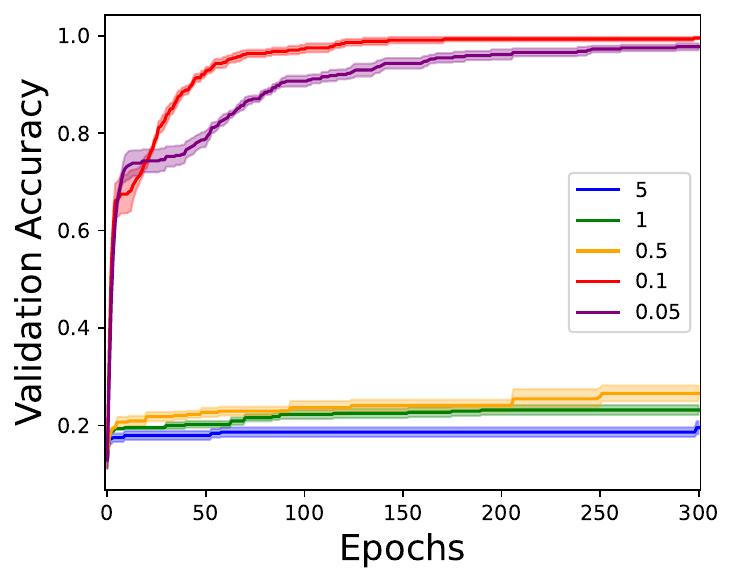}}
\\
\subfloat[AdaRHD-CG]{\includegraphics[width=0.24\linewidth]{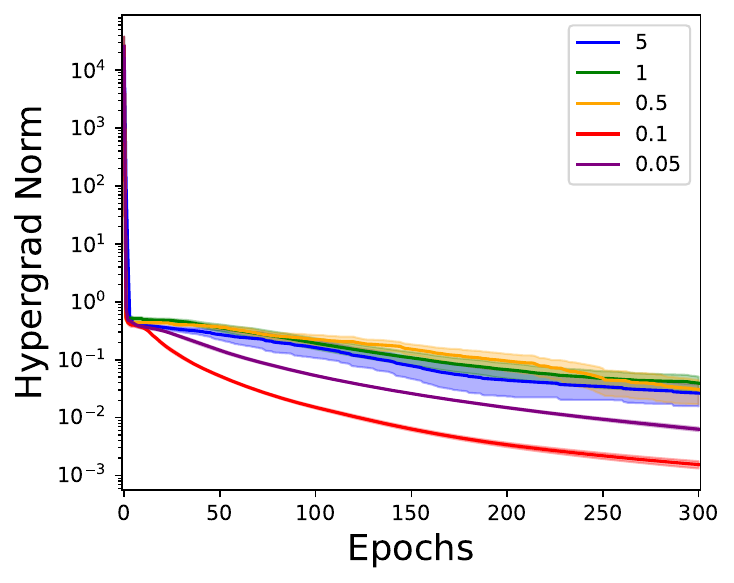}}
\subfloat[AdaRHD-GD]{\includegraphics[width=0.24\linewidth]{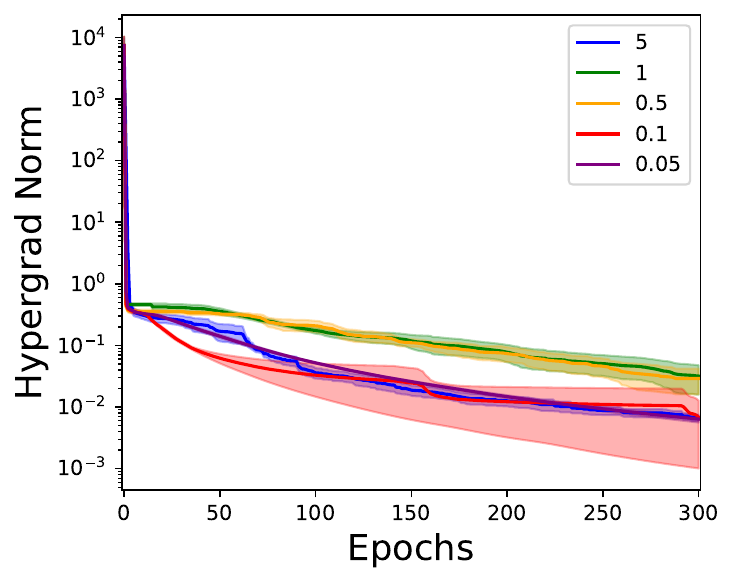}}
\subfloat[RHGD-50]{\includegraphics[width=0.24\linewidth]{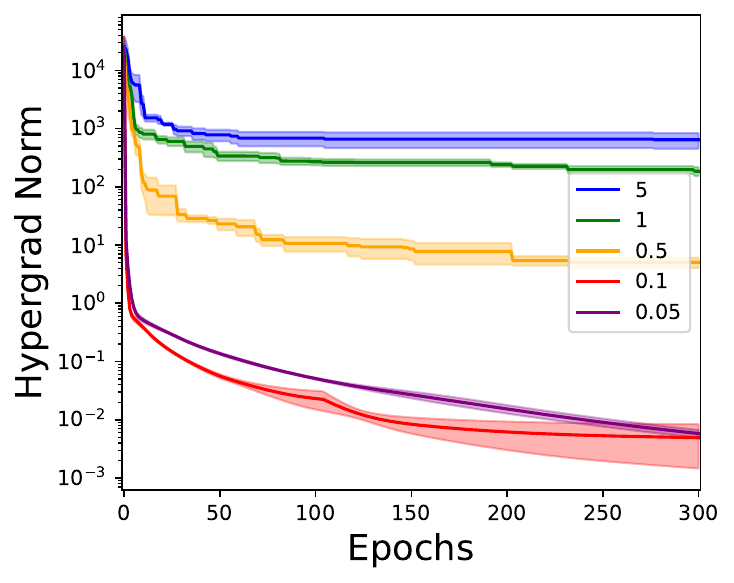}}
\subfloat[RHGD-20]{\includegraphics[width=0.24\linewidth]{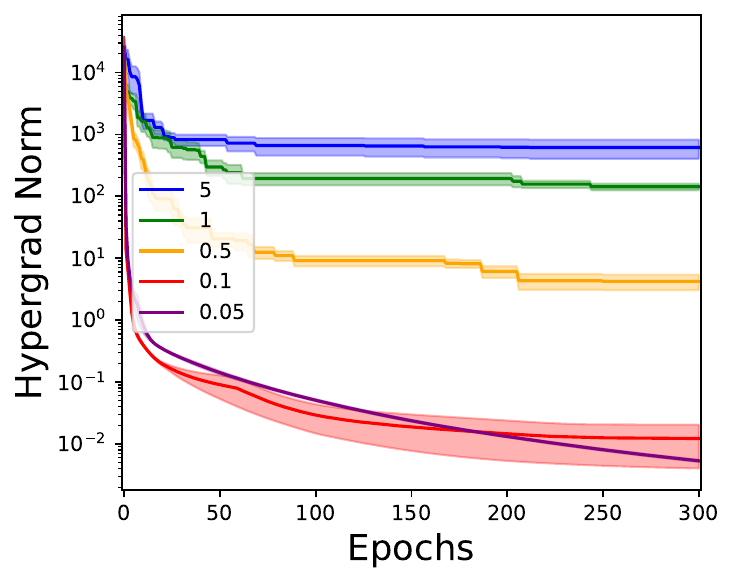}}
\caption{Epoch vs. validation accuracy and ergodic performance $\min_{i\in[0,t]}\|\widehat \gG F(x_i, y_i^{K_i}, v_i^{N_i})\|_{x_i}^2$ under different initial step sizes for each algorithm. In the figures for ``AdaRHD-X'', the labels indicate the values of $1/a_0 = 1/b_0 = 1/c_0$; in the figures for ``RHGD-X'', the labels represent the values of $\eta_x = \eta_y$.}
\label{fig: robust epoch time}
\end{figure}

\begin{table}[htp!]
\scriptsize
\setlength{\tabcolsep}{0.5pt}
\centering
\caption{Time to reach a specific validation accuracy under different initial step sizes for each algorithm. The values outside the parentheses indicate the mean over five random trials, while the values inside the parentheses represent the standard deviation. ``Step Size (A/R)'' denotes the initial step sizes used in each method: $1/a_0 = 1/b_0 = 1/c_0$ in the ``AdaRHD-X'' algorithm; $\eta_x = \eta_y$ in the ``RHGD-X'' algorithm. ``X\%'' indicates the time required to reach the corresponding validation accuracy.}
\resizebox{1.0\textwidth}{!}{
\begin{tabular}{c|ccc|ccc|ccc|ccc}
\hline
& \multicolumn{3}{c|}{AdaRHD-CG} & \multicolumn{3}{c|}{AdaRHD-GD} & \multicolumn{3}{c|}{RHGD-50} & \multicolumn{3}{c}{RHGD-20} \\
Step Size & 50\% & 70\% & 85\% & 50\% & 70\% & 85\% & 50\% & 70\% & 85\% & 50\% & 70\% & 85\% \\
\hline
\multirow{2}{*}{5.0} & \textbf{287.20} & \textbf{464.79} & \textbf{1217.06} & 432.66 & 690.62 & 1488.26 & / & / & / & / & / & /  \\
& $(\textbf{93.61})$ & $(\textbf{124.76})$ & $(\textbf{635.57})$ & $(157.76)$ & $(136.52)$ & $(426.67)$ & / & / & / & / & / & /  \\
\multirow{2}{*}{1.0} & \textbf{131.84} & \textbf{191.51} & \textbf{245.63} & 200.26 & 317.31 & 421.99 & / & / & / & / & / & /  \\
& $(\textbf{18.76})$ & $(\textbf{34.15})$ & $(\textbf{72.42})$ & $(31.21)$ & $(68.92)$ & $(88.31)$ & / & / & / & / & / & /  \\
\multirow{2}{*}{0.5} & \textbf{118.09} & \textbf{169.79} & \textbf{241.54} & 192.51 & 291.83 & 477.49 & / & / & / & / & / & /  \\
& $(\textbf{19.91})$ & $(\textbf{28.59})$ & $(\textbf{33.09})$ & $(22.86)$ & $(54.21)$ & $(120.17)$ & / & / & / & / & / & /  \\
\multirow{2}{*}{0.1} & 217.32 & 312.58 & \textbf{427.84} & 297.90 & 447.60 & 546.18 & 171.97 & 370.69 & 808.48 & \textbf{36.01} & \textbf{168.47} & 540.46 \\
& $(33.96)$ & $(\textbf{33.66})$ & $(\textbf{53.40})$ & $(42.60)$ & $(53.50)$ & $(66.68)$ & $(78.36)$ & $(68.61)$ & $(146.57)$ & $(\textbf{12.37})$ & $(120.86)$ & $(88.54)$ \\
\multirow{2}{*}{0.05} & 251.47 & 415.51 & \textbf{633.46} & 358.25 & 504.80 & 648.86 & 137.88 & 715.91 & 1510.09 & \textbf{41.36} & \textbf{179.11} & 1067.92 \\
& $(36.27)$ & $(49.75)$ & $(96.54)$ & $(32.89)$ & $(\textbf{15.58})$ & $(\textbf{42.91})$ & $(136.10)$ & $(104.62)$ & $(363.41)$ & $(\textbf{6.88})$ & $(157.76)$ & $(122.98)$ \\
\hline
\end{tabular}}
\label{tab:robust valtimes}
\end{table}

The results are presented in Figure \ref{fig: robust epoch time} and Table \ref{tab:robust valtimes}. It can be observed that the RHGD-50 and RHGD-20 fail when the initial step sizes are set to 5, 1, or 0.5, whereas the corresponding configurations of AdaRHD remain relatively stable. Notably, even when using a step size greater than 1 (i.e., $a_0 = b_0 = c_0 = 0.2$), although the performance is less stable compared to other settings, AdaRHD is still able to converge effectively. Table \ref{tab:robust valtimes} also shows that AdaRHD-CG consistently achieves 85\% validation accuracy in the shortest amount of time. On the other hand, step sizes of $a_0 = b_0 = c_0 = 1$ or $2$ yield the best performance among all initializations for the AdaRHD variants, while the corresponding RHGD configurations fail to converge. These results further demonstrate the robustness of the proposed AdaRHD method, which significantly reduces the sensitivity to the choice of initial step size and greatly improves training efficiency. Moreover, although RHGD-20 requires the shortest time to reach 50\% and 70\% validation accuracies, it demands more time than our method to achieve 85\% validation accuracy, demonstrating the efficiency of our approach. Meanwhile, the lower standard deviation exhibited by our method further highlights the robustness of our algorithm.

\section{Conclusion}
\label{sec:conclusion}
This paper proposes an adaptive algorithm for solving Riemannian bilevel optimization (RBO) problems, employing two strategies: gradient descent and conjugate gradient, to compute approximate Riemannian hypergradients. To our knowledge, this is the first fully adaptive RBO algorithm, incorporating a step size mechanism, eliminating prior knowledge requirements of problem parameters. We establish that the method achieves an $\mathcal{O}(1/\epsilon)$ iteration complexity to reach an $\epsilon$-stationary point, matching the complexity of the standard non-adaptive algorithms. Additionally, we show that substituting the exponential mapping with a computationally efficient retraction mapping maintains this complexity guarantee.

Notably, this work focuses exclusively on developing an adaptive double-loop algorithm for deterministic Riemannian bilevel optimization (RBO) problems when the lower-level objective is geodesically strongly convex. Future research directions include: (1) designing single-loop adaptive algorithms \citep{yang2024tuning}; (2) extending the framework to stochastic settings \citep{han2024framework,dutta2024riemannian,li2025riemannian} via adaptive step sizes such as inverse cumulative stochastic (hyper)gradient norms \citep{xie2020linear} or other strategies \citep{duchi2011adaptive,kingma2014adam,vaswani2019painless,loizou2021stochastic,orvieto2022dynamics,yun2023riemannian,sakai2024general}; (3) addressing geodesically convex (non-strongly convex) lower-level objectives through regularization \citep{alcantara2024theoretical} or Polyak-{\L}ojasiewicz (P{\L}) conditions \citep{chen2024finding}; and (4) investigating alternative adaptive step size strategies \citep{malitsky2019adaptive,yun2023riemannian,hu2024riemannian,sakai2024general,latafat2024adaptive,suh2025adaptive} to resolve the $1/\epsilon$-gap in gradient ($G_g$) and Hessian-vector product ($HV_g$) complexities compared to non-adaptive Riemannian bilevel methods \citep{han2024framework,li2025riemannian}.

\section*{Acknowledgements}
We sincerely appreciate the reviewers for their invaluable feedback and insightful suggestions. This work is partly supported by the National Key R\&D Program of China under grant 2023YFA1009300, National Natural Science Foundation of China under grants 12171100, and the Major Program of NFSC
(72394360,72394364).


\newpage

\bibliographystyle{plain}
\bibliography{ref}

\begin{thebibliography}{100}

\bibitem{absil2009optimization}
P-A Absil, Robert Mahony, and Rodolphe Sepulchre.
\newblock {\em Optimization Algorithms on Matrix Manifolds}.
\newblock Princeton University Press, 2009.

\bibitem{alcantara2024theoretical}
Jan~Harold Alcantara and Akiko Takeda.
\newblock Theoretical smoothing frameworks for general nonsmooth bilevel problems.
\newblock {\em arXiv preprint arXiv:2401.17852}, 2024.

\bibitem{becigneul2019riemannian}
Gary B{\'e}cigneul and Octavian-Eugen Ganea.
\newblock Riemannian adaptive optimization methods.
\newblock In {\em International Conference on Learning Representations}, 2019.

\bibitem{beck2017first}
Amir Beck.
\newblock {\em First-order methods in optimization}, volume~25.
\newblock SIAM, 2017.

\bibitem{bendory2017non}
Tamir Bendory, Yonina~C Eldar, and Nicolas Boumal.
\newblock Non-convex phase retrieval from stft measurements.
\newblock {\em IEEE Transactions on Information Theory}, 64(1):467--484, 2017.

\bibitem{bertinetto2018meta}
Luca Bertinetto, Joao~F. Henriques, Philip Torr, and Andrea Vedaldi.
\newblock Meta-learning with differentiable closed-form solvers.
\newblock In {\em International Conference on Learning Representations}, 2019.

\bibitem{bhatia2009positive}
Rajendra Bhatia.
\newblock {\em Positive definite matrices}.
\newblock Princeton university press, 2009.

\bibitem{bishop2020optimal}
Nicholas Bishop, Long Tran-Thanh, and Enrico Gerding.
\newblock Optimal learning from verified training data.
\newblock In {\em Advances in Neural Information Processing Systems}, pages 9520--9529, 2020.

\bibitem{bonnabel2013stochastic}
Silvere Bonnabel.
\newblock Stochastic gradient descent on riemannian manifolds.
\newblock {\em IEEE Transactions on Automatic Control}, 58(9):2217--2229, 2013.

\bibitem{boumal2023introduction}
Nicolas Boumal.
\newblock {\em An introduction to optimization on smooth manifolds}.
\newblock Cambridge University Press, 2023.

\bibitem{boumal2011rtrmc}
Nicolas Boumal and Pierre-antoine Absil.
\newblock Rtrmc: A riemannian trust-region method for low-rank matrix completion.
\newblock In {\em Advances in Neural Information Processing Systems}, pages 406--414, 2011.

\bibitem{boumal2019global}
Nicolas Boumal, Pierre-Antoine Absil, and Coralia Cartis.
\newblock Global rates of convergence for nonconvex optimization on manifolds.
\newblock {\em IMA Journal of Numerical Analysis}, 39(1):1--33, 2019.

\bibitem{bruckner2011stackelberg}
Michael Br{\"u}ckner and Tobias Scheffer.
\newblock Stackelberg games for adversarial prediction problems.
\newblock In {\em Proceedings of the ACM SIGKDD International Conference on Knowledge Discovery and Data Mining}, pages 547--555, 2011.

\bibitem{chen2024finding}
Lesi Chen, Jing Xu, and Jingzhao Zhang.
\newblock On finding small hyper-gradients in bilevel optimization: Hardness results and improved analysis.
\newblock In {\em Conference on Learning Theory}, pages 947--980. PMLR, 2024.

\bibitem{chen2022single}
Tianyi Chen, Yuejiao Sun, Quan Xiao, and Wotao Yin.
\newblock A single-timescale method for stochastic bilevel optimization.
\newblock In {\em International Conference on Artificial Intelligence and Statistics}, pages 2466--2488. PMLR, 2022.

\bibitem{chen2021closing}
Tianyi Chen, Yuejiao Sun, and Wotao Yin.
\newblock Closing the gap: Tighter analysis of alternating stochastic gradient methods for bilevel problems.
\newblock In {\em Advances in Neural Information Processing Systems}, pages 25294--25307, 2021.

\bibitem{cherian2016riemannian}
Anoop Cherian and Suvrit Sra.
\newblock Riemannian dictionary learning and sparse coding for positive definite matrices.
\newblock {\em IEEE Transactions on Neural Networks and Learning Systems}, 28(12):2859--2871, 2016.

\bibitem{dagreou2022framework}
Mathieu Dagr{\'e}ou, Pierre Ablin, Samuel Vaiter, and Thomas Moreau.
\newblock A framework for bilevel optimization that enables stochastic and global variance reduction algorithms.
\newblock In {\em Advances in Neural Information Processing Systems}, pages 26698--26710, 2022.

\bibitem{dhall2014emotion}
Abhinav Dhall, Roland Goecke, Jyoti Joshi, Karan Sikka, and Tom Gedeon.
\newblock Emotion recognition in the wild challenge 2014: Baseline, data and protocol.
\newblock In {\em Proceedings of the International Conference on Multimodal Interaction}, pages 461--466, 2014.

\bibitem{domke2012generic}
Justin Domke.
\newblock Generic methods for optimization-based modeling.
\newblock In {\em International Conference on Artificial Intelligence and Statistics}, pages 318--326. PMLR, 2012.

\bibitem{duchi2011adaptive}
John Duchi, Elad Hazan, and Yoram Singer.
\newblock Adaptive subgradient methods for online learning and stochastic optimization.
\newblock {\em Journal of Machine Learning Research}, 12(7):2121--2159, 2011.

\bibitem{dutta2024riemannian}
Sanchayan Dutta, Xiang Cheng, and Suvrit Sra.
\newblock Riemannian bilevel optimization.
\newblock {\em arXiv preprint arXiv:2405.15816}, 2024.

\bibitem{flamary2014learning}
R{\'e}mi Flamary, Alain Rakotomamonjy, and Gilles Gasso.
\newblock Learning constrained task similarities in graphregularized multi-task learning.
\newblock {\em Regularization, Optimization, Kernels, and Support Vector Machines}, 103:1, 2014.

\bibitem{franceschi2017forward}
Luca Franceschi, Michele Donini, Paolo Frasconi, and Massimiliano Pontil.
\newblock Forward and reverse gradient-based hyperparameter optimization.
\newblock In {\em International Conference on Machine Learning}, pages 1165--1173. PMLR, 2017.

\bibitem{franceschi2018bilevel}
Luca Franceschi, Paolo Frasconi, Saverio Salzo, Riccardo Grazzi, and Massimiliano Pontil.
\newblock Bilevel programming for hyperparameter optimization and meta-learning.
\newblock In {\em International Conference on Machine Learning}, pages 1568--1577. PMLR, 2018.

\bibitem{gabay1982minimizing}
Daniel Gabay.
\newblock Minimizing a differentiable function over a differential manifold.
\newblock {\em Journal of Optimization Theory and Applications}, 37:177--219, 1982.

\bibitem{ghadimi2018approximation}
Saeed Ghadimi and Mengdi Wang.
\newblock Approximation methods for bilevel programming.
\newblock {\em arXiv preprint arXiv:1802.02246}, 2018.

\bibitem{grazzi2020iteration}
Riccardo Grazzi, Luca Franceschi, Massimiliano Pontil, and Saverio Salzo.
\newblock On the iteration complexity of hypergradient computation.
\newblock In {\em International Conference on Machine Learning}, pages 3748--3758. PMLR, 2020.

\bibitem{han2023nonconvex}
Andi Han, Bamdev Mishra, Pratik Jawanpuria, and Junbin Gao.
\newblock Nonconvex-nonconcave min-max optimization on riemannian manifolds.
\newblock {\em Transactions on Machine Learning Research}, 2023.

\bibitem{han2023riemannian}
Andi Han, Bamdev Mishra, Pratik Jawanpuria, Pawan Kumar, and Junbin Gao.
\newblock Riemannian hamiltonian methods for min-max optimization on manifolds.
\newblock {\em SIAM Journal on Optimization}, 33(3):1797--1827, 2023.

\bibitem{han2021riemannian}
Andi Han, Bamdev Mishra, Pratik~Kumar Jawanpuria, and Junbin Gao.
\newblock On riemannian optimization over positive definite matrices with the bures-wasserstein geometry.
\newblock {\em Advances in Neural Information Processing Systems}, 34:8940--8953, 2021.

\bibitem{han2024framework}
Andi Han, Bamdev Mishra, Pratik~Kumar Jawanpuria, and Akiko Takeda.
\newblock A framework for bilevel optimization on riemannian manifolds.
\newblock In {\em Advances in Neural Information Processing Systems}, volume~37, pages 103829--103872, 2024.

\bibitem{harandi2017dimensionality}
Mehrtash Harandi, Mathieu Salzmann, and Richard Hartley.
\newblock Dimensionality reduction on spd manifolds: The emergence of geometry-aware methods.
\newblock {\em IEEE Transactions on Pattern Analysis and Machine Intelligence}, 40(1):48--62, 2017.

\bibitem{hong2023two}
Mingyi Hong, Hoi-To Wai, Zhaoran Wang, and Zhuoran Yang.
\newblock A two-timescale stochastic algorithm framework for bilevel optimization: Complexity analysis and application to actor-critic.
\newblock {\em SIAM Journal on Optimization}, 33(1):147--180, 2023.

\bibitem{horev2016geometry}
Inbal Horev, Florian Yger, and Masashi Sugiyama.
\newblock Geometry-aware principal component analysis for symmetric positive definite matrices.
\newblock In {\em Asian Conference on Machine Learning}, pages 1--16. PMLR, 2016.

\bibitem{hu2024riemannian}
Jiang Hu, Ruicheng Ao, Anthony Man-Cho So, Minghan Yang, and Zaiwen Wen.
\newblock Riemannian natural gradient methods.
\newblock {\em SIAM Journal on Scientific Computing}, 46(1):A204--A231, 2024.

\bibitem{hu2024extragradient}
Zihao Hu, Guanghui Wang, Xi~Wang, Andre Wibisono, Jacob~D Abernethy, and Molei Tao.
\newblock Extragradient type methods for riemannian variational inequality problems.
\newblock In {\em International Conference on Artificial Intelligence and Statistics}, pages 2080--2088. PMLR, 2024.

\bibitem{huang2023gradient}
Feihu Huang and Shangqian Gao.
\newblock Gradient descent ascent for minimax problems on riemannian manifolds.
\newblock {\em IEEE Transactions on Pattern Analysis and Machine Intelligence}, 45(7):8466--8476, 2023.

\bibitem{huang2017riemannian}
Zhiwu Huang and Luc Van~Gool.
\newblock A riemannian network for spd matrix learning.
\newblock In {\em Proceedings of the AAAI Conference on Artificial Intelligence}, volume~31, pages 2036--2042, 2017.

\bibitem{ji2020convergence}
Kaiyi Ji, Jason~D Lee, Yingbin Liang, and H~Vincent Poor.
\newblock Convergence of meta-learning with task-specific adaptation over partial parameters.
\newblock In {\em Advances in Neural Information Processing Systems}, volume~33, pages 11490--11500, 2020.

\bibitem{ji2022will}
Kaiyi Ji, Mingrui Liu, Yingbin Liang, and Lei Ying.
\newblock Will bilevel optimizers benefit from loops.
\newblock In {\em Advances in Neural Information Processing Systems}, volume~35, pages 3011--3023, 2022.

\bibitem{ji2021bilevel}
Kaiyi Ji, Junjie Yang, and Yingbin Liang.
\newblock Bilevel optimization: Convergence analysis and enhanced design.
\newblock In {\em International Conference on Machine Learning}, pages 4882--4892. PMLR, 2021.

\bibitem{jordan2022first}
Michael Jordan, Tianyi Lin, and Emmanouil-Vasileios Vlatakis-Gkaragkounis.
\newblock First-order algorithms for min-max optimization in geodesic metric spaces.
\newblock In {\em Advances in Neural Information Processing Systems}, volume~35, pages 6557--6574, 2022.

\bibitem{kasai2019riemannian}
Hiroyuki Kasai, Pratik Jawanpuria, and Bamdev Mishra.
\newblock Riemannian adaptive stochastic gradient algorithms on matrix manifolds.
\newblock In {\em International Conference on Machine Learning}, pages 3262--3271. PMLR, 2019.

\bibitem{kasai2018riemannian}
Hiroyuki Kasai, Hiroyuki Sato, and Bamdev Mishra.
\newblock Riemannian stochastic recursive gradient algorithm.
\newblock In {\em International Conference on Machine Learning}, pages 2516--2524. PMLR, 2018.

\bibitem{kingma2014adam}
Diederik~P Kingma and Jimmy Ba.
\newblock Adam: A method for stochastic optimization.
\newblock In {\em International Conference on Learning Representations}, 2015.

\bibitem{kochurov2020geoopt}
Max Kochurov, Rasul Karimov, and Serge Kozlukov.
\newblock Geoopt: Riemannian optimization in pytorch.
\newblock {\em arXiv preprint arXiv:2005.02819}, 2020.

\bibitem{konda1999actor}
Vijay Konda and John Tsitsiklis.
\newblock Actor-critic algorithms.
\newblock In {\em Advances in Neural Information Processing Systems}, volume~12, pages 1008--1014, 1999.

\bibitem{kunapuli2008classification}
Gautam Kunapuli, Kristin~P Bennett, Jing Hu, and Jong-Shi Pang.
\newblock Classification model selection via bilevel programming.
\newblock {\em Optimization Methods \& Software}, 23(4):475--489, 2008.

\bibitem{kwon2023fully}
Jeongyeol Kwon, Dohyun Kwon, Stephen Wright, and Robert~D Nowak.
\newblock A fully first-order method for stochastic bilevel optimization.
\newblock In {\em International Conference on Machine Learning}, pages 18083--18113. PMLR, 2023.

\bibitem{latafat2024adaptive}
Puya Latafat, Andreas Themelis, Lorenzo Stella, and Panagiotis Patrinos.
\newblock Adaptive proximal algorithms for convex optimization under local lipschitz continuity of the gradient.
\newblock {\em Mathematical Programming}, 213:433--471, 2025.

\bibitem{lee2006riemannian}
John~M Lee.
\newblock {\em Riemannian manifolds: an introduction to curvature}, volume 176.
\newblock Springer Science \& Business Media, 2006.

\bibitem{li2025riemannian}
Jiaxiang Li and Shiqian Ma.
\newblock Riemannian bilevel optimization.
\newblock {\em Journal of Machine Learning Research}, 26(18):1--44, 2025.

\bibitem{lin2024robust}
Lizhen Lin, Drew Lazar, Bayan Saparbayeva, and David Dunson.
\newblock Robust optimization and inference on manifolds.
\newblock {\em Statistica Sinica}, 34:1299--1323, 2024.

\bibitem{lin2017extrinsic}
Lizhen Lin, Brian St.~Thomas, Hongtu Zhu, and David~B Dunson.
\newblock Extrinsic local regression on manifold-valued data.
\newblock {\em Journal of the American Statistical Association}, 112(519):1261--1273, 2017.

\bibitem{liu2022bome}
Bo~Liu, Mao Ye, Stephen Wright, Peter Stone, and Qiang Liu.
\newblock Bome! bilevel optimization made easy: A simple first-order approach.
\newblock In {\em Advances in Neural Information Processing Systems}, volume~35, pages 17248--17262, 2022.

\bibitem{loizou2021stochastic}
Nicolas Loizou, Sharan Vaswani, Issam~Hadj Laradji, and Simon Lacoste-Julien.
\newblock Stochastic polyak step-size for sgd: An adaptive learning rate for fast convergence.
\newblock In {\em International Conference on Artificial Intelligence and Statistics}, pages 1306--1314. PMLR, 2021.

\bibitem{lorraine2020optimizing}
Jonathan Lorraine, Paul Vicol, and David Duvenaud.
\newblock Optimizing millions of hyperparameters by implicit differentiation.
\newblock In {\em International Conference on Artificial Intelligence and Statistics}, pages 1540--1552. PMLR, 2020.

\bibitem{maclaurin2015gradient}
Dougal Maclaurin, David Duvenaud, and Ryan Adams.
\newblock Gradient-based hyperparameter optimization through reversible learning.
\newblock In {\em International Conference on Machine Learning}, pages 2113--2122. PMLR, 2015.

\bibitem{malitsky2019adaptive}
Yura Malitsky and Konstantin Mishchenko.
\newblock Adaptive gradient descent without descent.
\newblock In {\em International Conference on Machine Learning}, pages 6702--6712. PMLR, 2020.

\bibitem{martinez2023accelerated}
David Mart{\'i}nez-Rubio, Christophe Roux, Christopher Criscitiello, and Sebastian Pokutta.
\newblock Accelerated methods for riemannian min-max optimization ensuring bounded geometric penalties.
\newblock In {\em International Conference on Machine Learning}, pages 280--288. PMLR, 2025.

\bibitem{mishra2019riemannian}
Bamdev Mishra, Hiroyuki Kasai, Pratik Jawanpuria, and Atul Saroop.
\newblock A riemannian gossip approach to subspace learning on grassmann manifold.
\newblock {\em Machine Learning}, 108:1783--1803, 2019.

\bibitem{orvieto2022dynamics}
Antonio Orvieto, Simon Lacoste-Julien, and Nicolas Loizou.
\newblock Dynamics of sgd with stochastic polyak stepsizes: Truly adaptive variants and convergence to exact solution.
\newblock In {\em Advances in Neural Information Processing Systems}, volume~35, pages 26943--26954, 2022.

\bibitem{pedregosa2016hyperparameter}
Fabian Pedregosa.
\newblock Hyperparameter optimization with approximate gradient.
\newblock In {\em International Conference on Machine Learning}, pages 737--746. PMLR, 2016.

\bibitem{petersen2006riemannian}
Peter Petersen.
\newblock {\em Riemannian geometry}, volume 171.
\newblock Springer, 2006.

\bibitem{rajeswaran2019meta}
Aravind Rajeswaran, Chelsea Finn, Sham~M Kakade, and Sergey Levine.
\newblock Meta-learning with implicit gradients.
\newblock In {\em Advances in Neural Information Processing Systems}, volume~32, pages 113--124, 2019.

\bibitem{roy2018geometry}
Soumava~Kumar Roy, Zakaria Mhammedi, and Mehrtash Harandi.
\newblock Geometry aware constrained optimization techniques for deep learning.
\newblock In {\em Proceedings of the IEEE Conference on Computer Vision and Pattern Recognition}, pages 4460--4469, 2018.

\bibitem{sakai2021riemannian}
Hiroyuki Sakai and Hideaki Iiduka.
\newblock Riemannian adaptive optimization algorithm and its application to natural language processing.
\newblock {\em IEEE Transactions on Cybernetics}, 52(8):7328--7339, 2021.

\bibitem{sakai2024general}
Hiroyuki Sakai and Hideaki Iiduka.
\newblock A general framework of riemannian adaptive optimization methods with a convergence analysis.
\newblock {\em arXiv preprint arXiv:2409.00859}, 2024.

\bibitem{sato2016dai}
Hiroyuki Sato.
\newblock A dai--yuan-type riemannian conjugate gradient method with the weak wolfe conditions.
\newblock {\em Computational Optimization and Applications}, 64:101--118, 2016.

\bibitem{sato2019riemannian}
Hiroyuki Sato, Hiroyuki Kasai, and Bamdev Mishra.
\newblock Riemannian stochastic variance reduced gradient algorithm with retraction and vector transport.
\newblock {\em SIAM Journal on Optimization}, 29(2):1444--1472, 2019.

\bibitem{shaban2019truncated}
Amirreza Shaban, Ching-An Cheng, Nathan Hatch, and Byron Boots.
\newblock Truncated back-propagation for bilevel optimization.
\newblock In {\em International Conference on Artificial Intelligence and Statistics}, pages 1723--1732. PMLR, 2019.

\bibitem{sow2022convergence}
Daouda Sow, Kaiyi Ji, and Yingbin Liang.
\newblock On the convergence theory for hessian-free bilevel algorithms.
\newblock In {\em Advances in Neural Information Processing Systems}, volume~35, pages 4136--4149, 2022.

\bibitem{sra2015conic}
Suvrit Sra and Reshad Hosseini.
\newblock Conic geometric optimization on the manifold of positive definite matrices.
\newblock {\em SIAM Journal on Optimization}, 25(1):713--739, 2015.

\bibitem{suh2025adaptive}
Jaewook~J Suh and Shiqian Ma.
\newblock An adaptive and parameter-free nesterov's accelerated gradient method for convex optimization.
\newblock {\em arXiv preprint arXiv:2505.11670}, 2025.

\bibitem{sukthanker2020neural}
Rhea~Sanjay Sukthanker, Zhiwu Huang, Suryansh Kumar, Erik Goron~Endsjo, Yan Wu, and Luc Van~Gool.
\newblock Neural architecture search of spd manifold networks.
\newblock In {\em Proceedings of the International Joint Conference on Artificial Intelligence}, pages 3002--3009. International Joint Conferences on Artificial Intelligence Organization, 2021.

\bibitem{sun2016complete}
Ju~Sun, Qing Qu, and John Wright.
\newblock Complete dictionary recovery over the sphere ii: Recovery by riemannian trust-region method.
\newblock {\em IEEE Transactions on Information Theory}, 63(2):885--914, 2016.

\bibitem{sun2018geometric}
Ju~Sun, Qing Qu, and John Wright.
\newblock A geometric analysis of phase retrieval.
\newblock {\em Foundations of Computational Mathematics}, 18:1131--1198, 2018.

\bibitem{tabealhojeh2023rmaml}
Hadi Tabealhojeh, Peyman Adibi, Hossein Karshenas, Soumava~Kumar Roy, and Mehrtash Harandi.
\newblock Rmaml: Riemannian meta-learning with orthogonality constraints.
\newblock {\em Pattern Recognition}, 140:109563, 2023.

\bibitem{trefethen2022numerical}
Lloyd~N Trefethen and David Bau.
\newblock {\em Numerical linear algebra}.
\newblock SIAM, 2022.

\bibitem{tripuraneni2018averaging}
Nilesh Tripuraneni, Nicolas Flammarion, Francis Bach, and Michael~I Jordan.
\newblock Averaging stochastic gradient descent on riemannian manifolds.
\newblock In {\em Conference On Learning Theory}, pages 650--687. PMLR, 2018.

\bibitem{tsaknakis2022implicit}
Ioannis Tsaknakis, Prashant Khanduri, and Mingyi Hong.
\newblock An implicit gradient-type method for linearly constrained bilevel problems.
\newblock In {\em ICASSP 2022-2022 IEEE International Conference on Acoustics, Speech and Signal Processing}, pages 5438--5442. IEEE, 2022.

\bibitem{tu2011manifolds}
Loring~W Tu.
\newblock {\em An Introduction to Manifolds}.
\newblock Springer, 2011.

\bibitem{vandereycken2013low}
Bart Vandereycken.
\newblock Low-rank matrix completion by riemannian optimization.
\newblock {\em SIAM Journal on Optimization}, 23(2):1214--1236, 2013.

\bibitem{vaswani2019painless}
Sharan Vaswani, Aaron Mishkin, Issam Laradji, Mark Schmidt, Gauthier Gidel, and Simon Lacoste-Julien.
\newblock Painless stochastic gradient: Interpolation, line-search, and convergence rates.
\newblock In {\em Advances in Neural Information Processing Systems}, volume~32, pages 3732--3745, 2019.

\bibitem{wang2021fast}
Jiali Wang, He~Chen, Rujun Jiang, Xudong Li, and Zihao Li.
\newblock Fast algorithms for stackelberg prediction game with least squares loss.
\newblock In {\em International Conference on Machine Learning}, pages 10708--10716. PMLR, 2021.

\bibitem{wang2022solving}
Jiali Wang, Wen Huang, Rujun Jiang, Xudong Li, and Alex~L Wang.
\newblock Solving stackelberg prediction game with least squares loss via spherically constrained least squares reformulation.
\newblock In {\em International Conference on Machine Learning}, pages 22665--22679. PMLR, 2022.

\bibitem{wang2023riemannian}
Xi~Wang, Deming Yuan, Yiguang Hong, Zihao Hu, Lei Wang, and Guodong Shi.
\newblock Riemannian optimistic algorithms.
\newblock {\em arXiv preprint arXiv:2308.16004}, 2023.

\bibitem{ward2020adagrad}
Rachel Ward, Xiaoxia Wu, and Leon Bottou.
\newblock Adagrad stepsizes: Sharp convergence over nonconvex landscapes.
\newblock {\em Journal of Machine Learning Research}, 21(219):1--30, 2020.

\bibitem{xiao2023alternating}
Quan Xiao, Han Shen, Wotao Yin, and Tianyi Chen.
\newblock Alternating projected sgd for equality-constrained bilevel optimization.
\newblock In {\em International Conference on Artificial Intelligence and Statistics}, pages 987--1023. PMLR, 2023.

\bibitem{xie2020linear}
Yuege Xie, Xiaoxia Wu, and Rachel Ward.
\newblock Linear convergence of adaptive stochastic gradient descent.
\newblock In {\em International Conference on Artificial Intelligence and Statistics}, pages 1475--1485. PMLR, 2020.

\bibitem{yang2024tuning}
Yifan Yang, Hao Ban, Minhui Huang, Shiqian Ma, and Kaiyi Ji.
\newblock Tuning-free bilevel optimization: New algorithms and convergence analysis.
\newblock In {\em International Conference on Learning Representations}, 2025.

\bibitem{yao2024constrained}
Wei Yao, Chengming Yu, Shangzhi Zeng, and Jin Zhang.
\newblock Constrained bi-level optimization: Proximal lagrangian value function approach and hessian-free algorithm.
\newblock In {\em International Conference on Learning Representations}, 2024.

\bibitem{yu2020hyper}
Tong Yu and Hong Zhu.
\newblock Hyper-parameter optimization: A review of algorithms and applications.
\newblock {\em arXiv preprint arXiv:2003.05689}, 2020.

\bibitem{yun2023riemannian}
Jihun Yun and Eunho Yang.
\newblock Riemannian sam: Sharpness-aware minimization on riemannian manifolds.
\newblock In {\em Advances in Neural Information Processing Systems}, volume~36, pages 65784--65800, 2023.

\bibitem{zadeh2016geometric}
Pourya Zadeh, Reshad Hosseini, and Suvrit Sra.
\newblock Geometric mean metric learning.
\newblock In {\em International Conference on Machine Learning}, pages 2464--2471. PMLR, 2016.

\bibitem{zangrando2025debora}
Emanuele Zangrando, Francesco Rinaldi, Francesco Tudisco, et~al.
\newblock debora: Efficient bilevel optimization-based low-rank adaptation.
\newblock In {\em International Conference on Learning Representations}, 2025.

\bibitem{zhang2016riemannian}
Hongyi Zhang, Sashank J~Reddi, and Suvrit Sra.
\newblock Riemannian svrg: Fast stochastic optimization on riemannian manifolds.
\newblock In {\em Advances in Neural Information Processing Systems}, volume~29, pages 4599--4607, 2016.

\bibitem{zhang2016first}
Hongyi Zhang and Suvrit Sra.
\newblock First-order methods for geodesically convex optimization.
\newblock In {\em Conference on Learning Theory}, pages 1617--1638. PMLR, 2016.

\bibitem{zhang2018r}
Jingzhao Zhang, Hongyi Zhang, and Suvrit Sra.
\newblock R-spider: A fast riemannian stochastic optimization algorithm with curvature independent rate.
\newblock {\em arXiv preprint arXiv:1811.04194}, 2018.

\bibitem{zhang2023sion}
Peiyuan Zhang, Jingzhao Zhang, and Suvrit Sra.
\newblock Sion’s minimax theorem in geodesic metric spaces and a riemannian extragradient algorithm.
\newblock {\em SIAM Journal on Optimization}, 33(4):2885--2908, 2023.

\bibitem{zhou2025adabb}
Danqing Zhou, Shiqian Ma, and Junfeng Yang.
\newblock Adabb: Adaptive barzilai-borwein method for convex optimization.
\newblock {\em Mathematics of Operations Research}, 2025.

\bibitem{zhou2019faster}
Pan Zhou, Xiao-Tong Yuan, and Jiashi Feng.
\newblock Faster first-order methods for stochastic non-convex optimization on riemannian manifolds.
\newblock In {\em International Conference on Artificial Intelligence and Statistics}, pages 138--147. PMLR, 2019.

\end{thebibliography}

\newpage
\appendix

\section*{Appendix}

\section{Related works}
\label{sec:related work}
\textbf{Bilevel optimization:} Bilevel optimization is rooted in the Stackelberg game \citep{bruckner2011stackelberg,bishop2020optimal,wang2021fast,wang2022solving}. When the lower-level objective is strongly convex, various methods have been proposed \citep{ghadimi2018approximation, chen2021closing, ji2021bilevel, ji2022will, dagreou2022framework, liu2022bome, hong2023two, kwon2023fully}. Hypergradient-based approaches include approximate implicit differentiation (AID) \citep{domke2012generic, pedregosa2016hyperparameter}; iterative differentiation (ITD) \citep{maclaurin2015gradient, franceschi2017forward, shaban2019truncated, grazzi2020iteration}; Neumann series (NS) \citep{ghadimi2018approximation}; and conjugate gradient (CG) \citep{ji2021bilevel}. Recent studies address bilevel problems with constraints on either the upper- or lower-level objective. For example, \cite{chen2022single, hong2023two} focus on scenarios where constraints are imposed solely on the upper-level objective, whereas \cite{tsaknakis2022implicit, xiao2023alternating, yao2024constrained} develop methods for problems with lower-level constraints. For a comprehensive overview, we refer readers to \cite{ji2021bilevel, ji2022will, kwon2023fully} and the references therein.

\textbf{Riemannian optimization:} Riemannian optimization has attracted considerable attention due to its broad applications, including low-rank matrix completion \citep{boumal2011rtrmc, vandereycken2013low}; phase retrieval \citep{bendory2017non, sun2018geometric}; dictionary learning \citep{cherian2016riemannian, sun2016complete}; dimensionality reduction \citep{harandi2017dimensionality, tripuraneni2018averaging, mishra2019riemannian}; and manifold regression \citep{lin2017extrinsic, lin2024robust}. Various methods have been developed, such as Riemannian (stochastic) gradient descent \citep{gabay1982minimizing, absil2009optimization,bonnabel2013stochastic, bendory2017non, boumal2019global}; nonlinear conjugate gradients \citep{sato2016dai}; and variance-reduced stochastic gradients \citep{zhang2016riemannian,zhang2018r,kasai2018riemannian,sato2019riemannian,zhou2019faster}. For further details, see \cite{boumal2023introduction} and references therein.

\textbf{Riemannian adaptive optimization:} Several adaptive algorithms have been developed to solve Riemannian optimization problems. Notable examples include the Riemannian adaptive stochastic gradient algorithm (RASA) \citep{kasai2019riemannian}, Riemannian AMSGrad (RAMSGrad) \citep{becigneul2019riemannian}, modified RAMSGrad \citep{sakai2021riemannian}, constrained RMSProp (cRMSProp) \citep{roy2018geometry}, sharpness-aware minimization on Riemannian manifolds (Riemannian SAM) \citep{yun2023riemannian}, Riemannian natural gradient descent (RNGD) \citep{hu2024riemannian}, and a framework for Riemannian adaptive optimization \citep{sakai2024general}. For a comprehensive overview of their application scopes, we refer readers to \citep{sakai2024general}.

\textbf{Riemannian bilevel optimization:}
Research on Riemannian bilevel optimization remains limited to a few recent studies. \cite{li2025riemannian} investigated hypergradient computation for such problems on Riemannian manifolds and developed deterministic and stochastic algorithms, namely the algorithm for Riemannian bilevel optimization (RieBO) and the algorithm for Riemannian stochastic bilevel optimization (RieSBO), for solving Riemannian bilevel and stochastic bilevel optimization, respectively.  In parallel, \cite{han2024framework} proposed a framework termed Riemannian hypergradient descent (RHGD), which provides multiple hypergradient estimation strategies, supported by convergence and complexity analyses. The authors also extended their framework to address Riemannian minimax and compositional optimization problems. By leveraging the value-function reformulation and the Lagrangian method, \cite{dutta2024riemannian} introduced a fully stochastic first-order approach, the Riemannian first-order fast stochastic approximation (RF$^2$SA), which is applicable to scenarios where both objectives are stochastic or deterministic.

\textbf{Adaptive bilevel optimization:}
The closest related approach to our method is the D-TFBO proposed in \cite{yang2024tuning}, which tackles Euclidean bilevel optimization by employing a similar adaptive step size strategy. While some aspects of Euclidean analysis extend directly to Riemannian settings, geometric curvature introduces distortions that create unique analytical challenges. We derive convergence guarantees similar to those of D-TFBO and enhance our algorithm's efficiency by replacing exponential mapping with retraction mapping, thereby reducing computational overhead. To our knowledge, this work presents the first fully adaptive method with theoretical convergence guarantees for solving Riemannian bilevel optimization problems.

\section{Additional preliminaries for Section \ref{sec:pre}}
\label{sec:addi preliminaries}
The Lipschitzness of the functions and operators in the Riemannian manifolds are defined as follows:
\begin{definition}\citep[Definition 1]{han2024framework}
\label{def:lip}
For any $x, x_1, x_2 \in \gU_x, y, y_1, y_2 \in \gU_y$, where $\gU_x \times \gU_y \subseteq \gM_x \times \gM_y$,
\begin{enumerate}[(i)]
\item a function $f: \gM_x \rightarrow \sR$ is said to have $L$-Lipschitz Riemannian gradient in $\gU_x$ if $\| \gP_{x_1}^{x_2} \gG f(x_1) - \gG f(x_2) \|_{x_2} \le L d(x_1, x_2)$.
\item a bi-function $f: \gM_x \times \gM_y \rightarrow \sR$ is said to have $L$-Lipschitz Riemannian gradient in $\gU_x \times \gU_y$ if $\| \gP_{y_1}^{y_2} \gG_y f(x, y_1) - \gG_y f(x,y_2) \|_{y_2} \le L d(y_1, y_2)$, $ \| \gG_x f(x, y_1) -  \gG_x f(x, y_2) \|_{x} \le L d(y_1, y_2)$, $\| \gP_{x_1}^{x_2} \gG_x f(x_1, y) - \gG_x f(x_2, y)  \|_{x_2} \le L d(x_1, x_2)$ and $\| \gG_y f(x_1, y) -  \gG_y f(x_2, y) \|_{y} \le L d(x_1, x_2)$. 
\item a linear operator $\gG(x,y) : T_y\gM_y \rightarrow T_x \gM_x$ (e.g. $\gG_{xy}g(x,y)$), is said to be $\rho$-Lipschitz in $\gU_x \times \gU_y$ if $\| \gP_{x_1}^{x_2} \gG (x_1, y)  - \gG (x_2, y) \|_{x_2} \le \rho \, d(x_1, x_2)$ and $\| \gG (x, y_1) - \gG (x,y_2) \gP_{y_1}^{y_2} \|_{x} \le \rho \, d(y_1, y_2)$. 
\item a linear operator $\gH(x,y) : T_y \gM_y \rightarrow T_y \gM_y$ (e.g. $\gH_yg(x,y)$), is said to be $\rho$-Lipschitz in $\gU_x \times \gU_y$ if $\| \gP_{y_1}^{y_2} \gH (x, y_1) \gP_{y_2}^{y_1} - \gH (x, y_2) \|_{y_2} \le \rho \, d(y_1, y_2)$ and $
\|  \gH (x_1, y) - \gH (x_2, y) \|_{y} \le \rho \, d(x_1, x_2)$.
\end{enumerate}
\end{definition}

Due to the curvature of Riemannian manifolds, the notion of distance differs from that in Euclidean space. Accordingly, we have the following trigonometric distance bound on Riemannian manifolds.
\begin{lemma}[Trigonometric distance bound \cite{zhang2016first,zhang2016riemannian,han2021riemannian}]
\label{lem:trig}
Let $x_a, x_b, x_c \in \gU \subseteq \gM$ and denote $a = d(x_b, x_c)$, $b = d(x_a, x_c)$ and $c = d(x_a, x_b)$ as the geodesic side lengths. Then, it holds that
\begin{equation*}
a^2 \le \zeta(\tau, c) b^2 + c^2 - 2 \left\langle \Exp_{x_a}^{-1} (x_b), \Exp_{x_a}^{-1}(x_c) \right\rangle_{x_a}
\end{equation*}
where $\zeta(\tau, c) = \frac{\sqrt{|\tau|}c}{\tanh(\sqrt{|\tau|}c)}$ if $\tau < 0$ and $\zeta(\tau, c) = 1$ if $\tau \ge 0$, and $\tau$ denotes the lower bound of the sectional curvature of $\gU$ \citep[Section 3.1.3]{petersen2006riemannian}.
\end{lemma}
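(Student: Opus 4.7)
The plan is to prove this bound by combining Toponogov's geodesic triangle comparison theorem with the law of cosines in the appropriate constant-curvature model space. A preliminary observation is that the inner product on the right-hand side admits the geometric interpretation $\langle \Exp_{x_a}^{-1}(x_b), \Exp_{x_a}^{-1}(x_c)\rangle_{x_a} = bc\cos A$, where $A$ is the angle at $x_a$ in the geodesic triangle $\Delta(x_a, x_b, x_c)$, since the two logarithm vectors have norms $c$ and $b$ respectively. The claim then takes the familiar form $a^2 \le \zeta(\tau, c)\, b^2 + c^2 - 2bc\cos A$, to be compared with the classical Euclidean law of cosines.

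For the non-negative curvature case $\tau \geq 0$, since the sectional curvature is bounded below by $0$, Toponogov's theorem applied with a Euclidean comparison space yields $a \leq \tilde{a}$, where the comparison side satisfies $\tilde{a}^2 = b^2 + c^2 - 2bc\cos A$; this immediately gives the claim with $\zeta(\tau, c) = 1$. For the negative curvature case $\tau < 0$, I would set $s := \sqrt{|\tau|}$ and apply Toponogov's theorem to the hyperbolic model of constant curvature $\tau$, which produces $a \leq \tilde{a}$ with
\[
\cosh(s\tilde{a}) = \cosh(sb)\cosh(sc) - \sinh(sb)\sinh(sc)\cos A.
\]
The remaining task is to translate this multiplicative hyperbolic identity into the desired additive quadratic bound carrying the correction factor $\zeta(\tau, c) = sc/\tanh(sc)$.

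The main obstacle is precisely this final calculus step, whose delicacy stems from the asymmetric placement of $\zeta(\tau, c)$ — it weights $b^2$ but not $c^2$, reflecting that the reference point $x_b$ sits at distance $c$ from the base vertex $x_a$. The strategy is to invoke the product-to-sum identity $\cosh(sb)\cosh(sc) - \sinh(sb)\sinh(sc) = \cosh(s(b-c))$, rewriting the hyperbolic law of cosines as $\cosh(s\tilde{a}) - \cosh(s(b-c)) = \sinh(sb)\sinh(sc)(1 - \cos A)$, and then combining elementary bounds such as $\cosh(sx) - 1 \geq (sx)^2/2$ and $\sinh(sy) \leq sy\cosh(sy)$ with the algebraic identity $(b-c)^2 + 2bc(1 - \cos A) = b^2 + c^2 - 2bc\cos A$ to assemble the final quadratic inequality. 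Equivalently, one can reformulate the argument via the Hessian comparison theorem for the squared-distance function $\phi(p) := \tfrac12 d^2(p, x_b)$, whose Hessian in the hyperbolic model has eigenvalue $1$ in the radial direction and $sr/\tanh(sr)$ in tangential directions at distance $r = d(p, x_b)$; a second-order Taylor expansion of $\phi$ along the geodesic from $x_a$ to $x_c$ (with $\nabla\phi(x_a) = -\Exp_{x_a}^{-1}(x_b)$) then yields the bound, provided the integrated tangential correction factor is organized so that the base value $r = c$ emerges as the relevant argument of $\zeta$. Either route shows how the curvature-dependent $\zeta(\tau, c)$ captures the fattening of hyperbolic triangles precisely along the direction controlled by the reference distance $c$.
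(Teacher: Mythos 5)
First, a point of reference: the paper does not prove Lemma \ref{lem:trig} at all; it is imported with citations from \cite{zhang2016first,zhang2016riemannian,han2021riemannian}, so your attempt must be measured against the proof in those sources. Your architecture is the standard and correct one and matches theirs: identify $\langle \Exp_{x_a}^{-1}(x_b), \Exp_{x_a}^{-1}(x_c)\rangle_{x_a} = bc\cos A$, dispose of $\tau \ge 0$ by Euclidean (hinge) comparison, and reduce $\tau<0$ to the hyperbolic law of cosines in the constant-curvature model via Toponogov. The gap is in the final step, and it is not just unfinished bookkeeping: the specific chain of elementary inequalities you propose provably cannot close it. Writing $\cosh(sa)-1=\bigl(\cosh(s(b-c))-1\bigr)+\sinh(sb)\sinh(sc)(1-\cos A)$ and then invoking $a^2\le \tfrac{2}{s^2}(\cosh(sa)-1)$ replaces $a^2$ by a quantity that grows like $e^{s(b+c)}$ when $A$ is near $\pi$, while the target $\zeta(\tau,c)b^2+c^2-2bc\cos A$ is quadratic in $b$ with a coefficient depending only on $c$. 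Concretely, with $s=1$, $c=1$, $b=10$, $A=\pi$, your intermediate upper bound is about $6\times 10^4$, whereas the right-hand side of the lemma is about $152$ (and $a^2=(b+c)^2=121$). The bounds $\cosh(sx)-1\ge (sx)^2/2$ and $\sinh(sy)\le sy\cosh(sy)$ are individually true but either point the wrong way for the term $\cosh(s(b-c))-1$ or are exponentially lossy for the $\sinh(sb)\sinh(sc)$ term; no termwise assembly of them yields the claim. The proof in \cite{zhang2016first} instead establishes $a^2\le \zeta(\tau,c)b^2+c^2-2bc\cos A$ for the comparison triangle by a monotonicity argument: fix $c$ and $A$, verify that both sides and their first derivatives in $b$ agree at $b=0$, and compare derivatives using $\sinh(sa)\,\partial a/\partial b=\sinh(sb)\cosh(sc)-\cosh(sb)\sinh(sc)\cos A$. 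That is precisely the delicate step your sketch defers.

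Your alternative route via the Hessian of $\phi(\cdot)=\tfrac12 d^2(\cdot,x_b)$ also does not deliver the stated constant as described. The comparison theorem bounds the tangential eigenvalues of the Hessian at a point $p$ by $\sqrt{|\tau|}\,r\coth(\sqrt{|\tau|}\,r)$ with $r=d(p,x_b)$, and the second-order expansion along the geodesic from $x_a$ to $x_c$ needs this bound at every point of that geodesic, where $r$ ranges up to $\max(a,c)$ (potentially as large as $b+c$), not merely $c$. Since $r\mapsto \sqrt{|\tau|}\,r\coth(\sqrt{|\tau|}\,r)$ is increasing, this argument yields $\zeta(\tau,\max(a,c))$ in place of $\zeta(\tau,c)$, which is strictly weaker and not the form the paper relies on downstream (Assumption \ref{ass:curvature} bounds $\zeta$ evaluated at $d(y_t^k,y^*(x_t))$, i.e., at the side playing the role of $c$). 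You correctly flag both difficulties, but as written the proposal does not prove the lemma; you should either carry out the derivative comparison from \cite{zhang2016first} or simply cite the result, as the paper does.
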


\section{Tangent space conjugate gradient algorithm}
\label{sec:tscg}
In this section, we introduce the tangent-space conjugate gradient method \citep{trefethen2022numerical,boumal2023introduction}, which is used in Section \ref{sec:appr hyperg}.
\begin{algorithm}[ht]
\caption{Tangent Space Conjugate Gradient $v^{n}={\rm TSCG}(\gH_y g(x, \hat{y}), \gG_y f(x, \hat{y}), v^0, \epsilon_v$)}
\label{alg:cg}
\begin{algorithmic}[1]
\State Let $r^0 = \gG_y f(x, \hat{y}) \in T_y\gM_y, p^0 = r^0$.
\While{$\|\gH_y g(x, \hat{y})[v^n] - \gG_y f(x, \hat{y})\|_{\hat{y}} > \epsilon_v$}
\State Compute $\gH_y g(x, \hat{y}) [p^{n}]$.
\State $a^{n+1} = \frac{\|r^n \|^2_{\hat{y}}}{\langle p^n, \gH_y g(x, \hat{y}) [p^n] \rangle_{\hat{y}}}$,
\State $v^{n+1} = v^n + a^{n+1} p^n$,
\State $r^{n+1} = r^n - a^{n+1} \gH_y g(x, \hat{y}) [p^n]$, 
\State $b^{n+1} = \frac{\| r^{n+1}\|_{\hat{y}}^2}{\| r^n\|_{\hat{y}}^2}$,
\State $p^{n+1} = r^{n+1} + b^{n+1} p^n$,
\State $n = n+1$.
\EndWhile
\end{algorithmic} 
\end{algorithm}

\section{Additional results of Section \ref{sec:retr}}
\label{sec:addi of retr}
\begin{algorithm}[ht]
\caption{AdaRHD with Retraction (AdaRHD-R)}
\label{alg:retr}
\begin{algorithmic}[1]
\State Initial points $x_0 \in \gM_x, y_0 \in \gM_y$, and $v_0\in T_{y_0}\gM$, initial step sizes $a_0 > 0$, $b_0 > 0$, and $c_0 > 0$, total iterations $T$, error tolerances $\epsilon_y = \epsilon_v = \frac{1}{T}$.
\For{$t=0,1,2,...,T-1$}
\State{Set $k = 0$ and $y_{t}^0 = y_{t-1}^{K_{t-1}}$ if $t > 0$ and $y_0$ otherwise.}
\While{$\|\gG_y g(x_t, y_t^k)\|_{y_t^k}^2 > {\epsilon_y}$}
\State $ b_{k+1}^2 =  b_k^2 + \|\gG_y g(x_t, y_t^k)\|_{y_t^k}^2$,
\State $y_t^{k+1} = \Retr_{y_t^k}(- \frac{1}{ b_{k+1}} \gG_y g(x_t, y_t^k) )$,
\State $k=k+1$.
\EndWhile
\State{$K_t = k$.}
\State Set $n = 0$ and $v_{t}^0 = \gP_{y_{t-1}^{K_{t-1}}}^{y_t^{K_t}} v_{t-1}^{N_{t-1}}$ if $t > 0$ and $v_0$ otherwise.
\While{$\|\nabla_v R(x_t, y_t^{K_t}, v_t^n)\|_{y_t^{K_t}}^2 > {\epsilon_v}$}
\State $ c_{n+1}^2 =  c_n^2 + \|\nabla_v R(x_t, y_t^{K_t}, v_t^n)\|_{y_t^{K_t}}^2$, 
\State $v_t^{n+1} = v_t^n - \frac{1}{ c_{n+1}}\nabla_v R(x_t, y_t^{K_t}, v_t^n)$,\Comment{Gradient descent}
\State $n=n+1$.
\EndWhile
\State \textbf{Or} set $v_{t}^0 = 0$ and invoke $v_{t}^{n}=\hyperref[alg:cg]{{\rm TSCG}}(\gH_y g(x_t, y_t^{K_t}), \gG_y f(x_t, y_t^{K_t}), v_t^0, \epsilon_v).$\Comment{Conjugate gradient}
\State{$N_t = n$.}
\State{$\widehat{\gG} F(x_t, y_t^{K_t}, v_t^{N_t}) = \gG_x f(x_t,y_t^{K_t}) - \gG^2_{xy} g(x_t,y_t^{K_t})[v_t^{N_t}]$,}
\State{$ a_{t+1}^2 =  a_t^2 + \|\widehat{\gG} F(x_t, y_t^{K_t}, v_t^{N_t})\|_{x_t}^2$,} 
\State{$x_{t+1} = \Retr_{x_t}( - \frac{1}{ a_{t+1}} \widehat{\gG} F(x_t, y_t^{K_t}, v_t^{N_t}))$.}
\EndFor
\end{algorithmic} 
\end{algorithm}

Since the retraction mapping only affects the total number of iterations required for the lower-level problem in Problem \eqref{p:primal} to converge, without influencing the error bounds, the hypergradient approximation error bound (Lemma \ref{lem:hgerror}) remains consistent with that in Algorithm \ref{alg:AdaRHD}. Moreover, analogous to Proposition \ref{prop:KtNt}, we establish the following upper bounds on the total number of iterations required to solve the lower-level problem in Problem \eqref{p:primal} and the linear system \eqref{equ:defR}.
\begin{proposition}
\label{prop:KtNt:retr}
Suppose that Assumptions \ref{ass:curvature}, \ref{ass:str}, \ref{ass:lips}, \ref{ass:inf}, \ref{ass:bar D}, and \ref{ass:retrerror} hold. Then, for any $0\le t \le T$, the iterations $K_t$ and $N_t$ required in Algorithm \ref{alg:retr} satisfy:

\begin{equation*}
K_t \le \frac{\log(C_b^2/ b_0^2)}{\log(1+\epsilon_y/C_b^2)} + \frac{1}{({\mu}/{ \bar{b}} - \bar{\zeta} {L_{g}^2}/{\bar{b}^2})}\log\left(\frac{\tilde{b}}{\epsilon_y}\right).
\end{equation*}
AdaRHD-GD:
\begin{equation*}
N_t \le \frac{\log(C_c^2/ c_0^2)}{\log(1+\epsilon_v/C_c^2)} + \frac{ \bar{c}}{\mu}\log\left(\frac{\tilde{c}}{\epsilon_v}\right),
\end{equation*}
where $C_b, C_c$, $ \bar{b}$, $ \bar{c}$, $\tilde{b}$, $\tilde{c}$, and $\bar{\zeta}$ are constants defined in Appendix \ref{sec:appe:proofs of retr}.

AdaRHD-CG:
\begin{equation*}
N_t \le \frac{1}{2}{ \log\left( \frac{4 L_g^3 l_f^2}{\mu^3 \epsilon_v} \right) }/{ \log\left( \frac{ \sqrt{L_{g}/\mu} + 1 }{ \sqrt{L_{g}/\mu} - 1 } \right) }.
\end{equation*}
\end{proposition}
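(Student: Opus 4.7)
The plan is to adapt the proof of Proposition \ref{prop:KtNt} to the retraction-based Algorithm \ref{alg:retr}. Since the auxiliary linear system is solved entirely in the Euclidean tangent space $T_{y_t^{K_t}}\gM_y$ (no retraction is applied to $v_t^n$), the bounds on $N_t$ for both the AdaRHD-GD and AdaRHD-CG variants are structurally identical to those in Proposition \ref{prop:KtNt}: the GD case follows the standard AdaGrad-Norm two-stage analysis of \cite{xie2020linear} applied to the strongly convex quadratic $R(x_t, y_t^{K_t},\cdot)$, and the CG case follows the classical linear-convergence rate for strongly convex quadratics with condition number $L_g/\mu$ together with an initial residual bound from Assumption \ref{ass:lips}. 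Hence only the bound on $K_t$ requires genuine re-derivation.

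For $K_t$, I would first establish a retraction-based descent inequality for $g(x_t,\cdot)$. Writing $u_k := -\frac{1}{b_{k+1}} \gG_y g(x_t, y_t^k)$ and $y_t^{k+1} = \Retr_{y_t^k}(u_k)$, Assumption \ref{ass:retrerror} gives $d(y_t^k,y_t^{k+1}) \le \sqrt{c_u}\|u_k\|_{y_t^k}$ and $\|\Exp^{-1}_{y_t^k}(y_t^{k+1}) - u_k\|_{y_t^k} \le c_R\|u_k\|_{y_t^k}^2$. Combining these with the Lipschitz smoothness of $\gG_y g$ via Proposition \ref{prop:lsmoothmustrong} would yield a descent bound of the form
\[
g(x_t, y_t^{k+1}) \le g(x_t, y_t^k) - \left(\frac{1}{b_{k+1}} - \frac{c_u L_g}{2 b_{k+1}^2} - \frac{l_g c_R}{b_{k+1}^2}\right) \|\gG_y g(x_t, y_t^k)\|_{y_t^k}^2,
\]
where $l_g$ is a Lipschitz bound on $g$ over the bounded iterate set guaranteed by Assumption \ref{ass:bar D}. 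The retraction error terms (those involving $c_u$ and $c_R$) disappear into higher-order corrections once $b_{k+1}$ is sufficiently large, so the structure of the exponential-mapping analysis is preserved.

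Next, I would merge this descent estimate with $\mu$-geodesic strong convexity of $g(x_t,\cdot)$ using the trigonometric distance bound of Lemma \ref{lem:trig}. Under Assumption \ref{ass:bar D}, $d(y_t^k, y^*(x_t)) \le \bar D$ for all $k$, so $\zeta(\tau, d(y_t^k, y^*(x_t))) \le \bar\zeta := \zeta(\tau, \bar D)$ uniformly. Following the AdaGrad-Norm template of \cite{xie2020linear,yang2024tuning}, the analysis then splits into two stages. In Stage 1, the step size $1/b_{k+1}$ may still be too large for contraction, but the cumulative squared-gradient quantity $b_{k+1}^2 - b_0^2$ cannot exceed a problem-dependent ceiling $C_b^2$ that absorbs $\bar\zeta$, $L_g$, $c_u$, $c_R$, $\mu$ and $\bar D$; since each inner iteration adds at least $\epsilon_y$ to $b_{k+1}^2$, Stage 1 terminates in at most $\log(C_b^2/b_0^2)/\log(1+\epsilon_y/C_b^2)$ steps. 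In Stage 2, one obtains a contraction $d^2(y_t^{k+1}, y^*(x_t)) \le (1 - \mu/\bar b + \bar\zeta L_g^2/\bar b^2)\, d^2(y_t^k, y^*(x_t))$, with $\bar b$ an a priori upper bound on $b_{k+1}$ also depending on $\bar\zeta$; a standard geometric argument produces the $\log(\tilde b/\epsilon_y)/(\mu/\bar b - \bar\zeta L_g^2/\bar b^2)$ term stated in the proposition.

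The main obstacle will be bookkeeping the retraction error in the descent lemma and verifying that the Stage 1 ceiling $C_b$ and the Stage 2 upper bound $\bar b$ can be chosen large enough to absorb the additional $c_u$ and $c_R$ contributions without degrading the asymptotic rate, so that the final bound has the same functional form as in Proposition \ref{prop:KtNt} but with $\zeta$ replaced by $\bar\zeta$. A secondary technical point is ensuring that the iterates $y_t^k$ remain inside the domain $\gU_y$ in which Assumption \ref{ass:lips} holds; this is handled implicitly by Assumption \ref{ass:bar D}, which is precisely the reason it is stated in terms of $d(y_t^k, y^*(x_t))$ rather than in terms of sectional curvature as in Assumption \ref{ass:curvature}.
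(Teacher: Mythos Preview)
Your treatment of $N_t$ is correct and matches the paper exactly: the linear system lives in a fixed tangent space, no retraction enters, and the bounds carry over verbatim from Proposition \ref{prop:KtNt}.

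For $K_t$, your overall two-stage AdaGrad-Norm structure is right, but two points diverge from the paper's argument. First, the paper does \emph{not} route through a descent lemma on $g(x_t,\cdot)$; instead it works directly with the squared distance $d^2(y_t^k, y^*(x_t))$ by applying the trigonometric bound (Lemma \ref{lem:trig}) to the triple $(y_t^{k+1}, y_t^k, y^*(x_t))$ and handling the retraction error inside that inequality. Concretely, one writes $\langle \Exp^{-1}_{y_t^k}(y_t^{k+1}), \Exp^{-1}_{y_t^k}(y^*(x_t))\rangle$ as the sum of the ``intended'' term $\langle u_k, \Exp^{-1}_{y_t^k}(y^*(x_t))\rangle$ and a retraction remainder bounded via Assumption \ref{ass:retrerror} and Assumption \ref{ass:bar D}, then applies strong convexity to the intended term. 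This yields the one-step contraction directly (the paper records it as Lemma \ref{lem:conv of lower:retr}), avoiding your detour through function values and the auxiliary Lipschitz constant $l_g$ (which is not assumed).

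Second, and more substantively, your identification $\bar\zeta := \zeta(\tau,\bar D)$ is not what the paper uses. Assumption \ref{ass:curvature} already provides a uniform bound $\zeta$ on the curvature constant, so $\zeta(\tau,\bar D)\le \zeta$ adds nothing. The paper's $\bar\zeta$ is instead the \emph{effective} coefficient that emerges once the two retraction corrections are folded into the coefficient of $\|\gG_y g(x_t,y_t^k)\|^2/b_{k+1}^2$ in the distance recursion: the $c_u$ factor from $d^2(y_t^k,y_t^{k+1}) \le c_u\|u_k\|^2$, and a $2\bar D c_R$ term from bounding $\langle \Exp^{-1}_{y_t^k}(y_t^{k+1}) - u_k, \Exp^{-1}_{y_t^k}(y^*(x_t))\rangle$. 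This gives $\bar\zeta = \zeta c_u + 2\bar D c_R$, after which the Stage 1 and Stage 2 bounds follow exactly as in Proposition \ref{prop:KtNt} with $\zeta$ replaced by $\bar\zeta$ throughout (and $C_b$, $\bar b$, $\tilde b$ redefined accordingly). If you carry out your trigonometric-bound step carefully you will arrive at the same $\bar\zeta$, but the descent-on-$g$ inequality you wrote down is not the mechanism that produces it.
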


\section{Extension to Riemannian min-max Problems}
\label{sec:minmax}

Riemannian min-max problems \cite{jordan2022first,huang2023gradient,han2023riemannian,zhang2023sion,wang2023riemannian,han2023nonconvex,martinez2023accelerated,hu2024extragradient} have recently attracted increasing attention. 
The general form of such problems is
\begin{equation}
\label{p:minmax}
\min_{x \in \gM_x} \max_{y \in \gM_y} f(x,y),
\end{equation}
which can be regarded as a special case of the Riemannian bilevel optimization problem \eqref{p:primal} with $g(x,y) = -f(x,y)$.

If $f$ is geodesically strongly convex with respect to $y$ over $\gM_y$, 
the Riemannian hypergradient \eqref{equ:hygra} reduces to $\gG F(x) = \gG_x f(x, y^*(x))$, 
and the approximate Riemannian hypergradient becomes $\widehat{\gG}F(x,\hat{y}) = \gG_x f(x, \hat{y})$. 
The pseudocode of the adaptive method for solving Problem \eqref{p:minmax} is summarized in Algorithm \ref{alg:minmax}.

\begin{algorithm}[ht]
\caption{AdaRHD for Riemannian Min-Max Optimization}
\label{alg:minmax}
\begin{algorithmic}[1]
\State \textbf{Input:} initial points $x_0 \in \gM_x$, $y_0 \in \gM_y$; initial step sizes $a_0 > 0$, $b_0 > 0$; total iterations $T$; tolerance $\epsilon_y = \frac{1}{T}$.
\For{$t = 0,1,2,\ldots,T-1$}
    \State Set $k = 0$ and initialize $y_t^0 = y_{t-1}^{K_{t-1}}$ if $t > 0$, otherwise $y_t^0 = y_0$.
    \While{$\|\gG_y g(x_t, y_t^k)\|^2 > \epsilon_y$}
        \State Update $b_{k+1}^2 =  b_k^2 + \|\gG_y g(x_t, y_t^k)\|_{y_t^k}^2$.
        \State $y_t^{k+1} = \Exp_{y_t^k}\!\left(- \frac{1}{b_{k+1}} \gG_y g(x_t, y_t^k)\right)$,
        \Statex \hspace{2.1cm} \textbf{or} $y_t^{k+1} = \Retr_{y_t^k}\!\left(- \frac{1}{b_{k+1}} \gG_y g(x_t, y_t^k)\right).$ \Comment{retraction step}
        \State $k \gets k+1$.
    \EndWhile
    \State $K_t \gets k$.
    \State Compute $\widehat{\gG} F(x_t, y_t^{K_t}) = \gG_x f(x_t,y_t^{K_t})$.
    \State Update $a_{t+1}^2 =  a_t^2 + \|\widehat{\gG} F(x_t, y_t^{K_t})\|_{x_t}^2$.
    \State $x_{t+1} = \Exp_{x_t}\!\left(- \frac{1}{a_{t+1}} \widehat{\gG} F(x_t, y_t^{K_t})\right)$,
    \Statex \hspace{2.1cm} \textbf{or} $x_{t+1} = \Retr_{x_t}\!\left(- \frac{1}{a_{t+1}} \widehat{\gG} F(x_t, y_t^{K_t})\right).$ \Comment{retraction step}
\EndFor
\end{algorithmic} 
\end{algorithm}

Before establishing the convergence result of Algorithm \ref{alg:minmax}, 
we present the required assumptions.

\begin{assumption}
\label{ass:minmax}
The following conditions hold:
\begin{enumerate}[(i)]
\item Assumptions \ref{ass:curvature} and \ref{ass:inf} are satisfied 
(Assumptions \ref{ass:bar D} and \ref{ass:retrerror} are required when retraction is used);
\item $f(x,y)$ is continuously differentiable, and $-f(x,y)$ is geodesically strongly convex with respect to $y$;
\item $f(x,y)$ is $l_f$-Lipschitz continuous, and its Riemannian gradients $\gG_x f(x,y)$ and $\gG_y f(x,y)$ are $L_f$-Lipschitz continuous.
\end{enumerate}
\end{assumption}

We now state the convergence guarantee of Algorithm \ref{alg:minmax}.

\begin{theorem}
\label{thm:minmax}
Suppose that Assumption \ref{ass:minmax} holds. Then, the sequence $\{x_t\}_{t=0}^T$ generated by Algorithm \ref{alg:minmax} satisfies
\[
\frac{1}{T}\sum_{t=0}^{T-1}\| \gG F(x_t) \|^2_{x_t} \le \frac{C_{mm}}{T} = \mathcal{O}\!\left(\frac{1}{T}\right),
\]
where $C_{mm}$ is a constant depending on the curvature, Lipschitz, and strong convexity parameters.

Moreover, the overall gradient complexity of Algorithm \ref{alg:minmax} is $\mathcal{O}(1/\epsilon^2)$.
\end{theorem}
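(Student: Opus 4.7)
The plan is to recognize Problem \eqref{p:minmax} as the special instance of Problem \eqref{p:primal} with $g(x,y) = -f(x,y)$ and then to transfer the machinery already developed for AdaRHD. The key simplification is that, because $y^*(x) = \argmax_y f(x,y)$ is an interior maximizer on $\gM_y$, first-order optimality on manifolds gives $\gG_y f(x, y^*(x)) = 0$, so the bilevel hypergradient \eqref{equ:hygra} collapses to $\gG F(x) = \gG_x f(x, y^*(x))$ and its approximation to $\widehat{\gG} F(x_t, y_t^{K_t}) = \gG_x f(x_t, y_t^{K_t})$. Consequently, no linear-system subproblem is needed, Algorithm \ref{alg:minmax} reduces to an adaptive outer Riemannian gradient step coupled with an adaptive inner Riemannian gradient ascent, and all constants and iterates involving $v$, $c_n$, $N_t$, $l_f\rho/\mu^2$ disappear from the analysis.

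First I would establish the Lipschitz continuity of the solution map $y^*(\cdot)$ and of $\gG F$ via the Riemannian implicit function theorem (as used in Proposition \ref{prop:hyperg}), together with Assumption \ref{ass:minmax}; geodesic strong concavity of $f$ in $y$ guarantees invertibility of $\gH_y f(x, y^*(x))$ and yields a Lipschitz constant for $\gG F$ depending only on $\mu$, $L_f$, $l_f$, and the curvature constant $\zeta$. Then I would prove a simplified analogue of Lemma \ref{lem:hgerror}, namely
\[
\|\widehat{\gG} F(x_t, y_t^{K_t}) - \gG F(x_t)\|_{x_t} \le L_f\, d(y_t^{K_t}, y^*(x_t)),
\]
which follows directly from the $L_f$-Lipschitz continuity of $\gG_x f$ in $y$ (Assumption \ref{ass:minmax}(iii)) and parallel transport, without needing any $\|v - \hat v^*\|$ term.

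With these tools in place, the inner loop reduces to applying Proposition \ref{prop:KtNt} to the strongly concave problem $\max_y f(x_t, \cdot)$; since the curvature, smoothness, and strong concavity parameters of $-f$ match those posited in Assumption \ref{ass:minmax}, the same two-stage AdaGrad-Norm argument yields $K_t = \mathcal{O}(1/\epsilon_y)$ with $\epsilon_y = 1/T$. The outer analysis then follows the template of Theorem \ref{thm:conv}: invoke the descent inequality for $F$ on $\gM_x$ (Proposition \ref{prop:lsmoothmustrong}), split the inner product $\langle \widehat{\gG} F(x_t, y_t^{K_t}), \Exp_{x_t}^{-1}(x_{t+1})\rangle_{x_t}$ into a true hypergradient contribution and an error term controlled by the simplified lemma above, sum over $t$, and apply the standard AdaGrad-Norm telescoping in $1/a_{t+1}$ to obtain $\sum_{t=0}^{T-1}\|\gG F(x_t)\|_{x_t}^2 \le C_{mm}$, which gives the announced $\mathcal{O}(1/T)$ rate. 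The retraction variant is handled exactly as in Theorem \ref{thm:conv:retr} by adding the $c_u, c_R$ corrections from Assumption \ref{ass:retrerror}.

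The main obstacle I anticipate is purely bookkeeping: verifying that the second-order Lipschitz hypotheses on $\gH_y g$, $\gG^2_{xy} g$, $\gG^2_{yx} g$ used in Assumption \ref{ass:lips} are genuinely not needed in the min-max reduction because every appearance of $v$ and of $\gG^2_{xy} g$ vanishes, and hence that Assumption \ref{ass:minmax} (which only demands first-order Lipschitz data) truly suffices for all constants in $C_{mm}$. Once this accounting is settled, the gradient complexity is immediate: $T = \mathcal{O}(1/\epsilon)$ outer iterations, each requiring $K_t = \mathcal{O}(1/\epsilon_y) = \mathcal{O}(1/\epsilon)$ evaluations of $\gG_y f$ and $\mathcal{O}(1)$ evaluations of $\gG_x f$, giving the claimed total of $\mathcal{O}(1/\epsilon^2)$.
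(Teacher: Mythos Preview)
Your proposal is correct and follows exactly the route the paper intends: the paper presents Theorem \ref{thm:minmax} as an immediate specialization of Theorems \ref{thm:conv} and \ref{thm:conv:retr} to the case $g=-f$, noting explicitly that $\gG F(x)=\gG_x f(x,y^*(x))$ and $\widehat{\gG}F(x,\hat y)=\gG_x f(x,\hat y)$, and does not supply a separate proof. Your bookkeeping observation is also on target---the second-order Lipschitz constants $\rho$ drop out because the Lipschitz bound for $\gG F$ reduces to $\|\gP_{x_1}^{x_2}\gG_x f(x_1,y^*(x_1))-\gG_x f(x_2,y^*(x_2))\|_{x_2}\le L_f(1+L_f/\mu)\,d(x_1,x_2)$, using only Assumption \ref{ass:minmax}(iii); one small imprecision is that this $L_F$ involves neither $l_f$ nor $\zeta$ (those enter $C_{mm}$ only through the inner-loop bound $K_t$ and the boundedness of $\widehat{\gG}F$).
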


\section{Preliminary results for proofs}
\label{sec:appe:pre}
\begin{lemma}\cite[Lemma 3.2]{ward2020adagrad}
\label{lem:sum<log}
For any non-negative $\alpha_1,..., \alpha_T$, and $\alpha_1 \ge 1$, we have
\begin{equation*}
\sum_{l=1}^T\frac{\alpha_l}{\sum_{i=1}^l \alpha_i} \le \log\left(\sum_{l=1}^T \alpha_l\right)+1.
\end{equation*}
\end{lemma}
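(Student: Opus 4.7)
The plan is a standard integral-comparison argument. I would set $S_l := \sum_{i=1}^l \alpha_i$ with $S_0 := 0$, so that $\alpha_l = S_l - S_{l-1}$ and the left-hand side becomes $\sum_{l=1}^T (S_l - S_{l-1})/S_l$. Because $\alpha_1 \ge 1$, every $S_l$ for $l \ge 1$ is at least $1$, so all fractions are well-defined and $\log(S_T) \ge 0$. The strategy is to upper bound each summand for $l \ge 2$ by a piece of the logarithmic integral $\int ds/s$, and to treat $l=1$ separately since $S_0 = 0$ makes the integral improper at that endpoint.

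For the main estimate, I would use that $s \mapsto 1/s$ is decreasing on $(0,\infty)$: for any $l \ge 2$ and $s \in [S_{l-1}, S_l]$, we have $1/s \ge 1/S_l$, hence
\begin{equation*}
\frac{\alpha_l}{S_l} \;=\; \frac{S_l - S_{l-1}}{S_l} \;\le\; \int_{S_{l-1}}^{S_l} \frac{ds}{s}.
\end{equation*}
If $\alpha_l = 0$, then $S_{l-1} = S_l$ and both sides vanish, so the inequality still holds. Summing this bound from $l=2$ to $T$ gives a telescoping integral equal to $\log(S_T) - \log(S_1) = \log(S_T) - \log(\alpha_1)$. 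The $l=1$ contribution is simply $\alpha_1 / S_1 = 1$.

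Combining the two pieces yields $\sum_{l=1}^T \alpha_l / S_l \le 1 + \log(S_T) - \log(\alpha_1)$. Finally, the hypothesis $\alpha_1 \ge 1$ implies $-\log(\alpha_1) \le 0$, and I would drop this nonpositive term to conclude $\sum_{l=1}^T \alpha_l/S_l \le \log(S_T) + 1 = \log\!\bigl(\sum_{l=1}^T \alpha_l\bigr) + 1$. There is no real obstacle in the argument; the only small subtlety is noticing that the assumption $\alpha_1 \ge 1$ serves precisely to absorb the $\log$ of the initial partial sum into the additive constant $1$, justifying why that particular normalization appears in the statement.
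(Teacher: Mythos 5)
Your argument is correct. Note that the paper does not prove this lemma at all --- it is quoted directly from Ward et al.\ (2020, Lemma~3.2) --- and your integral-comparison proof is essentially the standard argument behind that cited result: the bound $\frac{S_l - S_{l-1}}{S_l} \le \int_{S_{l-1}}^{S_l} \frac{ds}{s}$ telescopes to $\log(S_T) - \log(S_1)$, the $l=1$ term contributes exactly $1$, and the hypothesis $\alpha_1 \ge 1$ makes $-\log(S_1) \le 0$. You also correctly handle the degenerate case $\alpha_l = 0$ and the reason the first term must be split off, so there is nothing to add.
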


\begin{lemma}
\label{lem:basicy*GF}
Suppose that Assumptions \ref{ass:curvature}, \ref{ass:str}, and \ref{ass:lips} holds. Then, the following statements hold,
\begin{enumerate}[(1)]
\item For $x_1,x_2\in \gM_x$, it holds that
\[
d(y^*(x_1),y^*(x_2)) \le L_y d(x_1,x_2),
\]
where $L_y:=\frac{L_{g}}{\mu}$ and $y^*(x)$ is the optimal solution of the lower-level problem in Problem \eqref{p:primal};\label{lem:basicy*GFy*x}

\item The Riemannian hypergradient $\gG F(x)$ satsifies $\|\gG F(x)\|_{x} \le l_{f}(1 + L_{g}/\mu)$, and for $x_1,x_2\in \gM_x$, it holds that 
\[
\|\gG F(x_1) - \gP_{x_2}^{x_1}\gG F(x_2)\|_{x_1} \le L_{F}d(x_1,x_2),
\]
where $L_{F} := \left(L_{f}+\frac{\rho l_{f}}{\mu}\right)\left(1+\frac{L_{g}}{\mu}\right)^2$; \label{lem:basicy*GFGFx}
\end{enumerate}
\end{lemma}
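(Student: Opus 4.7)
The plan for part (1) is to combine the optimality conditions $\gG_y g(x_i, y^*(x_i)) = 0$ with $\mu$-geodesic strong convexity of $g(x_2,\cdot)$ and the Lipschitzness of $\gG_y g$ in $x$. Applying \eqref{equ:strongconv} to $g(x_2,\cdot)$ at the pair $(y^*(x_1), y^*(x_2))$ in both directions and using $\gG_y g(x_2, y^*(x_2))=0$, a Cauchy--Schwarz step yields the standard bound $\|\gG_y g(x_2, y^*(x_1))\|_{y^*(x_1)} \ge \mu\, d(y^*(x_1), y^*(x_2))$. Since $\gG_y g(x_1, y^*(x_1)) = 0$, the left-hand side equals $\|\gG_y g(x_2, y^*(x_1)) - \gG_y g(x_1, y^*(x_1))\|_{y^*(x_1)}$, which is at most $L_g\, d(x_1,x_2)$ by Assumption~\ref{ass:lips}. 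Rearranging gives $L_y = L_g/\mu$.

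For part (2), the boundedness of $\gG F$ follows from the triangle inequality applied to~\eqref{equ:hygra}, using $\|\gG_x f\|_x, \|\gG_y f\|_y \le l_f$ from the $l_f$-Lipschitzness of $f$; the operator-norm bound $\|\gG^2_{xy} g(x,y)\|_x \le L_g$ implied by the $L_g$-Lipschitzness of $\gG_y g$ in $x$; and $\|\gH_y^{-1} g\|_y \le 1/\mu$ from strong convexity. For the Lipschitzness of $\gG F$, the plan is to decompose
\[
\gG F(x_1) - \gP_{x_2}^{x_1}\gG F(x_2) = \bigl[\gG_x f(x_1,y^*(x_1)) - \gP_{x_2}^{x_1}\gG_x f(x_2,y^*(x_2))\bigr] - \bigl[A_1 H_1^{-1}[b_1] - \gP_{x_2}^{x_1} A_2 H_2^{-1}[b_2]\bigr],
\]
where I abbreviate $A_i = \gG^2_{xy} g(x_i,y^*(x_i))$, $H_i = \gH_y g(x_i,y^*(x_i))$, and $b_i = \gG_y f(x_i,y^*(x_i))$. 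The first bracket is bounded by $L_f\bigl(d(x_1,x_2)+d(y^*(x_1),y^*(x_2))\bigr) \le L_f(1+L_y)\,d(x_1,x_2)$ after inserting $(x_1,y^*(x_2))$ and applying the per-variable Lipschitzness of $\gG_x f$ from Definition~\ref{def:lip} together with part~(1). The second bracket is controlled by a three-term telescoping that isolates the variation in $A$, $H^{-1}$, and $b$ in turn, with parallel transports inserted on both the input and output tangent spaces so that each factor is compared at a common base point. The $A$- and $b$-pieces use the $\rho$-Lipschitzness of $\gG^2_{xy} g$ and the $L_f$-Lipschitzness of $\gG_y f$ respectively; the $H^{-1}$-piece uses a derived $\rho/\mu^2$-Lipschitzness of $\gH_y^{-1} g$, obtained from the identity $H_1^{-1}-H_2^{-1} = H_1^{-1}(H_2-H_1)H_2^{-1}$ together with the $\rho$-Lipschitzness of $\gH_y g$ and the $1/\mu$ bound on $\gH_y^{-1} g$. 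Each piece carries a factor $(1+L_y)$ from the joint $(x,y)$-variation and multiplies against the operator-norm bounds established in the boundedness step; summing the three pieces and factoring out $(1+L_y)^2 = (1+L_g/\mu)^2$ produces the stated constant $L_F = (L_f + \rho l_f/\mu)(1+L_g/\mu)^2$.

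The main obstacle I anticipate is the careful bookkeeping of parallel transports in the three-term telescoping, in particular verifying the $\rho/\mu^2$-Lipschitzness of $\gH_y^{-1} g$ in the Riemannian sense, where the transport acts simultaneously on both the input and output tangent spaces of $H_1^{-1}$ and $H_2^{-1}$ and must be threaded through the operator identity $H_1^{-1}-H_2^{-1} = H_1^{-1}(H_2-H_1)H_2^{-1}$. Once this is done correctly, the remaining estimates reduce to routine combinations of the Lipschitz constants from Assumption~\ref{ass:lips} and the strong convexity from Assumption~\ref{ass:str}. Notably, no curvature constant $\zeta$ enters this lemma, because all estimates rely only on the Lipschitz and strong-convexity assumptions, which already encode parallel transport through Definition~\ref{def:lip} and Proposition~\ref{prop:lsmoothmustrong}.
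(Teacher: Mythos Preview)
Your proposal is correct. For part~(2), your approach is essentially the same as the paper's: the same decomposition into a $\gG_x f$ piece plus a three-term telescoping for $A H^{-1}[b]$, identical parallel-transport bookkeeping, and the same operator identity $H_1^{-1}-H_2^{-1}=H_1^{-1}(H_2-H_1)H_2^{-1}$ to obtain the $\rho/\mu^2$ constant for the Hessian-inverse term.

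For part~(1), however, you take a genuinely different and more elementary route. The paper proceeds via the Riemannian implicit function theorem: it takes a minimizing geodesic $c_2$ in $\gM_x$ from $x_1$ to $x_2$, considers the curve $c_1(t)=y^*(c_2(t))$ in $\gM_y$, uses the formula $Dy^*(x)=-\gH_y^{-1}g(x,y^*(x))\,\gG^2_{yx}g(x,y^*(x))$ to bound $\|c_1'(t)\|\le(L_g/\mu)\|c_2'(t)\|$ pointwise, and integrates lengths. This requires the differentiability of $y^*$ and the explicit form of its differential. Your argument instead invokes only the first-order optimality conditions, the strong-convexity inequality~\eqref{equ:strongconv}, and the $x$-Lipschitzness of $\gG_y g$, bypassing the implicit function theorem entirely. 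Your route is cleaner and works under marginally weaker regularity (only $C^1$ in $y$ is needed at the two points, not the $C^2$ structure behind the implicit function theorem); the paper's route, on the other hand, reuses machinery already required for Proposition~\ref{prop:hyperg}, so the differential of $y^*$ is available anyway.
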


\begin{proof}
The proofs of Lemma \ref{lem:basicy*GF} can be derived from \cite[Lemma 2.2]{ghadimi2018approximation}, here we provide them for completeness.

\eqref{lem:basicy*GFy*x}: For $x_1,x_2\in\gM_x$, there exist two geodesics $c_1: [0,1] \rightarrow \gM_y$ and $ c_2: [0,1] \rightarrow \gM_x$ such that $c_1(t) = y^*(c_2(t))$ for $t\in[0,1]$, and $ c_2(0) = x_1, c_2(1) = x_2$. Then, we have
\begin{equation*}
d(y^*(x_1), y^*(x_2)) = \int_{0}^1 \| c_1^{\prime}(t)  \|_{c_1(t)} dt
= \int_{0}^1  \| \D y^*( c_2(t)) [ c_2^{\prime}(t)] \|_{c_2(t)} dt
\le \frac{L_{g}}{\mu} \int_0^1 \| c_2^{\prime}(t) \|_{c_2(t)} dt 
= \frac{L_{g}}{\mu} d(x_1, x_2),
\end{equation*}
where the first inequality follows from $D y^*(x) := -\gH_y^{-1} g(x, y^*(x)) \gG^2_{yx} g(x,y^*(x)) $. 

\eqref{lem:basicy*GFGFx}: By the definition of $\gG F(x)$, from Assumptions \ref{ass:str} and \ref{ass:lips}, it holds that
\begingroup
\allowdisplaybreaks
\begin{align}
&\|\gG F(x)\|_{x} =  \|\gG_x f(x, y^*(x)) - \gG^2_{xy} g(x,y^*(x)) [ \gH_y^{-1} g(x, y^*(x)) [ \gG_y f(x, y^*(x)) ] ]\|_{x} \nnub\\
=& \|\gG_x f(x, y^*(x))\|_{x} + \|\gG^2_{xy} g(x,y^*(x))\|_{y^*(x)}\|\gH_y^{-1} g(x, y^*(x))\|_{y^*(x)}\|\gG_y f(x, y^*(x))\|_{y^*(x)} \le  l_{f} + \frac{L_{g}}{\mu}l_{f}.\nnub
\end{align}
\endgroup
Furthermore, we have 
\begingroup
\allowdisplaybreaks
\begin{align}
\label{equ:liphyperg}
&\| \gP_{x_1}^{x_2} \gG F(x_1) - \gG F(x_2) \|_{x_2}\nnub\\
\le& \left\| \gP_{x_1}^{x_2}\gG_x f(x_1, y^*(x_1)) - \gG_x f(x_2, y^*(x_2)) \right\|_{x_2} \nnub\\
& + \left\| \gP_{x_1}^{x_2} \gG_{xy}^2 g(x_1, y^*(x_1)) - \gG_{xy}^2 g(x_2, y^*(x_2)) \gP_{y^*(x_1)}^{y^*(x_2)} \right\|_{x_2} \left\| \gH^{-1}_y g(x_1, y^*(x_1)) \|_{y^*(x_1)} \right\|\gG_y f(x_1, y^*(x_1)) \|_{y^*(x_1)} \nnub\\
&+ \| \gG^2_{xy} g(x_2, y^*(x_2)) \|_{x_2} \left\| \gP_{y^*(x_1)}^{y^*(x_2)}  \gH^{-1}_y g(x_1, y^*(x_1)) \gP_{y^*(x_2)}^{y^*(x_1)} - \gH^{-1}_y g(x_2, y^*(x_2))\right\|_{y^*(x_2)}\| \gG_y f(x_1, y^*(x_1)) \|_{y^*(x_1)} \nnub\\
&+ \| \gG^2_{xy} g(x_2, y^*(x_2)) \|_{x_2} \| \gH^{-1}_y g(x_2, y^*(x_2))  \|_{y^*(x_2)} \left\| \gP_{y^*(x_1)}^{y^*(x_2)} \gG_y f(x_1, y^*(x_1)) - \gG_y f(x_2, y^*(x_2))\right\|_{y^*(x_2)}.
\end{align}
\endgroup
By Assumptions \ref{ass:str} and \ref{ass:lips}, for the first term of \eqref{equ:liphyperg}, we have
\begingroup
\allowdisplaybreaks
\begin{align}
&\| \gP_{x_1}^{x_2}\gG_x f(x_1, y^*(x_1)) - \gG_x f(x_2, y^*(x_2)) \|_{x_2}\nnub \\
\le& \|\gG_x f(x_1, y^*(x_1)) - \gG_x f(x_1, y^*(x_2))  \|_{x_1} + \| \gP_{x_1}^{x_2} \gG_x f(x_1, y^*(x_2)) - \gG_x f(x_2, y^*(x_2))  \|_{x_2}\nnub \\
\le& L_{f} d(y^*(x_1), y^*(x_2)) + L_{f} d(x_1, x_2) = \left(\frac{L_{f}L_{g}}{\mu} + L_{f} \right) d(x_1, x_2).\nnub
\end{align}
\endgroup
We then consider the fourth term of \eqref{equ:liphyperg}, we have
\begingroup
\allowdisplaybreaks
\begin{align}
&\left\| \gP_{y^*(x_1)}^{y^*(x_2)} \gG_y f(x_1, y^*(x_1)) - \gG_y f(x_2, y^*(x_2))\right\|_{y^*(x_2)} \nnub\\
\le& \left\|\gP_{y^*(x_1)}^{y^*(x_2)} \gG_y f(x_1, y^*(x_1)) -  \gG_y f(x_1, y^*(x_2)) \right\|_{y^*(x_2)} + \| \gG_y f(x_1, y^*(x_2)) - \gG_y f(x_2, y^*(x_2)) \|_{y^*(x_2)} \nnub\\
\le& \left(\frac{L_{f}L_{g}}{\mu} + L_{f} \right) d(x_1, x_2).\nnub
\end{align}
\endgroup
For the second term of \eqref{equ:liphyperg}, we have
\begingroup
\allowdisplaybreaks
\begin{align}
&\| \gP_{x_1}^{x_2} \gG_{xy}^2 g(x_1, y^*(x_1)) - \gG_{xy}^2 g(x_2, y^*(x_2)) \gP_{y^*(x_1)}^{y^*(x_2)} \|_{x_2} \nnub\\
\le & \| \gP_{x_1}^{x_2}\gG_{xy}^2 g(x_1, y^*(x_1))\gP_{y^*(x_2)}^{y^*(x_1)} -  \gP_{x_1}^{x_2}\gG_{xy}^2 g(x_1, y^*(x_2))\|_{x_2} + \| \gP_{x_1}^{x_2} \gG_{xy}^2 g(x_1, y^*(x_2))  - \gG_{xy}^2 g(x_2, y^*(x_2))  \|_{x_2} \nnub\\
\le& \left(\frac{\rho L_{g}}{\mu} + \rho\right) d(x_1, x_2).\nnub
\end{align}
\endgroup
For the third term of \eqref{equ:liphyperg}, we have
\begingroup
\allowdisplaybreaks
\begin{align}
&\left\| \gP_{y^*(x_1)}^{y^*(x_2)}  \gH^{-1}_y g(x_1, y^*(x_1)) \gP_{y^*(x_2)}^{y^*(x_1)}   - \gH^{-1}_y g(x_2, y^*(x_2))\right\|_{y^*(x_2)} \nnub\\
\le& \left\|\gH^{-1}_y g(x_1, y^*(x_1))  - \gH_y^{-1} g(x_2, y^*(x_1))\right\|_{y^*(x_1)} + \left\| \gP_{y^*(x_1)}^{y^*(x_2)}\gH_y^{-1} g(x_2, y^*(x_1)) \gP_{y^*(x_2)}^{y^*(x_1)}  - \gH^{-1}_y g(x_2, y^*(x_2))   \right\|_{y^*(x_2)} \nnub\\
\le& \left( \frac{\rho}{\mu^2} + \frac{\rho L_{g}}{\mu^3} \right) d(x_1, x_2),\nnub
\end{align}
\endgroup
where the last inequality follows from the operator norm satisfying
\begin{equation}
\label{equ:oper norm}
\|A^{-1} - B^{-1}\| \le \|B^{-1} (B - A) A^{-1}\| \le \|A^{-1}\| \|B^{-1}\| \|A - B\|.
\end{equation}
Combining the above inequalities, we have
\begingroup
\allowdisplaybreaks
\begin{align}
&\| \gP_{x_1}^{x_2} \gG F(x_1) - \gG F(x_2) \|_{x_2} \nnub\\
\le& \left(\frac{L_{f}L_{g}}{\mu} + L_{f} \right) d(x_1, x_2) + \frac{l_{f}}{\mu} \left(\frac{\rho L_{g}}{\mu} + \rho \right) d(x_1, x_2)\nnub \\
&+ L_{g}l_{f} \left( \frac{\rho}{\mu^2} + \frac{\rho L_{g}}{\mu^3} \right) d(x_1, x_2) + \frac{L_{g}}{\mu} \left(\frac{L_{f}L_{g}}{\mu} + L_{f} \right) d(x_1, x_2)\nnub\\
=& (\frac{L_{f}L_{g}}{\mu} + L_{f} + \frac{l_{f}}{\mu} \left(\frac{\rho L_{g}}{\mu} + \rho \right) + L_{g}l_{f} \left( \frac{\rho}{\mu^2} + \frac{\rho L_{g}}{\mu^3} \right) + \frac{L_{g}}{\mu} \left(\frac{L_{f}L_{g}}{\mu} + L_{f} \right) ) d(x_1, x_2)\nnub\\
=& \left(L_{f}+\frac{\rho l_{f}}{\mu}\right)\left(1+\frac{L_{g}}{\mu}\right)^2 d(x_1, x_2).\nnub
\end{align}
\endgroup
The proof is complete.
\end{proof}

Denote $v^*(x):=\argmin_{v \in T_{y^*(x)}\gM_y} R(x, y^*(x), v)$ and $\hat{v}^*(x, y):=\argmin_{v\in T_{y}\gM_y} R(x, y, v)$.
\begin{lemma}
\label{lem:Rproperty}
Suppose that Assumptions \ref{ass:curvature}, \ref{ass:str}, and \ref{ass:lips} hold. Then, the following statements hold,
\begin{enumerate}[(1)]
\item The objective $R(x,y,v)$ is $\mu$-strongly convex and $L_{g}$-smooth w.r.t. $v$; \label{lem:RpropertyRxyz}

\item $v^*(x)$ satisfies $\|v^*(x)\|_{y^*(x)} \le \frac{l_{f}}{\mu}$, and $\hat{v}^*(x, y)$ also satisfies $\|\hat{v}^*(x,y)\|_{y} \le \frac{l_{f}}{\mu}$; \label{lem:Rpropertyv*x}

\item For $x,x_1,x_2\in\gM_x$ and $y_1,y_2\in\gM_y$, it holds that
\[
\left\|v^*(x_1) - \gP_{y^*(x_2)}^{y^*(x_1)}v^*(x_2)\right\|_{y^*(x_1)} \le L_v d(x_1,x_2),~\text{and}~\|\hat{v}^*(x, y_1) - \gP_{y_2}^{y_1}\hat{v}^*(x, y_2)\|_{y_1} \le \hat{L}_v d(y_1,y_2),
\]
where $L_v:= (\frac{L_{f}}{\mu}+\frac{l_{f}\rho }{\mu^2})(1+L_y)$ and $\hat{L}_v := \frac{L_{f}}{\mu}+\frac{l_{f}\rho }{\mu^2}$. \label{lem:Rpropertyv*xLip}
\end{enumerate}
\end{lemma}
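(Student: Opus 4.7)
\textbf{Proof proposal for Lemma \ref{lem:Rproperty}.} The plan is to exploit the fact that $R(x,y,v)$ is purely quadratic in $v$, so its optimum admits a closed form involving the Hessian inverse, after which parts (2) and (3) reduce to operator-norm manipulations like those already used in the proof of Lemma \ref{lem:basicy*GF}.

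First, I would handle part (1). Computing the Euclidean derivative in $v$ on the tangent space $T_y\gM_y$, the gradient of $R(x,y,v)$ with respect to $v$ is $\gH_y g(x,y)[v] - \gG_y f(x,y)$ and the Hessian operator is $\gH_y g(x,y)$. By the $\mu$-geodesic strong convexity of $g$ in $y$ (Assumption \ref{ass:str}) together with the equivalence $\gH_y g \succeq \mu\,\mathrm{Id}$, and by the $L_{g}$-Lipschitzness of $\gG_y g$ (Assumption \ref{ass:lips}) together with $\gH_y g \preceq L_{g}\,\mathrm{Id}$, the operator $\gH_y g(x,y)$ lies between $\mu\,\mathrm{Id}$ and $L_{g}\,\mathrm{Id}$, which is exactly $\mu$-strong convexity and $L_{g}$-smoothness of $R(x,y,\cdot)$.

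Next, for part (2), the first-order optimality condition for the strongly convex quadratic in (1) gives the closed forms
\[
v^*(x) = \gH_y^{-1} g(x, y^*(x))[\gG_y f(x, y^*(x))], \qquad \hat v^*(x,y) = \gH_y^{-1} g(x, y)[\gG_y f(x, y)].
\]
Then $\|v^*(x)\|_{y^*(x)} \le \|\gH_y^{-1} g(x,y^*(x))\|_{y^*(x)}\,\|\gG_y f(x,y^*(x))\|_{y^*(x)} \le (1/\mu)\,l_f$, where the Hessian-inverse bound follows from $\gH_y g \succeq \mu\,\mathrm{Id}$ and the gradient bound from the $l_f$-Lipschitz continuity of $f$ in Assumption \ref{ass:lips}. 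The bound on $\hat v^*(x,y)$ is identical.

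Finally, for part (3), I would use the closed forms and split the difference through an intermediate term, in the style of the proof of Lemma \ref{lem:basicy*GF}. Writing $A_i := \gH_y^{-1} g(x_i, y^*(x_i))$ and $b_i := \gG_y f(x_i, y^*(x_i))$, I would insert $\gH_y^{-1} g(x_1,y^*(x_1))[\gP_{y^*(x_2)}^{y^*(x_1)} b_2]$ to get
\[
\|v^*(x_1) - \gP_{y^*(x_2)}^{y^*(x_1)} v^*(x_2)\|_{y^*(x_1)} \le \|A_1\|\,\|b_1 - \gP_{y^*(x_2)}^{y^*(x_1)} b_2\|_{y^*(x_1)} + \|A_1 - \gP_{y^*(x_2)}^{y^*(x_1)} A_2 \gP_{y^*(x_1)}^{y^*(x_2)}\|\,\|b_2\|_{y^*(x_2)}.
\]
The first factor is bounded by $L_f(d(x_1,x_2)+d(y^*(x_1),y^*(x_2))) \le L_f(1+L_y)\,d(x_1,x_2)$ via the Lipschitzness of $\gG_y f$ and Lemma \ref{lem:basicy*GF}(\ref{lem:basicy*GFy*x}). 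The main obstacle will be the Hessian-inverse difference; for this I would apply the operator-norm identity \eqref{equ:oper norm} together with the $\rho$-Lipschitzness of $\gH_y g$, yielding $\|A_1 - \gP_{y^*(x_2)}^{y^*(x_1)} A_2 \gP_{y^*(x_1)}^{y^*(x_2)}\| \le (\rho/\mu^2)(1+L_y)\,d(x_1,x_2)$, and the bound $\|b_2\|_{y^*(x_2)} \le l_f$. Combining gives the constant $L_v = (L_f/\mu + l_f\rho/\mu^2)(1+L_y)$. The second inequality involving $\hat v^*(x,y_1)$ and $\hat v^*(x,y_2)$ follows the same scheme with $x$ fixed, so the factor $(1+L_y)$ drops out and yields $\hat L_v = L_f/\mu + l_f\rho/\mu^2$.
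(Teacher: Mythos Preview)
Your proposal is correct and follows essentially the same approach as the paper: both identify $\nabla_v^2 R = \gH_y g$ for part (1), use the closed form $\hat v^*(x,y) = \gH_y^{-1} g(x,y)[\gG_y f(x,y)]$ with the bounds $\|\gH_y^{-1} g\| \le 1/\mu$ and $\|\gG_y f\| \le l_f$ for part (2), and for part (3) split the Hessian-inverse-vector difference into a ``same inverse, different vector'' piece and a ``different inverse'' piece handled via \eqref{equ:oper norm}, combining the Lipschitz constants with Lemma \ref{lem:basicy*GF}(\ref{lem:basicy*GFy*x}) to obtain the factor $(1+L_y)$. The paper in fact abbreviates the first half of part (3) by pointing to the analogous computation in Lemma \ref{lem:basicy*GF}(\ref{lem:basicy*GFGFx}), which is exactly the decomposition you spell out.
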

\begin{proof}
\eqref{lem:RpropertyRxyz}: Since $\nabla_v^2 R(x,y,v) = \gH_y g(x,y)$, the strong convexity of $R(x,y,v)$ follows from Assumption \ref{ass:str}. Then, by Assumption \ref{ass:lips}, for $v_1, v_2 \in T_{y}\gM_y$, we have 
\begin{equation*}
\|\nabla_v R(x,y,v_1) - \nabla_v R(x,y,v_2)\|_{y} \le \|\gH_y g(x,y)\|_{y}\|v_1 - v_2\|_{y} \le L_{g}\|v_1 - v_2\|_{y}.
\end{equation*}
\eqref{lem:Rpropertyv*x}: For $\hat{v}^*(x, y)$, we have 
\begin{equation*}
\nabla_v R\left(x,y,\hat{v}^*(x, y)\right) = \gH_y g(x,y)[\hat{v}^*(x, y)] - \gG_y f(x,y) = 0,
\end{equation*}
which demonstrates that 
\begin{equation*}
\|\hat{v}^*(x, y)\|_{y} = \left\|\gH_y^{-1} g(x,y)\gG_y f(x,y)\right\|_{y}
\le \left\|\gH_y ^{-1}g(x,y)\right\|_{y}\cdot \|\gG_y f(x,y))\|_{y} \le \frac{l_{f}}{\mu}.
\end{equation*}
Since $v^*(x)$ satisfies $v^*(x) = \hat{v}^*(x, y^*(x))$, the desired result follows.

\eqref{lem:Rpropertyv*xLip}: By the definition of $v^*(x)$, we have
\begingroup
\allowdisplaybreaks
\begin{align}
&\left\|v^*(x_1) - \gP_{y^*(x_2)}^{y^*(x_1)}v^*(x_2)\right\|_{y^*(x_1)} \nnub \\
\le &\left\|\gH_y^{-1} g(x_1, y^*(x_1)) [\gG_y f(x_1, y^*(x_1))] - \gP_{y^*(x_2)}^{y^*(x_1)}\gH_y^{-1} g(x_2, y^*(x_2)) [\gG_y f(x_2, y^*(x_2))]\right\|_{y^*(x_1)}. \nnub
\end{align}
\endgroup
The remaining proof is similar to that of Lemma \ref{lem:basicy*GF} \eqref{lem:basicy*GFGFx}, thus, we omit it.

By the definition of $\hat{v}^*(x, y)$, we have
\begingroup
\allowdisplaybreaks
\begin{align}
&\|\hat{v}^*(x, y_1) - \gP_{y_2}^{y_1}\hat{v}^*(x, y_2)\|_{y_1}
=  \left\|\gH_y^{-1} g(x,y_1)\gG_y f(x,y_1) - \gP_{y_2}^{y_1}\gH_y^{-1} g(x,y_2)\gG_y f(x,y_2)\right\|_{y_1} \nnub \\
\le & \left\|\gH_y^{-1} g(x,y_1)\left(\gG_y f(x,y_1) - \gP_{y_2}^{y_1}\gG_y f(x,y_2)\right)\right\|_{y_1} + \left\| \left(\gH_y^{-1} g(x,y_1)\gP_{y_2}^{y_1} - \gP_{y_2}^{y_1}\gH_y^{-1} g(x,y_2)\right)\gG_y f(x,y_2) \right\|_{y_1} \nnub \\
\le & \frac{L_{f}}{\mu}d(y_1,y_2) + l_{f}\frac{\rho }{\mu^2}d(y_1,y_2) \nnub,
\end{align}
\endgroup
where the second inequality follows from \eqref{equ:oper norm}.
\end{proof}

\begin{lemma}
\label{lem:yty*vtv*}
Suppose that Assumptions \ref{ass:curvature}, \ref{ass:str}, and \ref{ass:lips} hold. Then, for any $t \ge 0$, we have
\begin{equation*}
d(y_t^{K_t}, y^*(x_t))^2 \le \frac{\epsilon_y}{\mu^2}, ~\text{and}~ 
\left\|v_t^{N_t} - \hat{v}^*(x_t, y_t^{K_t})\right\|_{y_t^{K_t}}^2 \le \frac{\epsilon_v}{\mu^2}.
\end{equation*}
\end{lemma}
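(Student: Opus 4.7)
The plan is to prove each of the two bounds as an immediate consequence of (i) the stopping criteria in the inner while-loops of Algorithm~\ref{alg:AdaRHD} and (ii) the standard $\mu$-strong-convexity inequality relating the distance to the minimizer to the norm of the (Riemannian) gradient. No new adaptive-step-size analysis is needed at this stage; the work has already been done in the design of the stopping rules.

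For the first bound, I would start from the exit condition of the outer inner loop, which guarantees $\|\gG_y g(x_t, y_t^{K_t})\|_{y_t^{K_t}}^2 \le \epsilon_y$. Because $g(x_t,\cdot)$ is $\mu$-geodesically strongly convex by Assumption~\ref{ass:str}, I would invoke Proposition~\ref{prop:lsmoothmustrong} twice, once with base point $y_t^{K_t}$ and target $y^*(x_t)$, and once with the roles swapped, and add the two inequalities. Using $\gG_y g(x_t, y^*(x_t))=0$, the function values cancel and one obtains
\[
\mu\, d^2(y_t^{K_t}, y^*(x_t)) \le -\bigl\langle \gG_y g(x_t, y_t^{K_t}),\, \Exp^{-1}_{y_t^{K_t}}(y^*(x_t))\bigr\rangle_{y_t^{K_t}}.
\]
Applying Cauchy--Schwarz and $\|\Exp^{-1}_{y_t^{K_t}}(y^*(x_t))\|_{y_t^{K_t}} = d(y_t^{K_t}, y^*(x_t))$, dividing through by $d(y_t^{K_t}, y^*(x_t))$, and squaring yields $d^2(y_t^{K_t}, y^*(x_t)) \le \|\gG_y g(x_t, y_t^{K_t})\|_{y_t^{K_t}}^2/\mu^2 \le \epsilon_y/\mu^2$, as required.

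For the second bound, the situation is actually simpler because the entire analysis lives inside the linear tangent space $T_{y_t^{K_t}}\gM_y$. By Lemma~\ref{lem:Rproperty}(\ref{lem:RpropertyRxyz}), the map $v \mapsto R(x_t, y_t^{K_t}, v)$ is $\mu$-strongly convex on this Euclidean space, and $\hat{v}^*(x_t, y_t^{K_t})$ is its unique minimizer (where $\nabla_v R = 0$). I would apply the standard Euclidean strong-convexity bound $\|v - v^{\mathrm{opt}}\|^2 \le \|\nabla R(v)\|^2/\mu^2$ with $v = v_t^{N_t}$. Combined with the exit condition $\|\nabla_v R(x_t, y_t^{K_t}, v_t^{N_t})\|_{y_t^{K_t}}^2 \le \epsilon_v$ of the inner while-loop (which is also respected by the CG variant by Algorithm~\ref{alg:cg}'s stopping test, since $\nabla_v R(x_t,y_t^{K_t},v) = \gH_y g(x_t,y_t^{K_t})[v] - \gG_y f(x_t,y_t^{K_t})$), this gives the desired inequality.

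There is no real obstacle in this argument. The only subtlety worth flagging is that the first bound must avoid any Euclidean shortcut: one cannot write $\|\gG_y g(x_t, y_t^{K_t})\|\ge \mu\|y_t^{K_t}-y^*(x_t)\|$ directly, since subtraction is not defined on $\gM_y$. The two-sided strong-convexity trick using Proposition~\ref{prop:lsmoothmustrong} is the correct Riemannian substitute, and it works uniformly without requiring Assumption~\ref{ass:curvature} for the curvature constant here (since only the strong-convexity inequality is used).
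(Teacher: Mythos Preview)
Your proposal is correct and follows essentially the same approach as the paper: both combine the inner-loop stopping criteria with the $\mu$-strong-convexity bound relating distance-to-minimizer and gradient norm. The only cosmetic difference is that the paper invokes the inequality $d(y,y^*)\le \tfrac{1}{\mu}\|\gG_y g(x,y)-\gP_{y^*}^{y}\gG_y g(x,y^*)\|$ directly as a consequence of strong convexity, whereas you derive it explicitly via the two-sided application of Proposition~\ref{prop:lsmoothmustrong}; the content is identical.
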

\begin{proof}
According to the termination conditions of Algorithm \ref{alg:AdaRHD} for the lower-level problem and the linear system, we have
\begin{equation*}
\|\gG_y g(x_t, y_t^{K_t})\|_{y_t^{K_t}}^2\le\epsilon_y, ~\text{and}~ \|\nabla_v R(x_t, y_t^{K_t}, v_t^{N_t})\|_{y_t^{K_t}}^2\le\epsilon_v.
\end{equation*}
By the strong convexity of $g$ and $R$ (cf. Lemma \ref{lem:Rproperty}), we have
\begin{equation*}
d^2(y_t^{K_t}, y^*(x_t)) \le \frac{1}{\mu^2}\left\|\gG_y g(x_t, y_t^{K_t}) - \gP_{y^*(x_t)}^{y_t^{K_t}}\gG_y g\left(x_t, y^*(x_t)\right)\right\|_{y_t^{K_t}}^2 
\le \frac{\epsilon_y}{\mu^2}
\end{equation*}
and 
\begin{equation*}
\left\|v_t^{N_t} - \hat{v}^*(x_t, y_t^{K_t})\right\|_{y_t^{K_t}}^2 \le \frac{1}{\mu^2}\left\|\nabla_v R(x_t, y_t^{K_t}, v_t^{N_t}) - \nabla_v R\left(x_t, y_t^{K_t}, \hat{v}^*(x_t, y_t^{K_t})\right)\right\|_{y_t^{K_t}}^2 \le \frac{\epsilon_v}{\mu^2},
\end{equation*}
where we use the definitions $\|\gG_y g(x_t, y^*(x_t))\|^2 = 0$ and $\|\nabla_v R\left(x_t, y_t^{K_t}, \hat{v}^*(x_t, y_t^{K_t})\right)\|^2 = 0$.
\end{proof}

\begin{lemma}
\label{lem:boundapprhyperg}
Suppose that Assumptions \ref{ass:curvature}, \ref{ass:str}, and \ref{ass:lips} hold. Then, for any $t \ge 0$, we have
\[
\|\widehat \gG F(x_t, y_t^{K_t}, v_t^{N_t})\|^2 \le C_{f}^2,
\]
where $C_f := \left(\frac{2L_{g}^2\epsilon_v}{\mu^2} + \frac{4L_{g}^2l_{f}^2}{\mu^2} + 4l_{f}^2\right)^{\frac{1}{2}}$.
\end{lemma}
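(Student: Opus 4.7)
The plan is to bound $\|\widehat{\gG} F(x_t, y_t^{K_t}, v_t^{N_t})\|^2$ by splitting the approximate hypergradient around the minimizer $\hat v^*(x_t,y_t^{K_t})$ of the quadratic subproblem \eqref{equ:defR}, and then invoking the a priori bounds already established in Lemmas \ref{lem:Rproperty} and \ref{lem:yty*vtv*}.

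The first step is to use the definition
\[
\widehat{\gG} F(x_t, y_t^{K_t}, v_t^{N_t}) = \gG_x f(x_t, y_t^{K_t}) - \gG^2_{xy} g(x_t, y_t^{K_t})[v_t^{N_t}],
\]
and rewrite it by adding and subtracting $\gG^2_{xy} g(x_t, y_t^{K_t})[\hat v^*(x_t, y_t^{K_t})]$, namely
\[
\widehat{\gG} F = \bigl( \gG_x f(x_t, y_t^{K_t}) - \gG^2_{xy} g(x_t, y_t^{K_t})[\hat v^*(x_t, y_t^{K_t})] \bigr) - \gG^2_{xy} g(x_t, y_t^{K_t})\bigl[v_t^{N_t} - \hat v^*(x_t, y_t^{K_t})\bigr].
\]
Applying the elementary inequality $\|a+b\|^2 \le 2\|a\|^2 + 2\|b\|^2$ (once to split the two parts above, and once more inside the first part to separate $\gG_x f$ from $\gG_{xy}^2 g[\hat v^*]$) yields
\[
\|\widehat{\gG} F\|_{x_t}^2 \le 4\|\gG_x f(x_t, y_t^{K_t})\|_{x_t}^2 + 4\|\gG^2_{xy} g(x_t, y_t^{K_t})\|_{x_t}^2\,\|\hat v^*(x_t, y_t^{K_t})\|_{y_t^{K_t}}^2 + 2\|\gG^2_{xy} g(x_t, y_t^{K_t})\|_{x_t}^2\,\bigl\|v_t^{N_t} - \hat v^*(x_t, y_t^{K_t})\bigr\|_{y_t^{K_t}}^2.
\]

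The second step is to plug in the available quantitative bounds. The $l_f$-Lipschitz continuity of $f$ in Assumption \ref{ass:lips} gives $\|\gG_x f(x_t,y_t^{K_t})\|_{x_t} \le l_f$; the $L_{g}$-Lipschitzness of $\gG_y g$ provides $\|\gG^2_{xy} g(x_t, y_t^{K_t})\|_{x_t} \le L_{g}$; Lemma \ref{lem:Rproperty}(2) yields $\|\hat v^*(x_t, y_t^{K_t})\|_{y_t^{K_t}} \le l_f/\mu$; and Lemma \ref{lem:yty*vtv*} supplies $\|v_t^{N_t} - \hat v^*(x_t, y_t^{K_t})\|_{y_t^{K_t}}^2 \le \epsilon_v/\mu^2$. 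Assembling these four estimates produces
\[
\|\widehat{\gG} F(x_t, y_t^{K_t}, v_t^{N_t})\|_{x_t}^2 \le 4 l_f^2 + \frac{4 L_{g}^2 l_f^2}{\mu^2} + \frac{2 L_{g}^2 \epsilon_v}{\mu^2} = C_f^2,
\]
which is precisely the desired bound.

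There is no real obstacle here: the only choice that matters is decomposing $\widehat{\gG} F$ around $\hat v^*$ rather than around $0$, since bounding $\|v_t^{N_t}\|$ directly by the triangle inequality would give an extra $\epsilon_v/\mu^2$ term with the wrong constant. Once that decomposition is made, everything reduces to routine substitution of previously established bounds, so the entire argument should fit in a few lines.
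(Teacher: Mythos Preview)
Your proposal is correct and matches the paper's proof essentially line for line: the paper also splits $\widehat{\gG}F$ around $\hat v^*(x_t,y_t^{K_t})$, applies $\|a+b\|^2\le 2\|a\|^2+2\|b\|^2$ twice in the same order, and then substitutes the bounds from Assumption~\ref{ass:lips}, Lemma~\ref{lem:Rproperty}(2), and Lemma~\ref{lem:yty*vtv*} to obtain the identical constants $4l_f^2 + 4L_g^2 l_f^2/\mu^2 + 2L_g^2\epsilon_v/\mu^2$.
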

\begin{proof}
By the definition of $\widehat \gG F$, we have
\begingroup
\allowdisplaybreaks
\begin{align}
&\|\widehat \gG F(x_t, y_t^{K_t}, v_t^{N_t})\|_{x_t}^2 \nnub \\
\le& 2\left\|\widehat \gG F(x_t, y_t^{K_t}, v_t^{N_t}) - \widehat \gG F\left(x_t, y_t^{K_t}, \hat{v}^*(x_t, y_t^{K_t})\right)\right\|_{x_t}^2 + 2\left\|\widehat \gG F\left(x_t, y_t^{K_t}, \hat{v}^*(x_t, y_t^{K_t})\right)\right\|_{x_t}^2 \nnub \\
=& 2\left\|\gG_{xy}^2 g(x_t, y_t^{K_t})\left(v_t^{N_t}-\hat{v}^*(x_t, y_t^{K_t})\right)\right\|_{x_t}^2  + 2\|\gG_x f(x_t, y_t^{K_t}) - \gG_{xy}^2 g(x_t, y_t^{K_t})\hat{v}^*(x_t, y_t^{K_t})\|_{x_t}^2 \nnub \\
\le & 2\left\|\gG_{xy}^2 g(x_t, y_t^{K_t})\right\|_{x_t}^2 \|v_t^{N_t}-\hat{v}^*(x_t, y_t^{K_t})\|_{y_t^{K_t}}^2   + 4\|\gG_x f(x_t, y_t^{K_t})\|_{x_t}^2 + 4\|\gG_{xy}^2 g(x_t, y_t^{K_t})\hat{v}^*(x_t, y_t^{K_t})\|_{x_t}^2 \nnub\\
\le & \frac{2L_{g}^2\epsilon_v}{\mu^2} + \frac{4L_{g}^2l_{f}^2}{\mu^2} + 4l_{f}^2, \nnub
\end{align}
\endgroup
where the last inequality follows from Assumptions \ref{ass:curvature}, \ref{ass:str}, \ref{ass:lips}, Lemmas \ref{lem:Rproperty} and \ref{lem:yty*vtv*}.
\end{proof}

\section{Proofs of Section \ref{sec:conv}}
\label{sec:appe:proofs of arghd}
\subsection{Proof of Lemma \ref{lem:hgerror}}
\begin{proof}
By the definitions of $v_t^*(x_t)$ and $\hat{v}^*_t(x_t,y_t^{K_t})$, we have
\begingroup
\allowdisplaybreaks
\begin{align}
\label{equ:hypererror}
&\| \widehat \gG F(x_t, y_t^{K_t}, v_t^{N_t}) - \gG F(x_t)  \|_{x_t}\nnub \\
\le& \| \gG_x f(x_t, y_t^{K_t}) - \gG_x f(x_t, y^*(x_t)) \|_{x_t} + \|\gG_{xy}^2 g(x_t, y_t^{K_t} )v_t^{N_t} - \gG^2_{xy} g(x_t,y^*(x_t))v_t^*(x_t)\|_{x_t}\nnub\\
\le& L_{f} d(y_t^{K_t}, y^*(x_t)) + \|\gG_{xy}^2 g(x_t, y_t^{K_t} )\|_{x_t}\|v_t^{N_t} - \gP_{y^*(x_t)}^{y_t^{K_t}} v_t^*(x_t)\|_{y_t^{K_t}}\nnub\\
& + \|\gG_{xy}^2 g(x_t, y_t^{K_t}) - \gG_{xy}^2 g(x_t, y^*(x_t))\gP_{y_t^{K_t}}^{y^*(x_t)}\|_{x_t}\|v_t^*(x_t)\|_{y^*(x_t)} \nnub\\
\le& L_{f} d(y_t^{K_t}, y^*(x_t)) + L_{g}\|v_t^{N_t} - \gP_{y^*(x_t)}^{y_t^{K_t}}v_t^*(x_t)\|_{y_t^{K_t}} + \rho \|v_t^*(x_t)\|_{y^*(x_t)} d(y_t^{K_t}, y^*(x_t)) \nnub\\
\le& \left(L_{f} + \frac{\rho l_{f}}{\mu}\right) d(y_t^{K_t}, y^*(x_t)) + L_{g}\|v_t^{N_t} - \gP_{y^*(x_t)}^{y_t^{K_t}} v_t^*(x_t)\|_{y_t^{K_t}},
\end{align}
\endgroup
where the third and fourth inequalities follow from Assumption \ref{ass:lips} and Lemma \ref{lem:Rproperty} \eqref{lem:Rpropertyv*x}, respectively.

For $\|v_t^{N_t} - \gP_{y^*(x_t)}^{y_t^{K_t}} v_t^*(x_t)\|_{y_t^{K_t}}$, we have
\begingroup
\allowdisplaybreaks
\begin{align}
\|v_t^{N_t} - \mathcal{P}_{y_t^{K_t}}^{y^*(x_t)} v_t^*(x_t)\|_{y_t^{K_t}} 
\le & \|v_t^{N_t} - \hat{v}_t^*(x_t,y_t^{K_t})\|_{y_t^{K_t}} + \|\hat{v}_t^*(x_t,y_t^{K_t}) - \mathcal{P}_{y_t^{K_t}}^{y^*(x_t)} \hat{v}_t^*(x_t,y^*(x_t))\|_{y_t^{K_t}}\nnub\\
\le & \|v_t^{N_t} - \hat{v}_t^*(x_t,y_t^{K_t})\|_{y_t^{K_t}} + \left(\frac{L_{f}}{\mu}+\frac{l_{f}\rho }{\mu^2}\right) d(y_t^{K_t}, y^*(x_t)),\nnub
\end{align}
\endgroup
where the last inequality follows from Lemma \ref{lem:Rproperty} \eqref{lem:Rpropertyv*xLip} and the fact that $v_t^*(x_t) = \hat{v}_t^*(x_t,y^*(x_t))$.

Combining this result with \eqref{equ:hypererror}, we obtain
\begingroup
\allowdisplaybreaks
\begin{align}
&\| \widehat \gG F(x_t, y_t^{K_t}, v_t^{N_t}) - \gG F(x_t)  \|_{x_t} \le  \left(L_{f} + \frac{\rho l_{f}}{\mu}\right) d(y_t^{K_t}, y^*(x_t)) + L_{g}\|v_t^{N_t} - \gP_{y^*(x_t)}^{y_t^{K_t}} v_t^*(x_t)\|_{y_t^{K_t}}\nnub\\
\le & \left(L_{f} + \frac{\rho l_{f}}{\mu}\right) d(y_t^{K_t}, y^*(x_t)) + L_{g}\left(\|v_t^{N_t} - \hat{v}_t^*(x_t,y_t^{K_t})\|_{y_t^{K_t}} + \left(\frac{L_{f}}{\mu}+\frac{l_{f}\rho }{\mu^2}\right) d(y_t^{K_t}, y^*(x_t))\right)\nnub\\
= & \left(L_{f} + \frac{l_f \rho}{\mu} + L_{g}\left(\frac{L_f}{\mu} + \frac{l_f \rho}{\mu^2}\right)\right) d(y_t^{K_t}, y^*(x_t)) + L_{g}\|v_t^{N_t} - \hat{v}_t^*(x_t,y_t^{K_t})\|_{y_t^{K_t}}\nnub.
\end{align}
\endgroup
The proof is complete.
\end{proof}

\subsection{Proof of Proposition \ref{prop:KtNt}}
Inspired by Proposition 1 in \cite{yang2024tuning}, we first give a result that concerns the step sizes $ a_t$, $ b_k$, and $ c_n$.
\begin{proposition}
\label{prop:TKN}
Suppose that Assumptions \ref{ass:curvature}, \ref{ass:str}, and \ref{ass:lips} hold. Denote $\{T,K,N\}$ as the iterations of $\{x,y,v\}$. Given any constants $C_a \ge a_0$, $C_b \ge b_0$, $C_c \ge c_0$, then, we have

AdaRHD-GD:
\begin{enumerate}[(1)]
\item either $ a_t \le C_a$ for any $t \le T$, or $\exists t_1 \le T$ such that $ a_{t_1} \le C_a$, $ a_{t_1+1} > C_a$; 
\item either $ b_k \le C_b$ for any $k \le K$, or $\exists k_1 \le K$ such that $ b_{k_1} \le C_b$, $ b_{k_1+1} > C_b$; 
\item either $ c_n \le C_c$ for any $n \le N$, or $\exists n_1 \le N$ such that $ c_{n_1} \le C_c$, $ c_{n_1+1} > C_c$.
\end{enumerate}

We then consider the case where we use the conjugate gradient method to solve the linear system. Therefore, we do not need to consider the step size $ c_n$.

AdaRHD-CG:
\begin{enumerate}[(1)]
\item either $ a_t \le C_a$ for any $t \le T$, or $\exists t_1 \le T$ such that $ a_{t_1} \le C_a$, $ a_{t_1+1} > C_a$; 
\item either $ b_k \le C_b$ for any $k \le K$, or $\exists k_1 \le K$ such that $ b_{k_1} \le C_b$, $ b_{k_1+1} > C_b$; 
\end{enumerate}
\end{proposition}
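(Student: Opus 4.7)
The plan is to observe that this proposition is essentially a dichotomy statement driven by the monotonicity of the step-size sequences generated by the ``inverse of cumulative gradient norm'' rule, so the proof will reduce to a clean case analysis based on the first crossing time.

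First, I would establish the underlying monotonicity. By construction in Algorithm \ref{alg:AdaRHD}, each sequence satisfies
\[
a_{t+1}^2 = a_t^2 + \|\widehat{\gG}F(x_t, y_t^{K_t}, v_t^{N_t})\|_{x_t}^2, \quad b_{k+1}^2 = b_k^2 + \|\gG_y g(x_t, y_t^k)\|_{y_t^k}^2, \quad c_{n+1}^2 = c_n^2 + \|\nabla_v R(x_t, y_t^{K_t}, v_t^n)\|_{y_t^{K_t}}^2,
\]
so each of $\{a_t\}, \{b_k\}, \{c_n\}$ is non-decreasing in its iteration index. This is the only structural property I will need from the algorithm for this proposition.

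Next, I would treat the three claims of AdaRHD-GD uniformly by a single dichotomy argument, with the $a_t$ sequence as the prototype. Fix $C_a \ge a_0$ and define the set $S := \{t \le T : a_t > C_a\}$. If $S = \varnothing$, then conclusion (1) of the first bullet holds trivially. Otherwise, let $t_* := \min S$, which is well-defined since $S$ is a nonempty subset of $\{0,1,\dots,T\}$. Because $a_0 \le C_a$, we have $t_* \ge 1$, so set $t_1 := t_* - 1 \le T$. By minimality of $t_*$ we get $a_{t_1} \le C_a$, and by definition of $t_*$ we get $a_{t_1 + 1} = a_{t_*} > C_a$, giving conclusion (2). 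The arguments for $b_k$ (using $C_b \ge b_0$) and $c_n$ (using $C_c \ge c_0$) are identical with the respective monotone sequences. For AdaRHD-CG, the same argument applies to $\{a_t\}$ and $\{b_k\}$, while the $c_n$ statement is simply omitted since conjugate gradient does not generate such a sequence.

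There is essentially no hard step here: the entire proposition is a well-ordering argument plus the trivial observation that squared step sizes are accumulated sums of nonnegative quantities. If anything, the only point that requires a line of care is checking that $t_1 \le T$ (rather than $t_1 + 1 \le T$), which is resolved by noting $t_* \le T$ implies $t_1 = t_*-1 \le T-1 \le T$. I would present the proof compactly by proving the $a_t$ case in detail and then remarking that the $b_k$, $c_n$ cases are completely analogous, thereby covering both AdaRHD-GD and AdaRHD-CG.
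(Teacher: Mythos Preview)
Your proposal is correct and is exactly the natural argument: monotonicity of the accumulated step-size sequences plus a first-crossing-time dichotomy. The paper in fact does not write out a proof for this proposition at all---it simply states it as ``inspired by Proposition 1 in \cite{yang2024tuning}'' and uses the dichotomy directly---so your short well-ordering argument is precisely what is implicitly being invoked.
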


\subsubsection{Improvement on objective function for one step update}
We first define a threshold for $a_t$:
\begin{equation}
\label{equ:Ca}
C_a := \max\{2L_{F},  a_0\}.
\end{equation}
\begin{lemma}
\label{lem:impro onestep}
Suppose that Assumptions \ref{ass:curvature}, \ref{ass:str}, and \ref{ass:lips} hold. Then, we have
\begin{equation}
\label{equ:lem:impro onestepmain1}
F(x_{t+1}) \le F(x_t) - \frac{1}{2 a_{t+1}}\|\gG F(x_t)\|_{x_t}^2 - \frac{1}{2 a_{t+1}}\left(1-\frac{L_{F}}{ a_{t+1}}\right)\|\widehat \gG F(x_t, y_t^{K_t}, v_t^{N_t})\|_{x_t}^2 + \frac{\epsilon_{y,v}}{2 a_{t+1}}. 
\end{equation}
Furthermore, if $t_1$ in Proposition \ref{prop:TKN} exists, then for any $t \ge t_1$, we have 
\begin{equation}
\label{equ:lem:impro onestepmain2}
F(x_{t+1}) 
\le F(x_t) - \frac{1}{2 a_{t+1}}\|\gG F(x_t)\|_{x_t}^2 - \frac{1}{4 a_{t+1}}\|\widehat \gG F(x_t, y_t^{K_t}, v_t^{N_t})\|_{x_t}^2 + \frac{\epsilon_{y,v}}{2 a_{t+1}}, 
\end{equation}
where 
$\epsilon_{y,v} := \frac{\bar{L}^2}{\mu^2}(\epsilon_y + \epsilon_v)$ and $\bar{L}:= \max\{\sqrt{2}L_1, \sqrt{2}L_{g}\}$ with $L_1$ is defined in Lemma \ref{lem:hgerror}.  
\end{lemma}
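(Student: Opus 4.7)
\textbf{Proof proposal for Lemma \ref{lem:impro onestep}.} The plan is to apply the Riemannian descent lemma to $F$ (which is $L_F$-smooth by Lemma \ref{lem:basicy*GF}(\ref{lem:basicy*GFGFx})), expand the inner product using the polarization identity, and then invoke Lemma \ref{lem:hgerror} together with Lemma \ref{lem:yty*vtv*} to control the approximation error. The two cases in the statement are then separated by the threshold inequality $ a_{t+1} \ge 2 L_F$ that is enforced after step $t_1$.

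First I would apply Proposition \ref{prop:lsmoothmustrong} to $F$ at the pair $(x_t, x_{t+1})$, using the fact that $\Exp^{-1}_{x_t}(x_{t+1}) = -\frac{1}{ a_{t+1}} \widehat{\gG} F(x_t, y_t^{K_t}, v_t^{N_t})$ and $d(x_t, x_{t+1}) = \frac{1}{ a_{t+1}} \| \widehat{\gG} F(x_t, y_t^{K_t}, v_t^{N_t}) \|_{x_t}$. This yields
\[
F(x_{t+1}) \le F(x_t) - \frac{1}{ a_{t+1}} \langle \gG F(x_t), \widehat{\gG} F \rangle_{x_t} + \frac{L_F}{2  a_{t+1}^2} \| \widehat{\gG} F \|_{x_t}^2.
\]
Next I would rewrite the cross term via the polarization identity $-\langle u, v \rangle = \tfrac{1}{2}\|u-v\|^2 - \tfrac{1}{2}\|u\|^2 - \tfrac{1}{2}\|v\|^2$ inside the tangent space $T_{x_t}\gM_x$, which converts the estimate into
\[
F(x_{t+1}) \le F(x_t) + \frac{1}{2  a_{t+1}} \| \gG F(x_t) - \widehat{\gG} F \|_{x_t}^2 - \frac{1}{2  a_{t+1}} \| \gG F(x_t) \|_{x_t}^2 - \frac{1}{2  a_{t+1}}\left(1 - \frac{L_F}{ a_{t+1}}\right) \| \widehat{\gG} F \|_{x_t}^2.
\]

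For the residual error term, I would combine Lemma \ref{lem:hgerror} with the inequality $(p+q)^2 \le 2p^2 + 2q^2$ to obtain
\[
\| \gG F(x_t) - \widehat{\gG} F \|_{x_t}^2 \le 2 L_1^2\, d^2(y^*(x_t), y_t^{K_t}) + 2 L_g^2\, \| v_t^{N_t} - \hat{v}_t^*(x_t, y_t^{K_t}) \|_{y_t^{K_t}}^2,
\]
and then invoke Lemma \ref{lem:yty*vtv*} to bound both distances by $\epsilon_y/\mu^2$ and $\epsilon_v/\mu^2$ respectively. With the choice $\bar L = \max\{\sqrt{2}L_1, \sqrt{2}L_g\}$, this produces the uniform bound $\| \gG F(x_t) - \widehat{\gG} F \|_{x_t}^2 \le \epsilon_{y,v}$, which upon substitution gives exactly \eqref{equ:lem:impro onestepmain1}.

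Finally, for \eqref{equ:lem:impro onestepmain2} I would observe from the update $ a_{t+1}^2 =  a_t^2 + \| \widehat{\gG} F \|_{x_t}^2$ that $\{ a_t\}$ is non-decreasing, so whenever $t_1$ exists we have $ a_{t+1} \ge  a_{t_1+1} > C_a \ge 2 L_F$ for every $t \ge t_1$ by the definition \eqref{equ:Ca} of $C_a$. Consequently $1 - L_F/ a_{t+1} \ge 1/2$, which replaces the coefficient $\frac{1}{2 a_{t+1}}(1 - L_F/ a_{t+1})$ in \eqref{equ:lem:impro onestepmain1} by $\frac{1}{4 a_{t+1}}$, yielding \eqref{equ:lem:impro onestepmain2}. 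The main obstacle is mostly bookkeeping: making sure that the Riemannian polarization is valid inside a single tangent space (which it is, since all vectors lie in $T_{x_t}\gM_x$) and that the threshold constant $C_a$ is exactly the one needed to activate the $1/2$ contraction; no genuinely new curvature-dependent estimate is required beyond the already-established Lemmas \ref{lem:hgerror} and \ref{lem:yty*vtv*}.
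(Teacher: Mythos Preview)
Your proposal is correct and follows essentially the same route as the paper: apply the Riemannian descent lemma for the $L_F$-smooth function $F$, rewrite the inner product via polarization, bound the approximation error using Lemma \ref{lem:hgerror} combined with Lemma \ref{lem:yty*vtv*}, and finally invoke $a_{t+1} > C_a \ge 2L_F$ for $t\ge t_1$ to obtain the second inequality. Your additional remark that the monotonicity of $\{a_t\}$ guarantees $a_{t+1}\ge a_{t_1+1}$ is a nice clarification that the paper leaves implicit.
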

\begin{proof} 
Combining Lemma \ref{lem:basicy*GF} \eqref{lem:basicy*GFGFx} and Proposition \ref{prop:lsmoothmustrong}, we have
\begingroup
\allowdisplaybreaks
\begin{align}
\label{equ:lem:impro onestep1}
F(x_{t+1}) \le& F(x_t) + \left\langle\gG F(x_t), \Exp^{-1}_{x_t}(x_{t+1}) \right\rangle_{x_t} + \frac{L_{F}}{2}d(x_{t+1}, x_t)^2 \nnub \\
= & F(x_t) - \frac{1}{ a_{t+1} }\left\langle\gG F(x_t), \widehat \gG F\left(x_t, y_t^{K_t}, v_t^{N_t}\right) \right\rangle_{x_t} + \frac{L_{F}}{2 a_{t+1}^2 }\left\|\widehat \gG F\left(x_t, y_t^{K_t}, v_t^{N_t}\right)\right\|_{x_t}^2 \nnub \\
= & F(x_t) - \frac{1}{2 a_{t+1} }\|\gG F(x_t)\|_{x_t}^2 - \frac{1}{2 a_{t+1} }\left\|\widehat \gG F\left(x_t, y_t^{K_t}, v_t^{N_t}\right)\right\|_{x_t}^2 \nnub \\
& + \frac{1}{2 a_{t+1} }\left\|\gG F(x_t) - \widehat \gG F\left(x_t, y_t^{K_t}, v_t^{N_t}\right)\right\|_{x_t}^2 + \frac{L_{F}}{2 a_{t+1}^2 }\left\|\widehat \gG F\left(x_t, y_t^{K_t}, v_t^{N_t}\right)\right\|_{x_t}^2 \nnub\\
= & F(x_t) - \frac{1}{2 a_{t+1}}\|\gG F(x_t)\|_{x_t}^2 - \frac{1}{2 a_{t+1}}\left(1-\frac{L_{F}}{ a_{t+1}}\right)\|\widehat \gG F(x_t, y_t^{K_t}, v_t^{N_t})\|_{x_t}^2 \nnub\\
&+ \frac{1}{2 a_{t+1} }\left\|\gG F(x_t) - \widehat \gG F\left(x_t, y_t^{K_t}, v_t^{N_t}\right)\right\|_{x_t}^2.
\end{align}
\endgroup
By Lemma \ref{lem:hgerror}, it holds that
\begingroup
\allowdisplaybreaks
\begin{align}
\left\|\gG F(x_t) - \widehat \gG F\left(x_t, y_t^{K_t}, v_t^{N_t}\right)\right\|_{x_t}^2
\le & 2L_1^2 d^2(y_t^{K_t}, y^*(x_t)) + 2L_{g}^2 \left(\|v_t^{N_t} - \gP_{y^*(x_t)}^{y_t}\hat{v}^*(x_t, y^*(x_t))\|_{y_t}^2\right) \nnub \\
\le & \bar{L}^2 \left(d^2(y_t^{K_t}, y^*(x_t)) + \|v_t^{N_t} - \gP_{y^*(x_t)}^{y_t}\hat{v}^*(x_t, y^*(x_t))\|_{y_t}^2\right)\nnub. 
\end{align}
\endgroup
Therefore, by Lemma \ref{lem:yty*vtv*}, we have
\begin{equation*}
\left\|\gG F(x_t) - \widehat \gG F\left(x_t, y_t^{K_t}, v_t^{N_t}\right)\right\|^2 \le \frac{\bar{L}^2}{\mu^2}(\epsilon_y + \epsilon_v) = \epsilon_{y,v}.
\end{equation*}
This Combine \eqref{equ:lem:impro onestep1} deduces the desired result of \eqref{equ:lem:impro onestepmain1}.

\textbf{If $t_1$ in Proposition \ref{prop:TKN} exists}, then for $t\ge t_1$, we have $ a_{t+1} > C_a\ge 2L_{F}$. The desired result of \eqref{equ:lem:impro onestepmain2} follows from \eqref{equ:lem:impro onestepmain1}. The proof is complete. 
\end{proof}

Then, similar to \cite[Lemma 8]{yang2024tuning}, we have the following upper bound for the step size $a_t$.
\begin{lemma}
\label{lem:at}
Suppose that Assumptions \ref{ass:curvature}, \ref{ass:str}, \ref{ass:lips}, and \ref{ass:inf} hold. If $t_1$ in Proposition \ref{prop:TKN} does not exist, we have $ a_t \le C_a$ for all $t \le T$.

If the $t_1$ in Proposition \ref{prop:TKN} exists, we have 
\begin{equation*}
\left\{
\begin{aligned}
a_t  \le & C_a, ~~ & t \le t_1; \\
a_t  \le & C_a + 2F_0 + \frac{2t\epsilon_{y,v}}{ a_{0}}, ~~ & t > t_1, 
\end{aligned}
\right.
\end{equation*}
where
\begin{equation}
\label{equ:lem:atc0}
F_0 := 2\left(F(x_{0}) - F^*\right) + \frac{L_{F}C_a^2}{ a_{0}^2}. 
\end{equation}
\end{lemma}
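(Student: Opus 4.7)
\textbf{Proof proposal for Lemma \ref{lem:at}.} The plan is to split into the two cases stated in Proposition \ref{prop:TKN}. If $t_1$ does not exist, then by the very definition in that proposition, $a_t \le C_a$ for all $t \le T$, and there is nothing to prove. So assume $t_1$ exists. For $t \le t_1$ the bound $a_t \le C_a$ is again immediate from the definition of $t_1$ in Proposition \ref{prop:TKN}, so the only work is the case $t > t_1$.

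The key observation is that $a_{s+1}^2 - a_s^2 = \|\widehat\gG F(x_s,y_s^{K_s},v_s^{N_s})\|_{x_s}^2$, and since $a_{s+1}\ge a_s$ we have the elementary inequality
\[
a_{s+1} - a_s \;\le\; \frac{a_{s+1}^2 - a_s^2}{a_{s+1}} \;=\; \frac{\|\widehat\gG F(x_s,y_s^{K_s},v_s^{N_s})\|_{x_s}^2}{a_{s+1}}.
\]
For indices $s\ge t_1$ we have $a_{s+1} > C_a \ge 2L_F$, so Lemma \ref{lem:impro onestep} inequality \eqref{equ:lem:impro onestepmain2} applies and yields $\tfrac{1}{4a_{s+1}}\|\widehat\gG F\|^2 \le F(x_s) - F(x_{s+1}) + \tfrac{\epsilon_{y,v}}{2a_{s+1}}$. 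Combining these two bounds and telescoping from $s=t_1$ to $s=t-1$, and using $a_{s+1}\ge a_0$ to control the accumulated noise term, gives
\[
\tfrac{1}{4}(a_t - a_{t_1}) \;\le\; F(x_{t_1}) - F^* + \tfrac{(t-t_1)\epsilon_{y,v}}{2a_0}.
\]
Since $a_{t_1}\le C_a$, it only remains to bound $F(x_{t_1}) - F^*$ in terms of $F(x_0) - F^*$ and $C_a$.

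To bound $F(x_{t_1})$, I would apply the weaker inequality \eqref{equ:lem:impro onestepmain1} for $s=0,\dots,t_1-1$, dropping the nonpositive term $-\tfrac{1}{2a_{s+1}}\|\gG F(x_s)\|^2$ and using the crude estimate $1 - \tfrac{L_F}{a_{s+1}} \ge -\tfrac{L_F}{a_{s+1}}$ (which is needed precisely because $a_{s+1}$ may be smaller than $L_F$ in this regime). Summing yields
\[
F(x_{t_1}) \;\le\; F(x_0) + \sum_{s=0}^{t_1-1}\frac{L_F}{2 a_{s+1}^2}\|\widehat\gG F\|_{x_s}^2 + \frac{t_1\epsilon_{y,v}}{2a_0},
\]
and the middle sum is bounded using $a_{s+1}\ge a_0$ and the telescoping identity $\sum_{s=0}^{t_1-1}(a_{s+1}^2 - a_s^2) = a_{t_1}^2 - a_0^2 \le C_a^2$, giving $\sum_{s=0}^{t_1-1}\tfrac{L_F}{2a_{s+1}^2}\|\widehat\gG F\|_{x_s}^2 \le \tfrac{L_F C_a^2}{2 a_0^2}$. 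Hence $F(x_{t_1}) - F^* \le (F(x_0)-F^*) + \tfrac{L_F C_a^2}{2a_0^2} + \tfrac{t_1\epsilon_{y,v}}{2a_0} = \tfrac{1}{2}F_0 + \tfrac{t_1\epsilon_{y,v}}{2a_0}$ with $F_0$ as defined in \eqref{equ:lem:atc0}.

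Plugging this into the telescoped inequality for $a_t$ and merging the two noise contributions (the $t_1$-piece from bounding $F(x_{t_1})$ and the $(t-t_1)$-piece from the summation over $s\ge t_1$) produces exactly $a_t \le C_a + 2F_0 + \tfrac{2t\epsilon_{y,v}}{a_0}$, as required. The main subtle step is the second one: the descent inequality cannot be used to guarantee monotone decrease of $F$ before $a_{s+1}$ crosses the threshold $2L_F$, so one must carefully absorb the possibly-positive term $\tfrac{L_F}{2a_{s+1}^2}\|\widehat\gG F\|^2$ via the telescoping identity, which is the only place where the constant $\tfrac{L_F C_a^2}{a_0^2}$ enters and pins down the precise form of $F_0$.
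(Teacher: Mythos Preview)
Your proposal is correct and follows essentially the same strategy as the paper's proof: both use the increment bound $a_{s+1}-a_s \le \|\widehat\gG F\|_{x_s}^2/a_{s+1}$, apply \eqref{equ:lem:impro onestepmain2} for $s\ge t_1$ to control the right-hand side via the descent of $F$, and bound $F(x_{t_1})$ by summing \eqref{equ:lem:impro onestepmain1} over $s<t_1$ together with the telescoping identity $\sum_{s<t_1}\|\widehat\gG F\|_{x_s}^2 = a_{t_1}^2-a_0^2\le C_a^2$. The only difference is organizational: the paper first bounds the full sum $\sum_{s\ge t_1}\|\widehat\gG F\|_{x_s}^2/a_{s+1}$ and then adds it to $a_{t_1}$, whereas you merge the descent inequality into the increment bound before telescoping; the constants and the final inequality coincide exactly.
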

\begin{proof}
\textbf{If $t_1$ in Proposition \ref{prop:TKN} does not exist}, then for any $t \le T$, it holds that $ a_t \le C_a$.

\textbf{If $t_1$ in Proposition \ref{prop:TKN} exists}, then for any $t < t_1$, it holds that $ a_{t+1} \le C_a$. By Lemma \ref{lem:impro onestep}, for any $t \ge t_1$, it holds that
\begin{equation*}
F(x_{t+1}) \le F(x_t) - \frac{1}{2 a_{t+1}}\|\gG F(x_t)\|_{x_t}^2 - \frac{1}{4 a_{t+1}}\|\widehat \gG F(x_t, y_t^{K_t}, v_t^{N_t})\|_{x_t}^2 + \frac{\epsilon_{y,v}}{2 a_{t+1}}.
\end{equation*}
Removing the nonnegative term $- \frac{1}{2 a_{t+1}}\|\gG F(x_t)\|_{x_t}^2$, we have
\begin{equation}
\label{equ:lem:at0}
\frac{\|\widehat \gG F(x_t, y_t^{K_t}, v_t^{N_t})\|_{x_t}^2}{ a_{t+1}} \le 4\left(F(x_t) - F(x_{t+1})\right) + \frac{2\epsilon_{y,v}}{ a_{t+1}}.
\end{equation}
Summing \eqref{equ:lem:at0} from $t_1$ to $t$, we have
\begin{equation}
\label{equ:lem:at1}
\sum_{i=t_1}^{t}\frac{\|\widehat \gG F(x_i, y_i^{K_i}, v_i^{N_i})\|_{x_i}^2}{ a_{i+1}} \le 4\sum_{i={t_1}}^{t}\left(F(x_i) - F(x_{i+1})\right) + \sum_{i={t_1}}^{i}\frac{2\epsilon_{y,v}}{ a_{i+1}} 4\left(F(x_{t_1}) - F(x_{t+1})\right) + \sum_{i={t_1}}^{t}\frac{2\epsilon_{y,v}}{ a_{t+1}}.
\end{equation}
For $F(x_{t_1})$, by \eqref{equ:lem:impro onestepmain1}, we have
\begin{equation*}
F(x_{t_1}) \le F(x_0) + \sum_{i=0}^{t_1-1}\frac{L_{F}}{2 a_{i+1}^2}\|\widehat \gG F(x_i, y_i^{K_i}, v_i^{N_i})\|_{x_i}
^2 + \sum_{i=0}^{t_1-1}\frac{\epsilon_{y,v}}{2 a_{i+1}}.
\end{equation*}
This combines with \eqref{equ:lem:at1}, by $F(x_{t+1}) \ge F^*$, we have
\begingroup
\allowdisplaybreaks
\begin{align}
&\sum_{i=t_1}^{t}\frac{\|\widehat \gG F(x_i, y_i^{K_i}, v_i^{N_i})\|_{x_i}^2}{ a_{i+1}}
\le  4\left(F(x_{0}) - F^*\right) + \sum_{i=0}^{t_1-1}\frac{2L_{F}}{ a_{i+1}^2}\|\widehat \gG F(x_i, y_i^{K_i}, v_i^{N_i})\|_{x_i}^2 + \sum_{i=0}^{t}\frac{2\epsilon_{y,v}}{ a_{i+1}} \nnub \\
\le & 
4\left(F(x_{0}) - F^*\right) + \frac{2L_{F}\sum_{i=0}^{t_1-1}\|\widehat \gG F(x_i, y_i^{K_i}, v_i^{N_i})\|_{x_i}^2}{ a_{0}^2} + \sum_{i=0}^{t}\frac{2\epsilon_{y,v}}{ a_{i+1}} \nnub \\
\le & 4\left(F(x_{0}) - F^*\right) + \frac{2L_{F} a_{t_1}^2}{ a_{0}^2} + \frac{2(t+1)\epsilon_{y,v}}{ a_{0}} \le 4\left(F(x_{0}) - F^*\right) + \frac{2L_{F}C_a^2}{ a_{0}^2} + \frac{2(t+1)\epsilon_{y,v}}{ a_{0}}\nnub. 
\end{align}
\endgroup
Since $ a_{t+1}^2 =  a_t^2 + \|\widehat{\gG} F(x_t, y_t^{K_t}, v_t^{N_t})\|_{x_t}^2$, we have 
\begingroup
\allowdisplaybreaks
\begin{align}
\label{equ:at+1}
a_{t+1} =&  a_t + \frac{\|\widehat \gG F(x_t, y_t^{K_t}, v_t^{N_t})\|_{x_t}^2}{ a_{t+1} +  a_t} \le  a_t + \frac{\|\widehat \gG F(x_t, y_t^{K_t}, v_t^{N_t})\|_{x_t}^2}{ a_{t+1} } \le  a_{t_1} + \sum_{i=t_1}^{t}\frac{\|\widehat \gG F(x_i, y_i^{K_i}, v_i^{N_i})\|_{x_i}^2}{ a_{i+1}} \nnub \\
\le & C_a + 4\left(F(x_{0}) - F^*\right) + \frac{2L_{F}C_a^2}{ a_{0}^2} + \frac{2(t+1)\epsilon_{y,v}}{ a_{0}}.
\end{align}
\endgroup
The proof is complete. 
\end{proof}

Similar to \eqref{equ:Ca}, we define the thresholds for the step sizes $b_k $ and $ c_n $ as follows:
\begin{equation}
\label{equ:Cbr}
C_b := \max\left\{2\zeta L_{g}\frac{L_{g}}{\mu},  b_0\right\}, ~~ 
C_c := \max\{L_{g},  c_0\}.
\end{equation}
Before proving Proposition \ref{prop:KtNt}, we need two technical lemmas.
\begin{lemma}
\label{lem:conv of lower}
Suppose that Assumptions \ref{ass:curvature}, \ref{ass:str}, and \ref{ass:lips} hold. Then, we have
\[
d^2(y_t^{k+1}, y^*(x_t)) \le \left(1 + \frac{1}{b_{k+1}^2} \zeta L_{g}^2 -  \frac{\mu}{b_{k+1}}\right) d^2(y_t^k, y^*(x_t)).
\]
\end{lemma}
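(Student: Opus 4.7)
The plan is to apply the trigonometric distance bound (Lemma \ref{lem:trig}) to the geodesic triangle with vertices $y_t^k$, $y^*(x_t)$, and $y_t^{k+1}$, and then use the update rule together with the geodesic strong convexity of $g(x_t, \cdot)$ to control each term. Concretely, I would set $x_a = y_t^k$, $x_b = y^*(x_t)$, $x_c = y_t^{k+1}$ in Lemma \ref{lem:trig}, so that the side lengths are $a = d(y_t^{k+1}, y^*(x_t))$, $b = d(y_t^k, y_t^{k+1})$, $c = d(y_t^k, y^*(x_t))$, yielding
\[
d^2(y_t^{k+1}, y^*(x_t)) \le \zeta(\tau, c)\, d^2(y_t^k, y_t^{k+1}) + d^2(y_t^k, y^*(x_t)) - 2\bigl\langle \Exp_{y_t^k}^{-1}(y^*(x_t)),\, \Exp_{y_t^k}^{-1}(y_t^{k+1}) \bigr\rangle_{y_t^k}.
\]
Assumption \ref{ass:curvature} replaces $\zeta(\tau, c)$ by the uniform constant $\zeta$.

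Next, I would plug in the update $\Exp_{y_t^k}^{-1}(y_t^{k+1}) = -\tfrac{1}{b_{k+1}}\gG_y g(x_t, y_t^k)$. For the first term, $d^2(y_t^k, y_t^{k+1}) = \tfrac{1}{b_{k+1}^2}\|\gG_y g(x_t, y_t^k)\|_{y_t^k}^2$, and since $\gG_y g(x_t, y^*(x_t)) = 0$, the Lipschitz property in Assumption \ref{ass:lips} gives $\|\gG_y g(x_t, y_t^k)\|_{y_t^k} \le L_g\, d(y_t^k, y^*(x_t))$, producing the $\tfrac{\zeta L_g^2}{b_{k+1}^2}\, d^2(y_t^k, y^*(x_t))$ contribution.

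For the inner-product cross term, substituting the update rewrites it as $\tfrac{2}{b_{k+1}}\langle \gG_y g(x_t, y_t^k), \Exp_{y_t^k}^{-1}(y^*(x_t))\rangle_{y_t^k}$. I would apply the $\mu$-geodesic strong convexity of $g(x_t, \cdot)$ in the form \eqref{equ:strongconv} at the pair $(y_t^k, y^*(x_t))$:
\[
g(x_t, y^*(x_t)) \ge g(x_t, y_t^k) + \bigl\langle \gG_y g(x_t, y_t^k),\, \Exp_{y_t^k}^{-1}(y^*(x_t))\bigr\rangle_{y_t^k} + \frac{\mu}{2} d^2(y_t^k, y^*(x_t)).
\]
Using $g(x_t, y^*(x_t)) \le g(x_t, y_t^k)$ (optimality of $y^*(x_t)$) isolates the inner product as at most $-\tfrac{\mu}{2} d^2(y_t^k, y^*(x_t))$, so the cross term contributes $-\tfrac{\mu}{b_{k+1}} d^2(y_t^k, y^*(x_t))$. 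Summing the three pieces gives the claimed factor $1 + \tfrac{\zeta L_g^2}{b_{k+1}^2} - \tfrac{\mu}{b_{k+1}}$.

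No step is particularly difficult here; the only subtlety is choosing the vertex $x_a$ of the triangle correctly so that $\zeta$ multiplies $d^2(y_t^k, y_t^{k+1})$ rather than $d^2(y_t^k, y^*(x_t))$, which is essential for the curvature constant to enter the expression via the step size $b_{k+1}$ (and hence become small as $b_{k+1}$ grows).
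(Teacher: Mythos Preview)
Your proposal is correct and follows essentially the same argument as the paper's proof: both apply Lemma~\ref{lem:trig} with base point $y_t^k$, substitute the update rule $\Exp_{y_t^k}^{-1}(y_t^{k+1}) = -\tfrac{1}{b_{k+1}}\gG_y g(x_t, y_t^k)$, bound the gradient norm via Lipschitz continuity, and control the cross term via geodesic strong convexity together with optimality of $y^*(x_t)$. Your remark about the correct choice of $x_a$ so that $\zeta$ attaches to $d^2(y_t^k, y_t^{k+1})$ rather than $d^2(y_t^k, y^*(x_t))$ is a useful clarification that the paper leaves implicit.
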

\begin{proof}
By Lemma \ref{lem:trig}, we have
\begingroup
\allowdisplaybreaks
\begin{align}
&d^2(y_t^{k+1}, y^*(x_t))\\
\le&  d^2(y_t^k, y^*(x_t)) + \frac{1}{ b_{k+1}^2} \zeta \| \gG_y g(x_t, y_t^k) \|^2_{y_t^k} + \frac{2}{ b_{k+1}} \langle \gG_y g(x_t, y_t^k) , \Exp^{-1}_{y_t^k} y^*(x_t) \rangle_{y_t^k}  \nnub \\
\le& d^2(y_t^k, y^*(x_t)) + \frac{1}{ b_{k+1}^2} \zeta \| \gG_y g(x_t, y_t^k) \|^2_{y_t^k} + \frac{2}{ b_{k+1}} ( g(x_t, y^*(x_t)) - g(x_t, y_t^k)  - \frac{\mu}{2} d^2(y_t^k, y^*(x_t))) \nnub \\
\le& \left(1 + \frac{1}{b_{k+1}^2}\zeta L_{g}^2 - \frac{\mu}{b_{k+1}}\right) d^2(y_t^k, y^*(x_t)),\nnub 
\end{align}
\endgroup
where the second inequality follows from the geodesic strong convexity of $g$, and the third inequality follows from
\[
\| \gG_y g(x_t, y_t^k)\|^2_{y_t^k} = \| \gG_y g(x_t, y_t^k) -  \gP_{y^*(x_t)}^{y_t^k} \gG_y g(x_t, y^*(x_t)) \|^2_{y_t^k} \le L_{g}^2 d^2(y_t^k, y^*(x_t)).
\]
The proof is complete.
\end{proof}

\begin{lemma}
\label{lem:co-coer}
Suppose that Assumptions \ref{ass:curvature}, \ref{ass:str}, and \ref{ass:lips} hold. Then, we have
\[
\langle \gG_y g(x_t,y_t^k),\Exp_{y_t^k}^{-1}(y^*(x_t))\rangle_{y_t^k} \le -\frac{1}{2L_{g}}\|\gG_y g(x_t,y_t^k)\|^2_{y_t^k}.
\]
\end{lemma}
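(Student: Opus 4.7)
My plan is to establish the Riemannian analog of the standard Euclidean co-coercivity inequality $\langle \nabla f(y), y^* - y\rangle \le -\tfrac{1}{2L}\|\nabla f(y)\|^2$, which holds whenever $f$ is convex and $L$-smooth with minimizer $y^*$. The argument compares three function values of $g(x_t, \cdot)$ at $y_t^k$, at $y^*(x_t)$, and at an auxiliary point obtained from a single gradient step.

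Concretely, I would introduce
\[
z := \Exp_{y_t^k}\!\Bigl(-\tfrac{1}{L_{g}}\gG_y g(x_t, y_t^k)\Bigr),
\]
so that $\Exp^{-1}_{y_t^k}(z) = -\tfrac{1}{L_g}\gG_y g(x_t, y_t^k)$ and $d(y_t^k, z) = \tfrac{1}{L_g}\|\gG_y g(x_t,y_t^k)\|_{y_t^k}$. Applying the smoothness upper bound of Proposition \ref{prop:lsmoothmustrong} to $g(x_t,\cdot)$ at the pair $(y_t^k, z)$ then gives, after substitution,
\[
g(x_t, z) \le g(x_t, y_t^k) - \tfrac{1}{2L_{g}}\|\gG_y g(x_t, y_t^k)\|^2_{y_t^k}.
\]

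Next, since $y^*(x_t)$ is the global minimizer of $g(x_t, \cdot)$ (guaranteed by Assumption \ref{ass:str}), we have $g(x_t, y^*(x_t)) \le g(x_t, z)$. On the other hand, the geodesic convexity of $g(x_t, \cdot)$ (a consequence of its $\mu$-strong convexity, dropping the nonnegative $\tfrac{\mu}{2}d^2$ term from \eqref{equ:strongconv}) yields
\[
g(x_t, y^*(x_t)) \ge g(x_t, y_t^k) + \bigl\langle \gG_y g(x_t, y_t^k), \Exp^{-1}_{y_t^k}(y^*(x_t))\bigr\rangle_{y_t^k}.
\]
Chaining these three inequalities, the terms $g(x_t, y_t^k)$ cancel, and rearranging gives the claim.

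The only subtle point I anticipate is that Proposition \ref{prop:lsmoothmustrong} is stated on a set $\gU$; one must verify that $z$ lies in the geodesically convex region on which smoothness and convexity are assumed. This is a standard caveat shared with the Euclidean proof and is handled implicitly by the domain assumptions in Assumptions \ref{ass:curvature} and \ref{ass:lips}, so I do not expect any serious obstacle beyond keeping the domain bookkeeping straight. The argument is otherwise a direct Riemannian transcription of the classical descent-lemma-plus-convexity proof.
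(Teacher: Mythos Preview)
Your proposal is correct and follows essentially the same approach as the paper: introduce the auxiliary gradient-step point $z = \Exp_{y_t^k}\bigl(-\tfrac{1}{L_g}\gG_y g(x_t,y_t^k)\bigr)$, apply the smoothness upper bound at $z$, use optimality of $y^*(x_t)$ to get $g(x_t,y^*(x_t)) \le g(x_t,z)$, and combine with the geodesic-convexity lower bound at $y^*(x_t)$. The paper's presentation differs only cosmetically (it writes the chain $g(x_t,y^*(x_t)) \le g(x_t,y) \le \dots$ first and then substitutes $y=z$), and it likewise treats the domain bookkeeping implicitly.
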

\begin{proof}
By Assumption \ref{ass:lips}, given any $y\in\gM_y$, it holds that
\begin{equation}
\label{equ:lem:co-coer1}
g(x_t,y^*(x_t)) \le g(x_t,y) \le g(x_t,y_t^k) + \langle \gG_y g(x_t,y_t^k),\Exp_{y_t^k}^{-1}(y)\rangle_{y_t^k} + \frac{L_{g}}{2}d^2(y_t^k, y).
\end{equation}
Taking $y = \Exp_{y_t^k}(-\frac{1}{L_{g}}\gG_y g(x_t,y_t^k))$ in \eqref{equ:lem:co-coer1} derives
\[
g(x_t,y^*(x_t)) \le g(x_t,y_t^k) - \frac{1}{L_{g}}\|\gG_y g(x_t,y_t^k)\|^2_{y_t^k} + \frac{L_{g}}{2}\left\|\frac{1}{L_{g}}\gG_y g(x_t,y_t^k)\right\|^2_{y_t^k} = g(x_t,y_t^k) - \frac{1}{2L_{g}}\|\gG_y g(x_t,y_t^k)\|^2_{y_t^k},
\]
which demonstrates that
\begin{equation}
\label{equ:lem:co-coer2}
g(x_t,y^*(x_t)) - g(x_t,y_t^k) \le -\frac{1}{2L_{g}}\|\gG_y g(x_t,y_t^k)\|^2_{y_t^k}.
\end{equation}
By the geodesic convexity of $g$ w.r.t. $y$, we have
\[
g(x_t,y^*(x_t)) - g(x_t,y_t^k) \ge \langle \gG_y g(x_t,y_t^k),\Exp_{y_t^k}^{-1}(y^*(x_t))\rangle_{y_t^k}.
\]
This combines \eqref{equ:lem:co-coer2} implies that
\[
\langle \gG_y g(x_t,y_t^k),\Exp_{y_t^k}^{-1}(y^*(x_t))\rangle_{y_t^k} \le g(x_t,y^*(x_t)) - g(x_t,y_t^k) \le -\frac{1}{2L_{g}}\|\gG_y g(x_t,y_t^k)\|^2_{y_t^k}.
\]
The proof is complete.
\end{proof}

\textbf{Proof of Proposition \ref{prop:KtNt}}
\begin{proof}
For AdaRHD-GD, denote
\begin{equation}
\label{equ:barK}
\bar{K} := \frac{\log(C_b^2/ b_0^2)}{\log(1+\epsilon_y/C_b^2)} + \frac{1}{({\mu}/{ \bar{b}} - \zeta {L_{g}^2}/{\bar{b}^2})}\log\left(\frac{\tilde{b}}{\epsilon_y}\right),
\end{equation}
and
\begin{equation}
\label{equ:barN}
\bar{N}_{gd} := \frac{\log(C_c^2/ c_0^2)}{\log(1+\epsilon_v/C_c^2)} + \frac{\bar{c}}{\mu}\log\left(\frac{\tilde{c}}{\epsilon_v}\right),
\end{equation}
where $\tilde{b}:=\max\{\frac{L_{g}(\bar{b}-C_b)}{2},1\}$, $\tilde{c}:=\max\{L_{g}( \bar{c}-C_c),1\}$, $\bar{b} := C_b + 2 L_{g}\left(\frac{2\epsilon_y}{\mu^2} + \frac{2L_{g}^2C_f^2}{\mu^2 a_0^2} + 2 \zeta\log\left(\frac{C_b}{b_0}\right) + \zeta\right)$ with $C_f = \left(\frac{2L_{g}^2\epsilon_v}{\mu^2} + \frac{4L_{g}^2l_{f}^2}{\mu^2} + 4l_{f}^2\right)^{\frac{1}{2}}$ is defined in Lemma \ref{lem:boundapprhyperg}, and $ \bar{c} := C_c + L_{g}\left(\frac{2\epsilon_y}{\mu^2} + \frac{8l_{f}^2}{\mu^2} + 2\log\left(\frac{C_c}{c_0}\right) + 1\right)$. 

For AdaRHD-CG, denote
\begin{equation*}
\bar{N}_{cg} := \frac{1}{2}{ \log\left( \frac{4 L_g^3 l_f^2}{\mu^3 \epsilon_v} \right) }/{ \log\left( \frac{ \sqrt{L_{g}/\mu} + 1 }{ \sqrt{L_{g}/\mu} - 1 } \right) }.
\end{equation*}
We first show that $K_t \le \bar{K}$ for all $0 \le t \le T$.

\textbf{If $k_1$ in Proposition \ref{prop:TKN} does not exist}, it holds that $b_{K_t} \le C_b$. By \cite[Lemma 2]{xie2020linear}, we must have $K_t \le \frac{\log(C_b^2/ b_0^2)}{\log(1+\epsilon_y/C_b^2)}$. If $K_t > \frac{\log(C_b^2/ b_0^2)}{\log(1+\epsilon_y/C_b^2)}$, since $\|\gG_y g(x_t,y_t^{p})\|_{y_t^{k}}^2 \ge \epsilon_y$ and $ b_k \le C_{ b}$ hold for all $k < K^t$, we have
\begingroup
\allowdisplaybreaks
\begin{align}
\label{equ:Ktupbound1}
b_{K_t}^2 =&  b_{K_t-1}^2 + \|\gG_y g(x_t,y_t^{K_t-1})\|_{y_t^{K_t-1}}^2 =  b_{K_t-1}^2\left(1+\frac{\|\gG_y g(x_t,y_t^{K_t-1})\|_{y_t^{K_t-1}}^2}{ b_{K_t-1}^2}\right)\nnub \\
\ge&  b_0^2 \prod_{k=0}^{K_t-1}\left(1+\frac{\|\gG_y g(x_t,y_t^{k})\|_{y_t^{k}}^2}{ b_k ^2}\right) \ge  b_0^2 \left(1+\frac{\epsilon_y}{C_b^2}\right)^{K_t} > b_0^2 \left(\left(1+\frac{\epsilon_y}{C_b^2}\right)^{1/\log(1+\epsilon_y/C_b^2)}\right)^{\log(C_b^2/ b_0^2)} = C_b^2,
\end{align}
\endgroup
which contradicts $ b_{K_t} \le C_b$.

\textbf{If $k_1$ in Proposition \ref{prop:TKN} exists}, then, we have $ b_{k_1} \le C_b$ and $ b_{k_1+1} > C_b$. We first prove $k_1 \le \frac{\log(C_b^2/ b_0^2)}{\log(1+\epsilon_y/C_b^2)}$. If $k_1 > \frac{\log(C_b^2/ b_0^2)}{\log(1+\epsilon_y/C_b^2)}$, similar to \eqref{equ:Ktupbound1}, we have 
\begin{equation*}
b_{k_1}^2 \ge  b_0^2 \left(1+\frac{\epsilon_y}{C_b^2}\right)^{k_1} > C_b^2,
\end{equation*}
which contradicts $b_{k_1} \le C_b$.

By Lemma \ref{lem:trig} and the update mode of $y_t^{k_1}$, we have
\begingroup
\allowdisplaybreaks
\begin{align}
\label{equ:distyk1}
&d^2(y_t^{k_1}, y^*(x_t))\nnub\\
\le & d^2(y_t^{k_1-1}, y^*(x_t)) + \zeta \left\|\frac{\gG_y g(x_t,y_t^{k_1-1})}{ b_{k_1}}\right\|^2_{y_t^{k_1-1}} + 2 \frac{1}{ b_{k_1}} \left\langle \gG_y g(x_t, y_t^{k_1-1}), \Exp^{-1}_{y_t^{k_1-1}} y^*(x_t) \right\rangle_{y_t^{k_1-1}} \nnub\\
\overset{(a)}{\le}& d^2(y_t^{k_1-1}, y^*(x_t)) + \zeta \left\|\frac{\gG_y g(x_t,y_t^{k_1-1})}{ b_{k_1}}\right\|_{y_t^{k_1-1}}^2 + 2 \frac{1}{ b_{k_1}} \left( g(x_t, y^*(x_t)) - g(x_t, y_t^{k_1-1})  - \frac{\mu}{2} d^2(y_t^{k_1-1}, y^*(x_t))\right) \nnub \\
\le & d^2(y_t^{k_1-1}, y^*(x_t)) + \zeta\frac{\|\gG_y g(x_t,y_t^{k_1-1})\|_{y_t^{k_1-1}}^2}{ b_{k_1}^2} \le d^2(y_t^{0}, y^*(x_t)) + \zeta\sum_{k=0}^{k_1-1}\frac{\|\gG_y g(x_t,y_t^{k})\|_{y_t^{k}}^2}{ b_{k+1}^2} \nnub \\
\overset{(b)}{\le}& d^2(y_{t-1}^{K_{t-1}}, y^*(x_t)) + \zeta \sum_{k=0}^{k_1-1}\frac{\|\gG_y g(x_t,y_t^{k})\|_{y_t^{k}}^2/ b_0^2}{\sum_{i=0}^{k}\|\gG_y g(x_t,y_t^{i})\|_{y_t^{i}}^2/ b_0^2} \nnub \\
\overset{(c)}{\le}& 2 d^2(y_{t-1}^{K_{t-1}}, y^*(x_{t-1})) + 2 d^2(y^*(x_{t-1}), y^*(x_t)) + \zeta \log\left(\sum_{k=0}^{k_1-1}\frac{\|\gG_y g(x_t,y_t^{k})\|_{y_t^{k}}^2}{ b_0^2}\right)+\zeta \nnub \\
\overset{(d)}{\le}& \frac{2\epsilon_y}{\mu^2} + \frac{2L_{g}^2\|\widehat \gG F(x_{t-1}, y_{t-1}^{K_{t-1}}, v_{t-1}^{N_{t-1}})\|_{y_{t-1}^{K_{t-1}}}^2}{\mu^2 a_t^2} + \zeta\log\left(\sum_{k=0}^{k_1-1}\frac{\|\gG_y g(x_t,y_t^{k})\|_{y_t^{k}}^2}{ b_0^2}\right) + \zeta \nnub \\
\overset{(e)}{\le}& \frac{2\epsilon_y}{\mu^2} + \frac{2L_{g}^2C_f^2}{\mu^2 a_0^2} + 2 \zeta\log\left(\frac{C_b}{b_0}\right) + \zeta,
\end{align}
\endgroup
where (a) follows from \eqref{equ:strongconv}, (b) follows from the warm start of $y_t^0$, (c) follows from \citep[Definition 10.1]{boumal2023introduction} and Lemma \ref{lem:sum<log}, (d) uses Lemmas \ref{lem:basicy*GF} \eqref{lem:basicy*GFy*x} and \ref{lem:yty*vtv*}, (e) follows from Lemma \ref{lem:boundapprhyperg} and $ b_{k_1} \le C_b$.

For any $K > k_1$, by Lemma \ref{lem:conv of lower}, we have
\begingroup
\allowdisplaybreaks
\begin{align}
\label{equ:distyk2}
d^2(y_t^{K}, y^*(x_t)) \le& \left(1 -\left(\frac{\mu}{ b_K } - \zeta \frac{L_{g}^2}{ b_K^2}\right)\right)d^2(y_t^{K-1}, y^*(x_t)) \le e^{-\left(\frac{\mu}{ b_K } - \zeta \frac{L_{g}^2}{ b_K^2}\right)(K-k_1)}d^2(y_t^{k_1}, y^*(x_t)) \nnub \\
\le& e^{-\left(\frac{\mu}{ b_K } - \zeta \frac{L_{g}^2}{ b_K^2}\right)(K-k_1)}\left(\frac{2\epsilon_y}{\mu^2} + \frac{2L_{g}^2C_f^2}{\mu^2 a_0^2} + 2 \zeta\log\left(\frac{C_b}{b_0}\right) + \zeta\right),
\end{align}
\endgroup
where the second inequality follows from $ b_K \ge C_b \ge 2\zeta L_{g}\frac{L_{g}}{\mu}$ and $1-x \le e^{-x}$ for $0<x<1$, specifically, when $b_K \ge 2\zeta L_{g}\frac{L_{g}}{\mu}$, it holds that $0 < \frac{\mu}{ b_K} - \zeta \frac{L_{g}^2}{ b_K^2} < 1$, and the third inequality follows from \eqref{equ:distyk1}.

Similar to \eqref{equ:at+1}, we have
\begin{equation}
\label{equ:distyk3}
b_K  =  b_{K-1} + \frac{\|\gG_y g(x_t, y_t^{K-1})\|_{y_t^{K-1}}^2}{ b_K + b_{K-1}} \le  b_{k_1} + \sum_{k=k_1}^{K-1}\frac{\|\gG_y g(x_t, y_t^{k})\|_{y_t^{k}}^2}{ b_{k+1}}.
\end{equation}
Moreover, by the update mode of $y_t^K$ and Lemma \ref{lem:trig}, we have
\begingroup
\allowdisplaybreaks
\begin{align}
\label{equ:distyk4}
&d^2(y_t^{K}, y^*(x_t)) \le d^2(y_t^{K-1}, y^*(x_t)) + \zeta\frac{\|\gG_y g(x_t, y_t^{K-1})\|_{y_t^{K-1}}^2}{ b_K ^2} +  \frac{2\left\langle \gG_y g(x_t, y_t^{K-1}), \Exp^{-1}_{y_t^{K-1}} y^*(x_t) \right\rangle_{y_t^{K-1}}}{ b_K} \nnub \\
\le & d^2(y_t^{K-1}, y^*(x_t)) + \zeta\frac{\|\gG_y g(x_t, y_t^{K-1})\|_{y_t^{K-1}}^2}{ b_K^2} -  \frac{\|\gG_y g(x_t, y_t^{K-1})\|_{y_t^{K-1}}^2}{ b_K L_{g}} \nnub \\
\le & d^2(y_t^{K-1}, y^*(x_t)) + \zeta\frac{\|\gG_y g(x_t, y_t^{K-1})\|_{y_t^{K-1}}^2}{2 b_K \zeta L_{g}\frac{L_{g}}{\mu}} -  \frac{\|\gG_y g(x_t, y_t^{K-1})\|_{y_t^{K-1}}^2}{ b_K L_{g}} \nnub \\
\le & d^2(y_t^{K-1}, y^*(x_t)) - \frac{(1-{\mu}/{2L_{g}})}{L_{g}} \frac{\|\gG_y g(x_t, y_t^{K-1})\|^2}{ b_K } \le d^2(y_t^{k_1}, y^*(x_t)) - \frac{(1 - {\mu}/{2L_{g}})}{L_{g}}\sum_{k=k_1}^{K-1} \frac{\|\gG_y g(x_t, y_t^{k})\|^2}{ b_{k+1}}, 
\end{align}
\endgroup
where the second and third inequalities follow from Lemma \ref{lem:co-coer} and $ b_K  \ge C_b \ge 2\zeta L_{g}\frac{L_{g}}{\mu}$, respectively. 

By \eqref{equ:distyk1}, we have
\begingroup
\allowdisplaybreaks
\begin{align*}
(1-\frac{\mu}{2L_{g}})\sum_{k=k_1}^{K-1}\frac{\|\gG_y g(x_t, y_t^{k})\|_{y_t^{k}}^2}{ b_{k+1}} \le& L_{g}\left(d^2(y_t^{k_1}, y^*(x_t)) - d^2(y_t^{K}, y^*(x_t))\right) \le L_{g}d^2(y_t^{k_1}, y^*(x_t)) \nnub \\
\le & L_{g}\left(\frac{2\epsilon_y}{\mu^2} + \frac{2L_{g}^2C_f^2}{\mu^2 a_0^2} + 2 \zeta\log\left(\frac{C_b}{b_0}\right) + \zeta\right)\nnub. 
\end{align*}
\endgroup
Then, by \eqref{equ:distyk3}, $\frac{1}{1-\mu/2L_{g}} \le 2$, and $b_{k_1} \le C_b$, we have
\begin{equation*}
b_K \le C_b + 2 L_{g}\left(\frac{2\epsilon_y}{\mu^2} + \frac{2L_{g}^2C_f^2}{\mu^2 a_0^2} + 2 \zeta\log\left(\frac{C_b}{b_0}\right) + \zeta\right). 
\end{equation*}
Denote $\bar{b} := C_b + 2 L_{g}\left(\frac{2\epsilon_y}{\mu^2} + \frac{2L_{g}^2C_f^2}{\mu^2 a_0^2} + 2 \zeta\log\left(\frac{C_b}{b_0}\right) + \zeta\right)$. We need to show that $\frac{\mu}{ \bar{b}} - \zeta \frac{L_{g}^2}{\bar{b}^2} > 0$. Under \eqref{equ:distyk2}, considering the monotonicity of $\frac{\mu}{ b} - \zeta \frac{L_{g}^2}{ b^2}$ w.r.t. $b$ when $b \ge 2\zeta L_{g}\frac{L_{g}}{\mu}$, it holds that $0 < \frac{\mu}{ \bar{b}} - \zeta \frac{L_{g}^2}{\bar{b}^2} \le \frac{\mu}{ b_K} - \zeta \frac{L_{g}^2}{ b_K^2}$ as $\bar{b} \ge b_K \ge 2\zeta L_{g}\frac{L_{g}}{\mu}$. Then, we have
\begin{equation}
\label{equ:distyk6}
d^2(y_t^{K}, y^*(x_t)) \le e^{-(\frac{\mu}{ \bar{b}} - \zeta \frac{L_{g}^2}{ \bar{b}^2}){(K-k_1)}}\left(\frac{2\epsilon_y}{\mu^2} + \frac{2L_{g}^2C_f^2}{\mu^2 a_0^2} + 2 \zeta\log\left(\frac{C_b}{b_0}\right) + \zeta\right) = e^{-(\frac{\mu}{ \bar{b}} - \zeta \frac{L_{g}^2}{ \bar{b}^2}){(K-k_1)}}\frac{\bar{b}-C_b}{2L_{g}}.
\end{equation}
Since $k_1 \le \frac{\log(C_b^2/ b_0^2)}{\log(1+\epsilon_y/C_b^2)}$, by the definition of $\bar{K}$ in \eqref{equ:barK}, it holds that $\bar{K} > k_1$ as $\tilde{b}\ge 1 > \epsilon_y$. If $\frac{L_{g}(\bar{b}-C_b)}{2} \le 1$, it holds that $\tilde{b} = 1$ in \eqref{equ:barK}. Then, by \eqref{equ:distyk6}, we have
\begingroup
\allowdisplaybreaks
\begin{align}
\label{equ:<epsiy1}
\|\gG_y g(x_t,y_t^{\bar{K}})\|_{y_t^{\bar{K}}}^2 \le& L_{g}^2 d^2(y_t^{\bar{K}}, y^*(x_t))\le e^{-(\frac{\mu}{ \bar{b}} - \zeta \frac{L_{g}^2}{ \bar{b}^2}){(\bar{K}-k_1)}}\frac{L_{g}(\bar{b}-C_b)}{2} \nnub \\
\overset{(a)}{\le} & e^{-(\frac{\mu}{ \bar{b}} - \zeta \frac{L_{g}^2}{ \bar{b}^2}){\frac{1}{({\mu}/{\bar{b}} - \zeta {L_{g}^2}/{\bar{b}^2})}\log\left(\frac{1}{\epsilon_y}\right)}}\frac{L_{g}(\bar{b}-C_b)}{2} \overset{(b)}{\le} e^{-(\frac{\mu}{ \bar{b}} - \zeta \frac{L_{g}^2}{ \bar{b}^2}){\frac{1}{({\mu}/{\bar{b}} - \zeta {L_{g}^2}/{\bar{b}^2})}\log\left(\frac{1}{\epsilon_y}\right)}} = \epsilon_y,
\end{align}
\endgroup
where (a) follows from the definition of $\bar{K}$, and (b) follows from $\frac{L_{g}(\bar{b}-C_b)}{2} \le 1$.

If $\frac{L_{g}(\bar{b}-C_b)}{2} > 1$, it holds that $\tilde{b} = \frac{L_{g}(\bar{b}-C_b)}{2}$ in \eqref{equ:barK}, and we have
\begingroup
\allowdisplaybreaks
\begin{align}
\label{equ:<epsiy2}
\|\gG_y g(x_t,y_t^{\bar{K}})\|_{y_t^{\bar{K}}}^2 \le& L_{g}^2 d^2(y_t^{\bar{K}}, y^*(x_t)) \le e^{-(\frac{\mu}{ \bar{b}} - \zeta \frac{L_{g}^2}{ \bar{b}^2}){(\bar{K}-k_1)}}\frac{L_{g}(\bar{b}-C_b)}{2} \nnub \\
\le & e^{-(\frac{\mu}{ \bar{b}} - \zeta \frac{L_{g}^2}{ \bar{b}^2}){\frac{1}{({\mu}/{\bar{b}} - \zeta {L_{g}^2}/{\bar{b}^2})}\log\left(\frac{{L_{g}(\bar{b}-C_b)}/{2}}{\epsilon_y}\right)}}\frac{L_{g}(\bar{b}-C_b)}{2} = \epsilon_y,
\end{align}
\endgroup
Above all, we conclude that after at most $\bar{K}$ iterations, the condition $\|\gG_y g(x_t,y_t^{\bar{K}})\|_{y_t^{\bar{K}}}^2 \le \epsilon_y$ is satisfied, i.e., $K_t\le \bar{K}$ holds for all $t\ge 0$. We complete the proof.

\textbf{AdaRHD-GD:} We show that $N_t \le \bar{N}_{gd}$ for all $0 \le t \le T$. 

\textbf{If $n_1$ in Proposition \ref{prop:TKN} does not exist}, we have $ c_{N_t} \le C_c$. Similar to $K_t$ of the lower-level problem, we have $N_t \le \frac{\log(C_c^2/ c_0^2)}{\log(1+\epsilon_v/C_c^2)}$. If $N_t > \frac{\log(C_c^2/ c_0^2)}{\log(1+\epsilon_v/C_c^2)}$, by $\|\nabla_v R(x_t,y_t^{K_t},v_t^{n})\|_{y_t^{K_t}}^2 \ge \epsilon_v$ and $c_n \le C_c$ hold for all $n < N_t$, we have
\begingroup
\allowdisplaybreaks
\begin{align}
c_{N_t}^2 =&  c_{N_t-1}^2 + \|\nabla_v R(x_t,y_t^{K_t},v_t^{N_t-1})\|_{y_t^{K_t}}^2 =  c_{N_t-1}^2\left(1+\frac{\|\nabla_v R(x_t,y_t^{K_t},v_t^{N_t-1})\|_{y_t^{K_t}}^2}{ c_{N_t-1}^2}\right) \nnub \\
\ge&  c_0^2 \prod_{n=0}^{N_t-1}\left(1+\frac{\|\nabla_v R(x_t,y_t^{K_t},v_t^{n})\|_{y_t^{K_t}}^2}{ c_{n}^2}\right) \ge  c_0^2 \left(1+\frac{\epsilon_v}{C_c^2}\right)^{N_t} > C_c^2,\nnub
\end{align}
\endgroup
which contradicts $ c_{N_t} \le C_c$.

\textbf{If $n_1$ in Proposition \ref{prop:TKN} exists} and $N_t \ge n_1$. Then, it holds that $ c_{n_1} \le C_c$ and $ c_{n_1+1} > C_c$. Similarly, we have $n_1 \le \frac{\log(C_c^2/ c_0^2)}{\log(1+\epsilon_v/C_c^2)}$.

By the update rule of $v_t^{n_1}$ and the definition of $\hat{v}^*(x_t, y_t^{K_t}) = \argmin_{v\in T_{y_t^{K_t}}\gM_y} R(x_t, y_t^{K_t}, v)$, we have
\begingroup
\allowdisplaybreaks
\begin{align}
\label{equ:Nt1}
&\left\|v_t^{n_1} - \hat{v}^*(x_t, y_t^{K_t})\right\|_{y_t^{K_t}}^2
= \left\|v_t^{n_1-1} - \frac{\nabla_v R(x_t,y_t^{K_t},v_t^{n_1-1})}{ c_{n_1}} - \hat{v}^*(x_t, y_t^{K_t})\right\|_{y_t^{K_t}}^2 \nnub \\
=& \left\|v_t^{n_1-1} - \hat{v}^*(x_t, y_t^{K_t})\right\|_{y_t^{K_t}}^2 + \left\|\frac{\nabla_v R(x_t,y_t^{K_t},v_t^{n_1-1})}{ c_{n_1}}\right\|_{y_t^{K_t}}^2 \nnub \\
& - \frac{2}{ c_{n_1}}\left\langle v_t^{n_1-1} - \hat{v}^*(x_t, y_t^{K_t}), \nabla_v R(x_t,y_t^{K_t},v_t^{n_1-1})\right\rangle_{y_t^{K_t}} \nnub \\
\overset{(a)}{\le} & \left\|v_t^{n_1-1} - \hat{v}^*(x_t, y_t^{K_t})\right\|_{y_t^{K_t}}^2 + \left\|\frac{\nabla_v R(x_t,y_t^{K_t},v_t^{n_1-1})}{ c_{n_1}}\right\|_{y_t^{K_t}}^2 \nnub \\
& - \frac{2}{ c_{n_1}L_{g}}\left\| \nabla_v R(x_t,y_t^{K_t},v_t^{n_1-1}) - \nabla_v R\left(x_t,y_t^{K_t},\hat{v}^*(x_t, y_t^{K_t})\right)\right\|_{y_t^{K_t}}^2 \nnub \\
\le & \left\|v_t^{n_1-1} - \hat{v}^*(x_t, y_t^{K_t})\right\|_{y_t^{K_t}}^2 + \left\|\frac{\nabla_v R(x_t,y_t^{K_t},v_t^{n_1-1})}{ c_{n_1}}\right\|_{y_t^{K_t}}^2 \nnub \\
\le & \left\|v_t^{0} - \hat{v}^*(x_t, y_t^{K_t})\right\|_{y_t^{K_t}}^2 + \sum_{n=0}^{n_1-1}\left\|\frac{\nabla_v R(x_t,y_t^{K_t},v_t^{n})}{ c_{n_1}}\right\|_{y_t^{K_t}}^2 \nnub \\
\overset{(b)}{\le}& \left\|\gP_{y_{t-1}^{K_{t-1}}}^{y_t^{K_t}} v_{t-1}^{N_{t-1}} - \hat{v}^*(x_t, y_t^{K_t})\right\|_{y_t^{K_t}}^2 + \sum_{n=0}^{n_1-1}\frac{\|\nabla_v R(x_t,y_t^{K_t},v_t^{n})\|_{y_t^{K_t}}^2/ c_0^2}{\sum_{i=0}^{n}\|\nabla_v R(x_t,y_t^{K_t},v_t^{i})\|_{y_t^{K_t}}^2/ c_0^2} \nnub \\
\overset{(c)}{\le}& 2\left\|v_{t-1}^{N_{t-1}} - \hat{v}^*(x_{t-1}, y_{t-1}^{K_{t-1}})\right\|_{y_{t-1}^{K_{t-1}}}^2 + 2\left\|\hat{v}^*(x_{t-1}, y_{t-1}^{K_{t-1}}) - \gP_{y_t^{K_t}}^{y_{t-1}^{K_{t-1}}}\hat{v}^*(x_t, y_{t}^{K_{t}})\right\|_{y_{t-1}^{K_{t-1}}}^2 \nnub \\
& + \log\left(\sum_{n=0}^{n_1-1}\|\nabla_v R(x_t,y_t^{K_t},v_t^{n})\|_{y_t^{K_t}}^2/ c_0^2\right)+1 \nnub \\
\le & 2\left\|v_{t-1}^{K_{t-1}} - \hat{v}^*(x_{t-1}, y_{t-1}^{K_{t-1}})\right\|_{y_{t-1}^{K_{t-1}}}^2 + 4\left\|\hat{v}^*(x_{t-1}, y_{t-1}^{K_{t-1}})\right\|_{y_{t-1}^{K_{t-1}}}^2 + 4\left\|\hat{v}^*(x_t, y_{t}^{K_{t}})\right\|_{y_t^{K_t}}^2 \nnub \\
& + \log\left(\sum_{n=0}^{n_1-1}\|\nabla_v R(x_t,y_t^{K_t},v_t^{n})\|_{y_t^{K_t}}^2/ c_0^2\right)+1 \nnub \\
\overset{(d)}{\le}& \frac{2\epsilon_v}{\mu^2} + \frac{8l_{f}^2}{\mu^2} + 2\log\left(\frac{C_c}{c_0}\right) + 1, 
\end{align}
\endgroup
where (a) follows from Baillon-Haddad Theorem \citep[Theorem 5.8 (iv)]{beck2017first}, i.e., for $y\in\gM_y$ and $v\in T_y\gM_y$, it holds that
\begin{equation}
\label{equ:Baillon-Haddad}
\left\langle v - \hat{v}^*(x_t, y), \nabla_v R(x_t,y,v)- \nabla_v R(x_t,y,\hat{v}^*(x_t, y))\right\rangle_{y} \ge \frac{1}{ L_{g}}\left\| \nabla_v R(x_t,y,v) - \nabla_v R\left(x_t,y,\hat{v}^*(x_t, y)\right)\right\|_{y}^2,
\end{equation}
(b) follows from the warm start of $v_t^0$, (c) uses Lemma \ref{lem:sum<log}, and (d) follows from Lemmas \ref{lem:Rproperty} \eqref{lem:Rpropertyv*x} and \ref{lem:yty*vtv*}, and $ c_{n_1} \le C_c$.

Then, for all $N > n_1$, we have  
\begingroup
\allowdisplaybreaks
\begin{align}
&\left\|v_t^{N} - \hat{v}^*(x_t, y_t^{K_t})\right\|_{y_t^{K_t}}^2 \nnub \\
=& \left\|v_t^{N-1} - \hat{v}^*(x_t, y_t^{K_t})\right\|_{y_t^{K_t}}^2 + \frac{\|\nabla_v R(x_t,y_t^{K_t},v_t^{N-1})\|_{y_t^{K_t}}^2}{ c_n ^2} - \frac{2\left\langle v_t^{N-1} - \hat{v}^*(x_t, y_t^{K_t}), \nabla_v R(x_t,y_t^{K_t},v_t^{N-1})\right\rangle_{y_t^{K_t}}}{ c_n } \nnub \\
\overset{(a)}{\le} & \left\|v_t^{N-1} - \hat{v}^*(x_t, y_t^{K_t})\right\|_{y_t^{K_t}}^2 -\frac{1}{ c_n }\left(2-\frac{L_{g}}{ c_n }\right)\left\langle v_t^{N-1} - \hat{v}^*(x_t, y_t^{K_t}), \nabla_v R(x_t,y_t^{K_t},v_t^{N-1})\right\rangle_{y_t^{K_t}} \nnub \\
\overset{(b)}{\le} & \left\|v_t^{N-1} - \hat{v}^*(x_t, y_t^{K_t})\right\|_{y_t^{K_t}}^2 - \frac{1}{ c_n }\left\langle v_t^{N-1} - \hat{v}^*(x_t, y_t^{K_t}), \nabla_v R(x_t,y_t^{K_t},v_t^{N-1})\right\rangle_{y_t^{K_t}} \nnub \\
\overset{(c)}{\le} & \left(1-\frac{\mu}{ c_n }\right)\left\|v_t^{N-1} - \hat{v}^*(x_t, y_t^{K_t})\right\|_{y_t^{K_t}}^2 \overset{(d)}{\le} e^{-{\mu(N-n_1)}/{ c_n }}\left\|v_t^{n_1} - \hat{v}^*(x_t, y_t^{K_t})\right\|_{y_t^{K_t}}^2  \nnub \\
\overset{(e)}{\le}& e^{-{\mu(N-n_1)}/{ c_n }}\left(\frac{2\epsilon_v}{\mu^2} + \frac{8l_{f}^2}{\mu^2} + 2\log\left(\frac{C_c}{c_0}\right) + 1\right)\nnub, 
\end{align}
\endgroup
where (a) follows from \eqref{equ:Baillon-Haddad}, (b) follows from $ c_n  > C_c \ge L_{g}$, (c) follows from $\nabla_v R(x_t,y_t^{K_t},\hat{v}^*(x_t, y_t^{K_t}))=0$ and the $\mu$-strong convexity of $R$, (d) follows from $ c_n  \ge C_c \ge L_{g} \ge \mu$ and $1-x \le e^{-x}$ for $0<x<1$, and
(e) follows from \eqref{equ:Nt1}.

Similar to \eqref{equ:distyk3}, we have
\begin{equation}
\label{equ:Nt4}
c_N  =  c_{N-1} + \frac{\|\nabla_v R(x_t, y_t^{K_t}, v_t^{N-1})\|_{y_t^{K_t}}^2}{ c_N + c_{N-1}} \le  c_{n_1} + \sum_{n = n_1}^{N-1}\frac{\|\nabla_v R(x_t, y_t^{K_t}, v_t^{n})\|_{y_t^{K_t}}^2}{ c_{n+1}}.
\end{equation}
For the second term of \eqref{equ:Nt4}, we note that
\begingroup
\allowdisplaybreaks
\begin{align}
&\left\|v_t^{N} - \hat{v}^*(x_t, y_t^{K_t})\right\|_{y_t^{K_t}}^2\nnub\\ 
=& \left\|v_t^{N-1} - \hat{v}^*(x_t, y_t^{K_t})\right\|_{y_t^{K_t}}^2 + \frac{\|\nabla_v R(x_t, y_t^{K_t}, v_t^{N-1})\|_{y_t^{K_t}}^2}{ c_N ^2}  - \frac{2\left\langle v_t^{N} - \hat{v}^*(x_t, y_t^{K_t}), \nabla_v R(x_t, y_t^{K_t}, v_t^{N-1})\right\rangle_{y_t^{K_t}}}{ c_N } \nnub \\
\le & \left\|v_t^{N-1} - \hat{v}^*(x_t, y_t^{K_t})\right\|_{y_t^{K_t}}^2 + \frac{\|\nabla_v R(x_t, y_t^{K_t}, v_t^{N-1})\|_{y_t^{K_t}}^2}{ c_N^2} \nnub \\
& - \frac{2\left\|\nabla_v R(x_t, y_t^{K_t}, v_t^{N-1}) - \nabla_v R\left(x_t, y_t^{K_t}, \hat{v}^*(x_t, y_t^{K_t})\right)\right\|_{y_t^{K_t}}^2}{ c_N L_{g}} \nnub \\
\le & \left\|v_t^{N-1} - \hat{v}^*(x_t, y_t^{K_t})\right\|_{y_t^{K_t}}^2 - \frac{\left\|\nabla_v R(x_t, y_t^{K_t}, v_t^{N-1})\right\|_{y_t^{K_t}}^2}{ c_N L_{g}} \nnub \\
\le & \left\|v_t^{n_1} - \hat{v}^*(x_t, y_t^{K_t})\right\|_{y_t^{K_t}}^2 - \sum_{n = n_1}^{N-1}\frac{\left\|\nabla_v R(x_t, y_t^{K_t}, v_t^{n})\right\|_{y_t^{K_t}}^2}{ c_{n+1}L_{g}}\nnub,
\end{align}
\endgroup
where the first inequality follows from \eqref{equ:Baillon-Haddad}, and the second inequality follows from $ c_N \ge C_c \ge L_{g}$. 

Then, by \eqref{equ:Nt1}, it holds that
\begingroup
\allowdisplaybreaks
\begin{align}
&\sum_{n = n_1}^{N-1}\frac{\left\|\nabla_v R(x_t, y_t^{K_t}, v_t^{n})\right\|_{y_t^{K_t}}^2}{ c_{n+1}} 
\le  L_{g}\left(\left\|v_t^{n_1} - \hat{v}^*(x_t, y_t^{K_t})\right\|_{y_t^{K_t}}^2 - \left\|v_t^{N} - \hat{v}^*(x_t, y_t^{K_t})\right\|_{y_t^{K_t}}^2\right) \nnub \\
\le & L_{g}\left\|v_t^{n_1} - \hat{v}^*(x_t, y_t^{K_t})\right\|_{y_t^{K_t}}^2 \le L_{g}\left(\frac{2\epsilon_y}{\mu^2} + \frac{8l_{f}^2}{\mu^2} + 2\log\left(\frac{C_c}{c_0}\right) + 1\right)\nnub. 
\end{align}
\endgroup
Combining \eqref{equ:Nt4} derives
\begin{equation*}
c_n  \le C_c + L_{g}\left(\frac{2\epsilon_y}{\mu^2} + \frac{8l_{f}^2}{\mu^2} + 2\log\left(\frac{C_c}{c_0}\right) + 1\right). 
\end{equation*}
Denote $\bar{c} := C_c + L_{g}\left(\frac{2\epsilon_y}{\mu^2} + \frac{8l_{f}^2}{\mu^2} + 2\log\left(\frac{C_c}{c_0}\right) + 1\right)$. Then, considering the monotonicity of $\frac{\mu}{c}$ w.r.t. $c$, we have
\begin{equation*}
\left\|v_t^{N} - \hat{v}^*(x_t, y_t^{K_t})\right\|_{y_t^{K_t}}^2 \le e^{-{\mu(N_t-n_1)}/{ \bar{c}}}\left(\frac{2\epsilon_y}{\mu^2} + \frac{8l_{f}^2}{\mu^2} + 2\log\left(\frac{C_c}{c_0}\right) + 1\right) = e^{-{\mu(N_t-n_1)}/{ \bar{c}}}\frac{\bar{c}-C_c}{L_{g}}.
\end{equation*}
Since $n_1 \le \frac{\log(C_c^2/ c_0^2)}{\log(1+\epsilon_v/C_c^2)}$, by the definition of $\bar{N}_{gd}$ in \eqref{equ:barN}, it holds that $\bar{N}_{gd} > n_1$. Then, similar to \eqref{equ:<epsiy1} and \eqref{equ:<epsiy2}, we have
\begin{equation*}
\|\nabla_v R(x_t,y_t^{K_t},v_t^{\bar{N}_{gd}})\|_{y_t^{K_t}}^2 \le L_{g}^2\left\|v_t^{\bar{N}_{gd}} - \hat{v}^*(x_t, y_t^{K_t})\right\|_{y_t^{K_t}}^2 \le e^{-{\mu(\bar{N}_{gd} - n_1)}/{ \bar{c}}}L_{g}( \bar{c} - C_c) \le \epsilon_v,
\end{equation*}
where the last inequality follows from the definition of $\bar{N}_{gd}$. This indicates that, after at most $\bar{N}_{gd}$ iterations, the condition $\|\nabla_v R(x_t,y_t^{K_t},v_t^{\bar{N}_{gd}})\|_{y_t^{K_t}}^2 \le \epsilon_v$ is satisfied, i.e, $N_t \le \bar{N}_{gd}$ holds for all $t\ge 0$.

\textbf{AdaRHD-CG:} We show that $N_t \le \bar{N}_{cg}$ for all $0 \le t \le T$.

Denote $\kappa_{g} = \frac{L_{g}}{\mu}$. From \cite[Equation (6.19)]{boumal2023introduction}, given $x_t\in\gM_x$ and $y_t^{N_t}\in\gM_y$, we have
\begin{equation*}
\|v_t^{N_t} - \hat{v}_t^*(x_t,y_t^{N_t})\|^2_{y_t^{N_t}} \le 4 \kappa_{g} \left( \frac{\sqrt{\kappa_{g}} - 1}{\sqrt{\kappa_{g}} + 1} \right)^{2N_t} \|\hat{v}_t^{0} - \hat{v}_t^*(x_t,y_t^{N_t}) \|^2_{y_t^{N_t}}
\le 4 \kappa_{g} \left( \frac{\sqrt{\kappa_{g}} - 1}{\sqrt{\kappa_{g}} + 1} \right)^{2N_t} \frac{l_f^2}{\mu^2},
\end{equation*}
where the last inequality we use the setting of that $\hat{v}_t^{0} = 0$ and Lemma \ref{lem:Rproperty} \eqref{lem:Rpropertyv*x}. Then we have
\[
\|\nabla_v R(x_t,y_t^{K_t},v_t^{N_t})\|_{y_t^{K_t}}^2 \le L_{g}^2\left\|v_t^{N_t} - \hat{v}^*(x_t, y_t^{K_t})\right\|_{y_t^{K_t}}^2 \le 4 \kappa_{g} L_{g}^2 \frac{l_f^2}{\mu^2} \left( \frac{\sqrt{\kappa_{g}} - 1}{\sqrt{\kappa_{g}} + 1} \right)^{2N_t},
\]
which implies that, after at most $\bar{N}_{cg}$ iterations, it holds that $\|\nabla_v R(x_t,y_t^{K_t},v_t^{N_t})\|_{y_t^{K_t}}^2 \le \epsilon_v$, where
\[
\bar{N}_{cg} = \frac{1}{2}{ \log\left( \frac{4 \kappa_g L_g^2 l_f^2}{\mu^2 \epsilon_v} \right) }/{ \log\left( \frac{ \sqrt{\kappa_g} + 1 }{ \sqrt{\kappa_g} - 1 } \right) } = \frac{1}{2}{ \log\left( \frac{4 L_g^3 l_f^2}{\mu^3 \epsilon_v} \right) }/{ \log\left( \frac{ \sqrt{L_{g}/\mu} + 1 }{ \sqrt{L_{g}/\mu} - 1 } \right) }.
\]
We complete the proof.
\end{proof}

\subsection{Proof of Theorem \ref{thm:conv}}
\begin{proof}
\textbf{If $t_1$ in Proposition \ref{prop:TKN} does not exist}, we have $ a_T \le C_a$. Then, by \eqref{equ:lem:impro onestepmain1} in Lemma \ref{lem:impro onestep}, for $t<T$, we have 
\begin{equation*}
\frac{\|\gG F(x_t)\|_{x_t}^2}{ a_{t+1}} \le 2\left(F(x_t) - F(x_{t+1})\right) + \frac{L_{F}}{ a_{t+1}^2}\|\widehat \gG F(x_t, y_t^{K_t}, v_t^{N_t})\|_{x_t}^2 + \frac{\epsilon_{y,v}}{ a_{t+1}}, 
\end{equation*}
where $\epsilon_{y,v}$ is defined in Lemma \ref{lem:impro onestep}. Summing it from $t=0$ to $T-1$, we have
\begingroup
\allowdisplaybreaks
\begin{align}
\label{equ:thm:conv1}
\frac{1}{T}\sum_{t=0}^{T-1} \frac{\|\gG F(x_t)\|_{x_t}^2}{ a_{t+1}} \le& \frac{2}{T}\left(F(x_0) - F(x_{T})\right) + \frac{L_{F}}{ a_{0}^2}\frac{1}{T}\sum_{t=0}^{T-1}\left\|\widehat \gG F(x_t, y_t^{K_t}, v_t^{N_t})\right\|_{x_t}^2 + \frac{1}{T}\sum_{t=0}^{T-1}\frac{\epsilon_{y,v}}{ a_{t+1}} \nnub \\
\le & \frac{1}{T}\left(2\left(F(x_0) - F^*\right) + \frac{L_{F}C_a^2}{ a_{0}^2}\right) + \frac{\epsilon_{y,v}}{ a_0} = \frac{F_0}{T} + \frac{\epsilon_{y,v}}{ a_0},
\end{align}
\endgroup
where the second inequality follows from $\sum_{t=0}^{T-1}\|\widehat \gG F(x_t, y_t^{K_t}, v_t^{N_t})\|_{x_t}^2 \le a_T^2 \le C_{a}^2$, and $F_0$ is defined in \eqref{equ:lem:atc0}.

\textbf{If $t_1$ in Proposition \ref{prop:TKN} exists}, for any $t < t_1$, by \eqref{equ:lem:impro onestepmain1} in Lemma \ref{lem:impro onestep}, we have 
\begin{equation}
\label{equ:thm:conv2}
\frac{\|\gG F(x_t)\|_{x_t}^2}{ a_{t+1}} \le 2\left(F(x_t) - F(x_{t+1})\right) + \frac{L_{F}}{ a_{t+1}^2}\|\widehat \gG F(x_t, y_t^{K_t}, v_t^{N_t})\|_{x_t}^2 + \frac{\epsilon_{y,v}}{ a_{t+1}}. 
\end{equation}
For any $t \ge t_1$, by \eqref{equ:lem:impro onestepmain2} in Lemma \ref{lem:impro onestep}, we have
\begin{equation}
\label{equ:thm:conv3}
\frac{\|\gG F(x_t)\|_{x_t}^2}{ a_{t+1}} \le 2\left(F(x_t) - F(x_{t+1})\right) + \frac{\epsilon_{y,v}}{ a_{t+1}}. 
\end{equation}
Summing \eqref{equ:thm:conv2} and \eqref{equ:thm:conv3} from $0$ to $T-1$, we have
\begingroup
\allowdisplaybreaks
\begin{align}
\frac{1}{T}\sum_{t=0}^{T-1} \frac{\|\gG F(x_t)\|_{x_t}^2}{ a_{t+1}} =& \frac{1}{T}\left(\sum_{t=0}^{t_1-1} \frac{\|\gG F(x_t)\|_{x_t}^2}{ a_{t+1}} + \sum_{t=t_1}^{T-1} \frac{\|\gG F(x_t)\|_{x_t}^2}{ a_{t+1}}\right) \nnub \\
\le & \frac{1}{T}\left(\sum_{t=0}^{T - 1}2\left(F(x_t) - F(x_{t+1})\right) + \frac{L_{F}}{ a_{0}^2}\sum_{t=0}^{t_1-1}\left\|\widehat \gG F(x_t, y_t^{K_t}, v_t^{N_t})\right\|_{x_t}^2 + \sum_{t=0}^{T -1}\frac{\epsilon_{y,v}}{ a_{t+1}} \right) \nnub \\
\le & \frac{1}{T}\left(2\left(F(x_0) - F^*\right) + \frac{L_{F}C_a^2}{ a_{0}^2}\right) + \frac{\epsilon_{y,v}}{ a_0} = \frac{F_0}{T} + \frac{\epsilon_{y,v}}{a_0}\nnub,
\end{align}
\endgroup
where the last inequality follows from Assumption \ref{ass:inf} and $a_{t_1} \le C_a$, and $F_0$ is defined in Lemma \ref{lem:at}. This result is equivalent to \eqref{equ:thm:conv1}.

Then, since $a_{t+1} \le a_T$, by Lemma \ref{lem:at}, we have
\begin{equation}
\label{equ:thm:conv5}
\frac{1}{T}\sum_{t=0}^{T-1}\|\gG F(x_t)\|_{x_t}^2
\le \left(\frac{F_0}{T} + \frac{\epsilon_{y,v}}{a_0}\right) a_T
\le \left(\frac{F_0}{T} + \frac{\epsilon_{y,v}}{a_0}\right) \left(C_a + 2F_0 + \frac{2T\epsilon_{y,v}}{ a_{0}}\right).
\end{equation}
Since $\epsilon_y = 1/T$ and $\epsilon_v = 1/T$, by the definition of $\epsilon_{y,v} := \frac{\bar{L}^2}{\mu^2}(\epsilon_y + \epsilon_v)$ in Lemma \ref{lem:impro onestep}, we have $\epsilon_{y,v} = \frac{2\bar{L}^2}{T\mu^2}$. Then, by the definition of $F_0$, \eqref{equ:thm:conv5} is equivalent to
\begin{equation*}
\frac{1}{T}\sum_{t=0}^{T-1}\|\gG F(x_t)\|_{x_t}^2 \le \frac{\left(F_0 + {2\bar{L}^2}/{a_0\mu^2}\right) \left(C_a + 2\left(F_0 + {2\bar{L}^2}/{a_0\mu^2}\right)\right)}{T} = \mathcal{O}\left(\frac{1}{T}\right).
\end{equation*}
Let $C := (F_0 + \frac{2\bar{L}^2}{a_0\mu^2}) (C_a + 2(F_0 + \frac{2\bar{L}^2}{a_0\mu^2}))$. The proof is complete.
\end{proof}

\subsection{Proof of Corollary \ref{coro:thm:conv}}
\begin{proof}
By Theorem \ref{thm:conv}, we have
\[
T = \mathcal{O}(1/\epsilon).
\]
For the resolution of the lower-level problem, we have
\begin{equation*}
K_t = \mathcal{O}\left(\frac{1}{\log(1+\epsilon)}\right) = \mathcal{O}\left(\frac{1}{\epsilon}\right). 
\end{equation*}
Similarly, for the resolution of the linear system, we have 

AdaRHD-GD:
\begin{equation*}
N_t = \mathcal{O}\left(\frac{1}{\epsilon}\right). 
\end{equation*}
AdaRHD-CG:
\begin{equation*}
N_t = \mathcal{O}\left(\log\frac{1}{\epsilon}\right). 
\end{equation*}
Then, it is evident that the gradient complexities of $f$ and $g$ are $G_f = \mathcal{O}(1/\epsilon)$ and $G_g = \mathcal{O}(1/\epsilon^2)$, respectively. The complexities of computing the second-order cross derivative and Hessian-vector product of $g$ are $JV_g = \mathcal{O}(1/\epsilon)$, $HV_g = \mathcal{O}(1/\epsilon^2)$ for AdaRHD-GD, and $HV_g = \tilde{\mathcal{O}}(1/\epsilon)$ for AdaRHD-CG, respectively.
\end{proof}

\section{Proofs for Section \ref{sec:retr}}
\label{sec:appe:proofs of retr}
This section provides the proofs of the results from Section \ref{sec:retr}. For consistency, we retain the notations introduced in Section \ref{sec:conv}, such as $ C_a $, $ C_b $, $ C_c $, etc.

Firstly, similar to Proposition \ref{prop:TKN}, we first give a lemma that concerns the step sizes $ a_1$, $ b_t$, and $ c_t$.
\begin{proposition}
\label{prop:TKN:retr}
Suppose that Assumptions \ref{ass:curvature}, \ref{ass:str}, \ref{ass:lips}, \ref{ass:bar D}, and \ref{ass:retrerror} hold. Denote $\{T,K,N\}$ as the iterations of $\{x,y,v\}$. Given any constants $C_a \ge a_0$, $C_b \ge b_0$, $C_c \ge c_0$, then, we have

AdaRHD-GD:
\begin{enumerate}[(1)]
\item either $ a_t \le C_a$ for any $t \le T$, or $\exists t_1 \le T$ such that $ a_{t_1} \le C_a$, $ a_{t_1+1} > C_a$; 
\item either $ b_t \le C_b$ for any $t \le K$, or $\exists k_1 \le K$ such that $ b_{k_1} \le C_b$, $ b_{k_1+1} > C_b$; 
\item either $ c_t \le C_c$ for any $t \le N$, or $\exists n_1 \le N$ such that $ c_{n_1} \le C_c$, $ c_{n_1+1} > C_c$.
\end{enumerate}

We then consider the case where the conjugate gradient solves the linear system. Therefore, we do not need to consider the step size $ c_t$.

AdaRHD-CG:
\begin{enumerate}[(1)]
\item either $ a_t \le C_a$ for any $t \le T$, or $\exists t_1 \le T$ such that $ a_{t_1} \le C_a$, $ a_{t_1+1} > C_a$; 
\item either $ b_t \le C_b$ for any $t \le K$, or $\exists k_1 \le K$ such that $ b_{k_1} \le C_b$, $ b_{k_1+1} > C_b$;
\end{enumerate}
\end{proposition}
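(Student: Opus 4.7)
The plan is to observe that this proposition is a purely structural dichotomy driven by the monotonicity of the three adaptive step-size sequences, and the choice of exponential versus retraction mapping plays no role in this fact. Concretely, from the definitions in Algorithm \ref{alg:retr},
\begin{equation*}
a_{t+1}^2 = a_t^2 + \|\widehat{\gG} F(x_t, y_t^{K_t}, v_t^{N_t})\|_{x_t}^2,\quad b_{k+1}^2 = b_k^2 + \|\gG_y g(x_t, y_t^k)\|_{y_t^k}^2,\quad c_{n+1}^2 = c_n^2 + \|\nabla_v R(x_t, y_t^{K_t}, v_t^n)\|_{y_t^{K_t}}^2.
\end{equation*}
Each increment added to $a_t^2$, $b_k^2$, or $c_n^2$ is nonnegative, so $\{a_t\}$, $\{b_k\}$, and $\{c_n\}$ are monotonically nondecreasing, exactly as in the exponential-map variant used for Proposition \ref{prop:TKN}.

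Given the hypotheses $C_a \ge a_0$, $C_b \ge b_0$, and $C_c \ge c_0$, the first claim follows from the well-ordering principle. For $\{a_t\}$, consider the set $S_a := \{t \le T : a_{t+1} > C_a\}$. If $S_a$ is empty, then $a_{t+1} \le C_a$ for all $t \le T-1$, and combined with $a_0 \le C_a$ we obtain $a_t \le C_a$ for all $t \le T$, giving the first alternative. Otherwise, set $t_1 := \min S_a$; by definition $a_{t_1+1} > C_a$, and by the minimality of $t_1$ (together with the case $t_1 = 0$, where $a_{t_1} = a_0 \le C_a$ directly) we have $a_{t_1} \le C_a$, giving the second alternative.

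The same argument applied to $\{b_k\}$ with threshold $C_b$ and to $\{c_n\}$ with threshold $C_c$ yields the corresponding dichotomies for the lower-level and linear-system step sizes in the AdaRHD-GD case. For the AdaRHD-CG case, the step size $c_n$ is not used, so only the dichotomies for $\{a_t\}$ and $\{b_k\}$ need to be stated, and their proofs are identical.

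There is essentially no technical obstacle here: Assumptions \ref{ass:bar D} and \ref{ass:retrerror} are not actually invoked in the proof of this proposition itself; they will only enter when these dichotomies are subsequently exploited in the proof of Proposition \ref{prop:KtNt:retr} (to control the post-threshold behavior via the trigonometric distance bound with $\bar\zeta := \zeta(\tau, \bar D)$ and to convert retraction updates into exponential-map displacements). Thus the argument is a verbatim transcription of the reasoning behind Proposition \ref{prop:TKN}, differing only in that $\Exp$ has been replaced by $\Retr$ in the underlying algorithm — a substitution that is invisible to the monotonicity argument.
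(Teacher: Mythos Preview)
Your proposal is correct and matches the paper's treatment: the paper does not give an explicit proof of Proposition~\ref{prop:TKN:retr} (nor of its exponential-map counterpart Proposition~\ref{prop:TKN}), instead presenting it as an immediate consequence of the monotonicity of the accumulated-norm step-size sequences, which is precisely the argument you have spelled out. Your observation that Assumptions~\ref{ass:bar D} and~\ref{ass:retrerror} are not actually needed here is also accurate; the paper lists them in the hypotheses only for uniformity with the downstream results.
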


\subsection{Proof of Proposition \ref{prop:KtNt:retr}}

\subsubsection{Improvement on objective function for one step update}
Similar to Lemma \ref{lem:impro onestep}, we have the following lemma.
\begin{lemma}
\label{lem:impro onestep:retr}
Suppose that Assumptions \ref{ass:curvature}, \ref{ass:str}, \ref{ass:lips}, \ref{ass:bar D}, and \ref{ass:retrerror} hold. Then, we have
\begin{equation*}
F(x_{t+1}) \le F(x_t) - \frac{1}{2 a_{t+1}}\|\gG F(x_t)\|_{x_t}^2 - \frac{1}{2 a_{t+1}}\left(1-\frac{\bar{L}_F}{ a_{t+1}}\right)\|\widehat \gG F(x_t, y_t^{K_t}, v_t^{N_t})\|_{x_t}^2 + \frac{\epsilon_{y,v}}{2 a_{t+1}}. 
\end{equation*}
Furthermore, if $t_1$ in Proposition \ref{prop:TKN:retr} exists, then for $t\ge t_1$, we have 
\begin{equation}
\label{equ:lem:impro onestepmain2:retr}
F(x_{t+1}) \le F(x_t) - \frac{1}{2 a_{t+1}}\|\gG F(x_t)\|_{x_t}^2 - \frac{1}{4 a_{t+1}}\|\widehat \gG F(x_t, y_t^{K_t}, v_t^{N_t})\|_{x_t}^2 + \frac{\epsilon_{y,v}}{2 a_{t+1}}, 
\end{equation}
where $\bar{L}_F = (L_{F}c_u + 2c_R(l_{f} + l_{f}L_{g}/\mu)$, $\epsilon_{y,v} = \frac{\bar{L}^2}{\mu^2}(\epsilon_y + \epsilon_v)$, $\bar{L}:= \max\{\sqrt{2}(L_{g}(1 + \frac{L_f}{\mu} + \frac{l_f \rho}{\mu^2}) + \frac{l_f \rho}{\mu}), \sqrt{2}L_{g}\}$, and $c_u, c_R$ are defined in Assumption \ref{ass:retrerror}.
\end{lemma}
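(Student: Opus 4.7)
The plan is to mirror the proof of Lemma \ref{lem:impro onestep} but to replace all uses of the exponential map with bounds derived from Assumption \ref{ass:retrerror}. Setting $u_t := -\frac{1}{a_{t+1}}\widehat \gG F(x_t, y_t^{K_t}, v_t^{N_t})$, I first apply the smoothness of $F$ in its exponential form (Proposition \ref{prop:lsmoothmustrong} together with Lemma \ref{lem:basicy*GF}\eqref{lem:basicy*GFGFx}) to get
\[
F(x_{t+1}) \le F(x_t) + \langle \gG F(x_t), \Exp_{x_t}^{-1}(x_{t+1})\rangle_{x_t} + \frac{L_F}{2}\, d^2(x_t, x_{t+1}).
\]
Since $x_{t+1} = \Retr_{x_t}(u_t)$, Assumption \ref{ass:retrerror} yields $d^2(x_t,x_{t+1}) \le c_u\|u_t\|_{x_t}^2$ and $\|\Exp_{x_t}^{-1}(x_{t+1}) - u_t\|_{x_t} \le c_R\|u_t\|_{x_t}^2$.

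Next I split the inner product as
\[
\langle \gG F(x_t), \Exp_{x_t}^{-1}(x_{t+1})\rangle_{x_t} = \langle \gG F(x_t), u_t\rangle_{x_t} + \langle \gG F(x_t), \Exp_{x_t}^{-1}(x_{t+1}) - u_t\rangle_{x_t}.
\]
Cauchy--Schwarz on the second piece together with the uniform bound $\|\gG F(x_t)\|_{x_t} \le l_f(1+L_g/\mu)$ from Lemma \ref{lem:basicy*GF}\eqref{lem:basicy*GFGFx} produces a term of order $\tfrac{c_R(l_f+l_f L_g/\mu)}{a_{t+1}^2}\|\widehat \gG F\|_{x_t}^2$. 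Combining this with the quadratic term gives a coefficient $\tfrac{L_F c_u}{2} + c_R(l_f+l_f L_g/\mu) = \tfrac{\bar L_F}{2}$ in front of $\tfrac{1}{a_{t+1}^2}\|\widehat \gG F\|_{x_t}^2$, matching the stated $\bar L_F$.

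Using the identity $-\langle \gG F, \widehat \gG F\rangle = -\tfrac{1}{2}\|\gG F\|^2 - \tfrac{1}{2}\|\widehat \gG F\|^2 + \tfrac{1}{2}\|\gG F - \widehat \gG F\|^2$ for the first piece, and then invoking Lemma \ref{lem:hgerror} together with Lemma \ref{lem:yty*vtv*} to bound $\|\gG F(x_t) - \widehat \gG F(x_t, y_t^{K_t}, v_t^{N_t})\|_{x_t}^2 \le \epsilon_{y,v}$ with $\bar L = \max\{\sqrt 2 L_1, \sqrt 2 L_g\}$ rewritten as in the statement, I obtain the first inequality of the lemma. The second inequality follows verbatim from the first once $a_{t+1} > C_a \ge 2\bar L_F$, which is precisely the case $t \ge t_1$ in Proposition \ref{prop:TKN:retr} (with the threshold $C_a$ updated to $\max\{2\bar L_F, a_0\}$).

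The main obstacle is bookkeeping: the constant $\bar L_F$ must be assembled so that it absorbs both the retraction distortion $c_u$ multiplying $L_F$ and the additional first-order retraction error $c_R\|u_t\|_{x_t}^2$ weighted by $\|\gG F(x_t)\|_{x_t}$. Once this is handled, the rest of the argument is parallel to Lemma \ref{lem:impro onestep}; in particular, the bound $\|\gG F - \widehat \gG F\|^2 \le \epsilon_{y,v}$ is unchanged because Lemma \ref{lem:hgerror} depends only on $d(y_t^{K_t}, y^*(x_t))$ and $\|v_t^{N_t} - \hat v_t^*(x_t, y_t^{K_t})\|_{y_t^{K_t}}$, both of which are still controlled by the termination criteria through Lemma \ref{lem:yty*vtv*} regardless of whether exponential or retraction maps are used in the outer update.
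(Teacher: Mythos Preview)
Your proposal is correct and follows essentially the same approach as the paper's proof: both start from the $L_F$-smoothness inequality in exponential form, split $\Exp_{x_t}^{-1}(x_{t+1})$ into $u_t$ plus the retraction error, bound the cross term via Cauchy--Schwarz using $\|\gG F(x_t)\|_{x_t}\le l_f(1+L_g/\mu)$ together with Assumption \ref{ass:retrerror}, assemble the coefficient $\bar L_F/2$, apply the polarization identity, and finish with Lemmas \ref{lem:hgerror} and \ref{lem:yty*vtv*}. Your observation that $C_a$ must be redefined as $\max\{2\bar L_F, a_0\}$ for the second inequality also matches the paper exactly.
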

\begin{proof} 
By Lemma \ref{lem:basicy*GF}, we have
\begingroup
\allowdisplaybreaks
\begin{align}
\label{equ:lem:impro onestep1:retr}
&F(x_{t+1}) \le F(x_t) + \left\langle\gG F(x_t), \Exp^{-1}_{x_t}(x_{t+1}) \right\rangle_{x_t} + \frac{L_{F}}{2}d(x_{t+1}, x_t)^2 \nnub \\
= & F(x_t) + \left\langle\gG F(x_t), \Exp^{-1}_{x_t}(x_{t+1}) - \Retr^{-1}_{x_t}(x_{t+1}) \right\rangle_{x_t} + \left\langle\gG F(x_t), \Retr^{-1}_{x_t}(x_{t+1}) \right\rangle_{x_t} + \frac{L_{F}}{2}d(x_{t+1}, x_t)^2 \nnub \\
\le & F(x_t) + c_R\left(l_{f} + l_{f}\frac{L_{g}}{\mu}\right)\|\Retr^{-1}_{x_t}(x_{t+1})\|_{x_t}^2 - \frac{1}{ a_{t+1}}\left\langle\gG F(x_t), \widehat \gG F\left(x_t, y_t^{K_t}, v_t^{N_t}\right) \right\rangle_{x_t} \nnub \\
& + \frac{L_F c_u}{2 a_{t+1}^2} \|\widehat \gG F\left(x_t, y_t^{K_t}, v_t^{N_t}\right)\|_{x_t}^2 \nnub \\
\le & F(x_t) + \frac{c_R\left(l_{f} + l_{f}L_{g}/\mu\right)}{ a_{t+1}^2}\|\widehat \gG F\left(x_t, y_t^{K_t}, v_t^{N_t}\right)\|_{x_t}^2 + \frac{L_F c_u}{2 a_{t+1}^2} \|\widehat \gG F\left(x_t, y_t^{K_t}, v_t^{N_t}\right)\|_{x_t}^2 \nnub \\
& -\frac{1}{ a_{t+1}}\|\gG F(x_t)\|_{x_t}^2 -\frac{1}{ a_{t+1}}\|\widehat \gG F\left(x_t, y_t^{K_t}, v_t^{N_t}\right)\|_{x_t}^2 + \frac{1}{ a_{t+1}} \|\gG F(x_t) - \widehat \gG F\left(x_t, y_t^{K_t}, v_t^{N_t}\right)\|_{x_t}^2  \nnub \\
= & F(x_t) - \frac{1}{2 a_{t+1}}\|\gG F(x_t)\|_{x_t}^2 - \frac{1}{2 a_{t+1}}\left(1-\frac{L_{F}c_u + 2c_R\left(l_{f} + l_{f}L_{g}/\mu\right)}{ a_{t+1}}\right)\|\widehat \gG F(x_t, y_t^{K_t}, v_t^{N_t})\|_{x_t}^2 \nnub\\
&+ \frac{1}{2 a_{t+1} }\left\|\gG F(x_t) - \widehat \gG F\left(x_t, y_t^{K_t}, v_t^{N_t}\right)\right\|_{x_t}^2 \nnub \\
\le & F(x_t) - \frac{1}{2 a_{t+1}}\|\gG F(x_t)\|_{x_t}^2 - \frac{1}{2 a_{t+1}}\left(1-\frac{L_{F}c_u + 2c_R\left(l_{f} + l_{f}L_{g}/\mu\right)}{ a_{t+1}}\right)\|\widehat \gG F(x_t, y_t^{K_t}, v_t^{N_t})\|_{x_t}^2 + \frac{\epsilon_{y,v}}{2 a_{t+1}},
\end{align}
\endgroup
where the second inequality uses Lemma \ref{lem:basicy*GF} \eqref{lem:basicy*GFGFx} and Assumption \ref{ass:retrerror}. 

\textbf{If $t_1$ in Proposition \ref{prop:TKN:retr} exists}, then for $t \ge t_1$, we have $ a_{t+1} > C_a \ge 2(L_{F}c_u + 2c_R(l_{f} + l_{f}L_{g}/\mu))$. The desired result of \eqref{equ:lem:impro onestepmain2:retr} follows from \eqref{equ:lem:impro onestep1:retr}. The proof is complete.
\end{proof}

Similar to \eqref{equ:Ca}, we define a threshold for parameters $a_t$ when giving an upper bound of the step size $a_t$.
\begin{equation*}
C_a := \max\{2\bar{L}_F,  a_0\}.
\end{equation*}
where $\bar{L}_F = (L_{F}c_u + 2c_R(l_{f} + l_{f}L_{g}/\mu)$ is defined in Lemmas \ref{lem:impro onestep:retr}.

Similar to Lemma \ref{lem:at}, we have the following upper bound for the step size $a_t$.
\begin{lemma}
\label{lem:at:retr}
Suppose that Assumptions \ref{ass:curvature}, \ref{ass:str}, \ref{ass:lips}, \ref{ass:inf}, \ref{ass:bar D}, and \ref{ass:retrerror} hold. If $t_1$ in Proposition \ref{prop:TKN:retr} does not exist, we have $ a_t \le C_a$ for any $t \le T$. 

If there exists $t_1 \le T$ described in Proposition \ref{prop:TKN:retr}, we have 
\begin{equation*}
\left\{
\begin{aligned}
a_t  \le & C_a, ~~ & t \le t_1; \\
a_t  \le & C_a + 2F_0 + \frac{2t\epsilon_{y,v}}{ a_{0}}, ~~ & t \ge t_1, 
\end{aligned}
\right.
\end{equation*}
where we define
\begin{equation}
\label{equ:lem:atc0:retr}
F_0 := 2\left(F(x_{0}) - F^*\right) + \frac{\bar{L}_{F}C_a^2}{ a_{0}^2}.
\end{equation}
\end{lemma}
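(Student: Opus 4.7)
The plan is to mirror the argument used in Lemma \ref{lem:at} for the exponential-mapping case, replacing the smoothness-type constant $L_F$ with $\bar{L}_F = L_F c_u + 2 c_R(l_f + l_f L_g/\mu)$ that now appears in the retraction version of the descent inequality (Lemma \ref{lem:impro onestep:retr}). Since $C_a := \max\{2\bar{L}_F, a_0\}$, the crossing condition $a_{t+1} > C_a$ will again force the factor $(1 - \bar{L}_F/a_{t+1})$ to be at least $1/2$, which is exactly what unlocks the simplified descent inequality \eqref{equ:lem:impro onestepmain2:retr}.

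First I would dispose of the easy case: if no index $t_1$ from Proposition \ref{prop:TKN:retr} exists, then by definition $a_t \le C_a$ for every $t \le T$, giving the first bound. In the remaining case, for $t \le t_1$ the bound $a_t \le C_a$ is immediate from the definition of $t_1$. For $t \ge t_1$, I would invoke the second clause \eqref{equ:lem:impro onestepmain2:retr} of Lemma \ref{lem:impro onestep:retr}, drop the nonnegative $\|\gG F(x_t)\|^2$ term, and rearrange to obtain
\begin{equation*}
\frac{\|\widehat{\gG} F(x_t, y_t^{K_t}, v_t^{N_t})\|_{x_t}^2}{a_{t+1}} \;\le\; 4\bigl(F(x_t) - F(x_{t+1})\bigr) + \frac{2\epsilon_{y,v}}{a_{t+1}}.
\end{equation*}
Summing this inequality from $t_1$ to $t$ telescopes the $F$-differences, leaving $4(F(x_{t_1}) - F(x_{t+1}))$ on the right.

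Next I would bound $F(x_{t_1})$ by telescoping the general descent inequality of Lemma \ref{lem:impro onestep:retr} from $0$ to $t_1 - 1$, which gives $F(x_{t_1}) \le F(x_0) + \sum_{i=0}^{t_1-1} \tfrac{\bar{L}_F}{2 a_{i+1}^2}\|\widehat{\gG} F(x_i,\cdot)\|_{x_i}^2 + \sum_{i=0}^{t_1-1}\tfrac{\epsilon_{y,v}}{2 a_{i+1}}$. Using $a_{i+1} \ge a_0$ to pull out $1/a_0^2$ and noting that $\sum_{i=0}^{t_1-1}\|\widehat{\gG} F(x_i,\cdot)\|_{x_i}^2 \le a_{t_1}^2 \le C_a^2$ from the AdaGrad recursion, together with $F(x_{t+1}) \ge F^*$ (Assumption \ref{ass:inf}), yields
\begin{equation*}
\sum_{i=t_1}^{t} \frac{\|\widehat{\gG} F(x_i,\cdot)\|_{x_i}^2}{a_{i+1}} \;\le\; 4(F(x_0) - F^*) + \frac{2\bar{L}_F C_a^2}{a_0^2} + \frac{2(t+1)\epsilon_{y,v}}{a_0} \;=\; 2F_0 + \frac{2(t+1)\epsilon_{y,v}}{a_0}.
\end{equation*}
Finally, the identity $a_{t+1} = a_t + \tfrac{\|\widehat{\gG} F(x_t,\cdot)\|_{x_t}^2}{a_{t+1} + a_t} \le a_t + \tfrac{\|\widehat{\gG} F(x_t,\cdot)\|_{x_t}^2}{a_{t+1}}$ telescopes down to $a_{t_1}$, and combining with the previous display and $a_{t_1} \le C_a$ delivers the stated bound.

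The main obstacle, as in the exponential case, is ensuring that the constants introduced by retraction (the $c_u$ and $c_R$ terms inside $\bar{L}_F$) propagate correctly through the definition of $F_0$ in \eqref{equ:lem:atc0:retr}; this is purely bookkeeping but must be checked so that the threshold $C_a \ge 2\bar{L}_F$ genuinely yields the factor $1/2$ in the descent step, rather than some weaker constant that would break the subsequent telescoping. No new analytic idea beyond Lemma \ref{lem:at} is required, because Assumption \ref{ass:retrerror} has already absorbed the retraction error into the descent inequality of Lemma \ref{lem:impro onestep:retr}.
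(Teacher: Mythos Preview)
Your proposal is correct and follows essentially the same approach as the paper, which explicitly states that the proof of Lemma \ref{lem:at:retr} ``closely parallels that of Lemma \ref{lem:at}, requiring only the substitution of $L_F$ with $\bar{L}_F$'' and omits the details. Your step-by-step outline matches the paper's proof of Lemma \ref{lem:at} with precisely this substitution, including the use of \eqref{equ:lem:impro onestepmain2:retr} for $t \ge t_1$, the telescoping bound on $F(x_{t_1})$, the AdaGrad recursion bound $\sum_{i=0}^{t_1-1}\|\widehat{\gG} F(x_i,\cdot)\|_{x_i}^2 \le a_{t_1}^2 \le C_a^2$, and the final telescoping identity for $a_{t+1}$.
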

The proof of Lemma \ref{lem:at:retr} closely parallels that of Lemma \ref{lem:at}, requiring only the substitution of $L_F$ with $\bar{L}_F$. Here we omit it.

Before proving Proposition \ref{prop:KtNt:retr}, similar to Lemma \ref{lem:conv of lower}, we present the following technical lemma when substituting the exponential mapping with the retraction mapping.
\begin{lemma}
\label{lem:conv of lower:retr}
Suppose that Assumptions \ref{ass:curvature}, \ref{ass:str}, \ref{ass:lips}, \ref{ass:bar D}, and \ref{ass:retrerror} hold. Then, we have
\[
d^2(y_t^{k+1}, y^*(x_t)) \le \left(1 + \frac{1}{ b_{k+1}^2} \bar{\zeta} L_{g}^2 - \frac{\mu}{b_{k+1}}\right) d^2(y_t^k, y^*(x_t)),
\]
where $\bar{\zeta} := \zeta c_u + 2\bar{D}c_{R}$.
\end{lemma}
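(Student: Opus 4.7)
The plan is to mirror the proof of Lemma \ref{lem:conv of lower} but carefully absorb the retraction error into the leading curvature constant. First I would apply Lemma \ref{lem:trig} with $x_a = y_t^k$, $x_b = y^*(x_t)$, $x_c = y_t^{k+1}$. With this labeling, $c = d(x_a,x_b) = d(y_t^k, y^*(x_t))$, whose curvature factor $\zeta(\tau, c)$ is bounded by $\zeta$ under Assumption \ref{ass:curvature} (consistent with the discussion following Assumption \ref{ass:bar D}). This yields
\[
d^2(y_t^{k+1}, y^*(x_t)) \le \zeta\, d^2(y_t^k, y_t^{k+1}) + d^2(y_t^k, y^*(x_t)) - 2\left\langle \Exp^{-1}_{y_t^k}(y^*(x_t)),\, \Exp^{-1}_{y_t^k}(y_t^{k+1}) \right\rangle_{y_t^k}.
\]

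The next step is to replace the two inverse exponentials by quantities involving only $u := -\tfrac{1}{b_{k+1}}\gG_y g(x_t, y_t^k)$. For the first term, Assumption \ref{ass:retrerror} gives $d^2(y_t^k, y_t^{k+1}) \le c_u \|u\|_{y_t^k}^2 = \tfrac{c_u}{b_{k+1}^2}\|\gG_y g(x_t, y_t^k)\|_{y_t^k}^2$. For the inner product, write $\Exp^{-1}_{y_t^k}(y_t^{k+1}) = u + e_t^k$ where, again by Assumption \ref{ass:retrerror}, $\|e_t^k\|_{y_t^k} \le c_R \|u\|_{y_t^k}^2$. Substituting and invoking Cauchy--Schwarz together with Assumption \ref{ass:bar D} (to bound $\|\Exp^{-1}_{y_t^k}(y^*(x_t))\|_{y_t^k} = d(y_t^k, y^*(x_t)) \le \bar D$) produces
\[
-2\left\langle \Exp^{-1}_{y_t^k}(y^*(x_t)), e_t^k \right\rangle_{y_t^k} \le 2 \bar D \, c_R \|u\|_{y_t^k}^2 = \frac{2\bar D\, c_R}{b_{k+1}^2}\|\gG_y g(x_t, y_t^k)\|_{y_t^k}^2.
\]
Collecting these, the coefficient of $\|\gG_y g(x_t, y_t^k)\|_{y_t^k}^2/b_{k+1}^2$ becomes exactly $\zeta c_u + 2\bar D c_R = \bar\zeta$, while the linear ``geodesic'' part of the inner product remains the same as in the exponential case.

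To finish, I would apply the geodesic strong convexity of $g$ (equation \eqref{equ:strongconv} applied at $y_t^k$ with endpoint $y^*(x_t)$) to get $\langle \gG_y g(x_t,y_t^k), \Exp^{-1}_{y_t^k} y^*(x_t)\rangle_{y_t^k} \le -\tfrac{\mu}{2} d^2(y_t^k, y^*(x_t))$, and the Lipschitz bound $\|\gG_y g(x_t, y_t^k)\|_{y_t^k} \le L_g\, d(y_t^k, y^*(x_t))$ (using $\gG_y g(x_t, y^*(x_t)) = 0$). Substituting yields the claimed contraction with the modified curvature constant $\bar\zeta$. The main technical delicacy is the cross term from the retraction error: without Assumption \ref{ass:bar D} the inner product $\langle \Exp^{-1}_{y_t^k}(y^*(x_t)), e_t^k\rangle_{y_t^k}$ cannot be absorbed into a coefficient of $\|\gG_y g\|^2$, and this is precisely why $\bar D$ appears in the definition of $\bar\zeta$.
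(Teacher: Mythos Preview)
Your proposal is correct and follows essentially the same approach as the paper: both apply the trigonometric distance bound, split $\Exp^{-1}_{y_t^k}(y_t^{k+1})$ into the retraction direction $u$ plus an error controlled by Assumption \ref{ass:retrerror}, absorb the cross term via Cauchy--Schwarz and the bound $\bar D$ from Assumption \ref{ass:bar D}, and finish with strong convexity and the Lipschitz gradient bound exactly as in Lemma \ref{lem:conv of lower}. Your observation that Assumption \ref{ass:bar D} is precisely what allows the retraction error to be absorbed into the curvature constant $\bar\zeta$ is the key point, and it matches the paper's argument.
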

\begin{proof}
From Lemma \ref{lem:trig}, by the definition that $\Retr^{-1}_{y_t^k} y_t^{k+1} = -\frac{1}{b_{k+1}}\gG g(x_t,y_t^k)$ we have
\begingroup
\allowdisplaybreaks
\begin{align}
\label{equ:lem:conv of lower:retr}
&d^2(y_t^{k+1}, y^*(x_t)) \le d^2(y_t^k, y^*(x_t)) + \zeta d^2(y_t^k, y_t^{k+1}) - 2 \left\langle \Exp^{-1}_{y_t^k} y_t^{k+1} , \Exp^{-1}_{y_t^k} y^*(x_t) \right\rangle_{y_t^k}  \nnub\\
\le& d^2(y_t^k, y^*(x_t)) + \frac{1}{ b_{k+1}^2} \zeta c_u \| \gG_y g(x_t, y_t^k) \|^2_{y_t^k} - 2 \left\langle \Exp^{-1}_{y_t^k} y_t^{k+1} - \Retr^{-1}_{y_t^k} y_t^{k+1} , \Exp^{-1}_{y_t^k} y^*(x_t) \right\rangle_{y_t^k} \nnub\\
&+2 \frac{1}{ b_{k+1}} \left\langle \gG_y g(x_t, y_t^k) , \Exp^{-1}_{y_t^k} y^*(x_t) \right\rangle_{y_t^k} \nnub\\
\le&  d^2(y_t^k, y^*(x_t)) + \frac{1}{ b_{k+1}^2}\zeta c_u \| \gG_y g(x_t, y_t^k) \|^2_{y_t^k} +2 \frac{1}{ b_{k+1}} \left\langle \gG_y g(x_t, y_t^k) , \Exp^{-1}_{y_t^k} y^*(x_t) \right\rangle_{y_t^k} \nnub\\
&+ 2 \bar{D} \| \Exp^{-1}_{y_t^k} y_t^{k+1} - \Retr^{-1}_{y_t^k} y_t^{k+1}  \|_{y_t^k} \nnub\\
\le& d^2(y_t^k, y^*(x_t)) + \frac{1}{ b_{k+1}^2} (\zeta c_u + 2 \bar{D} c_R) \| \gG_y g(x_t, y_t^k) \|^2_{y_t^k} +2 \frac{1}{ b_{k+1}} \left\langle \gG_y g(x_t, y_t^k) , \Exp^{-1}_{y_t^k} y^*(x_t) \right\rangle_{y_t^k} \nnub\\
\le& \left(1 + \frac{1}{ b_{k+1}^2} (\zeta c_u + 2 \bar{D} c_R) L_{g}^2 - {\frac{\mu}{ b_{k+1}}} \right) d^2(y_t^k, y^*(x_t)),
\end{align}
\endgroup
where the second inequality follows from Assumption \ref{ass:retrerror} and 
\[
\langle \Retr^{-1}_{y_t^k} y_t^{k+1}, \Exp^{-1}_{y_t^k} y^*(x_t) \rangle_{y_t^k} = -\frac{1}{ b_{k+1}} \langle \gG_y g(x_t, y_t^k) , \Exp^{-1}_{y_t^k} y^*(x_t) \rangle_{y_t^k},
\]
the third inequality follows from Assumption \ref{ass:bar D} by employing \cite[Proposition 10.22]{boumal2023introduction}, i.e.,
\[
\|\Exp^{-1}_{y_t^k} y^*(x_t)\| = d(y_t^k,y^*(x_t)) \le \bar{D},
\]
and the fourth inequality follows from Assumption \ref{ass:retrerror}.
\end{proof}

Similar to \eqref{equ:Cbr}, we define the following thresholds for the step sizes $b_k$ and $c_n$.
\begin{equation*}
C_b := \max\left\{2\bar{\zeta} L_{g}\frac{L_{g}}{\mu},  b_0\right\}, ~~ 
C_c := \max\{L_{g},  c_0\},
\end{equation*}
where $\bar{\zeta} = \zeta c_u + 2\bar{D}c_{R}$ are defined in Lemma \ref{lem:conv of lower:retr}.

\textbf{Proof of Proposition \ref{prop:KtNt:retr}}
\begin{proof}
For AdaRHD-GD, denote
\begin{equation}
\label{equ:barK:retr}
\bar{K} := \frac{\log(C_b^2/ b_0^2)}{\log(1+\epsilon_y/C_b^2)} + \frac{1}{({\mu}/{\bar{b}} - \bar{\zeta} {L_{g}^2}/{ \bar{b}^2})}\log\left(\frac{\tilde{b}}{\epsilon_y}\right),
\end{equation}
and
\begin{equation*}
\bar{N}_{gd} := \frac{\log(C_c^2/ c_0^2)}{\log(1+\epsilon_v/C_c^2)} + \frac{ \bar{c}}{\mu}\log\left(\frac{\tilde{c}}{\epsilon_v}\right),
\end{equation*}
where $\tilde{b}:=\max\{\frac{L_{g}(\bar{b}-C_b)}{2},1\}$, $\tilde{c}:=\max\{L_{g}( \bar{c}-C_c),1\}$, $ \bar{b} := C_b + 2 L_{g}\left(\frac{2\epsilon_y}{\mu^2} + \frac{2L_{g}^2C_f^2}{\mu^2 a_0^2} + 2 \bar{\zeta}\log\left(\frac{C_b}{b_0}\right) + \bar{\zeta}\right)$ with $C_f = \left(\frac{2L_{g}^2\epsilon_v}{\mu^2} + \frac{4L_{g}^2l_{f}^2}{\mu^2} + 4l_{f}^2\right)^{\frac{1}{2}}$ is defined in Lemma \ref{lem:boundapprhyperg}, and $ \bar{c} := C_c + L_{g}\left(\frac{2\epsilon_y}{\mu^2} + \frac{8l_{f}^2}{\mu^2} + 2\log\left(\frac{C_c}{c_0}\right) + 1\right)$.

For AdaRHD-CG, denote
\begin{equation*}
\bar{N}_{cg} := \frac{1}{2}{ \log\left( \frac{4 L_g^3 l_f^2}{\mu^3 \epsilon_v} \right) }/{ \log\left( \frac{ \sqrt{L_{g}/\mu} + 1 }{ \sqrt{L_{g}/\mu} - 1 } \right) }.
\end{equation*}
First, we show that $K_t \le \bar{K}$ for all $0 \le t \le T$.

\textbf{If $k_1$ in Proposition \ref{prop:TKN:retr} does not exist}, we have $ b_{K_t} \le C_b$. 
By \cite[Lemma 2]{xie2020linear}, it holds that $K_t \le \frac{\log(C_b^2/ b_0^2)}{\log(1+\epsilon_y/C_b^2)}$. If $K_t > \frac{\log(C_b^2/ b_0^2)}{\log(1+\epsilon_y/C_b^2)}$, by $\|\gG_y g(x_t,y_t^{p})\|_{x_t}^2 \ge \epsilon_y$ and $ b_k \le C_{ b}$ holds for all $k < K^t$, we have
\begingroup
\allowdisplaybreaks
\begin{align}
\label{equ:Ktupbound1:retr}
b_{K_t}^2 =&  b_{K_t-1}^2 + \|\gG_y g(x_t,y_t^{K_t-1})\|_{y_t^{K_t-1}}^2 =  b_{K_t-1}^2\left(1+\frac{\|\gG_y g(x_t,y_t^{K_t-1})\|_{y_t^{K_t-1}}^2}{ b_{K_t-1}^2}\right) \nnub \\
\ge&  b_0^2 \prod_{k=0}^{K_t-1}\left(1+\frac{\|\gG_y g(x_t,y_t^{k})\|_{y_t^k}^2}{ b_k ^2}\right) \ge  b_0^2 \left(1+\frac{\epsilon_y}{C_b^2}\right)^{K_t} > b_0^2 \left((1+\frac{\epsilon_y}{C_b^2})^{1/\log(1+\epsilon_y/C_b^2)}\right)^{\log(C_b^2/ b_0^2)} = C_b^2.
\end{align}
\endgroup
This contradicts $ b_{K_t} \le C_b$.

\textbf{If $k_1$ in Proposition \ref{prop:TKN:retr} exists}, we have $ b_{k_1} \le C_b$ and $ b_{k_1+1} > C_b$. We first prove $k_1 \le \frac{\log(C_b^2/ b_0^2)}{\log(1+\epsilon_y/C_b^2)}$. If $k_1 > \frac{\log(C_b^2/ b_0^2)}{\log(1+\epsilon_y/C_b^2)}$, similar to \eqref{equ:Ktupbound1:retr}, we have 
\begin{equation*}
b_{k_1}^2 \ge  b_0^2 \left(1+\frac{\epsilon_y}{C_b^2}\right)^{k_1} > C_b^2,
\end{equation*}
which contradicts the setting that $ b_{k_1} \le C_b$.

By Lemma \ref{lem:conv of lower:retr}, we have
\begingroup
\allowdisplaybreaks
\begin{align}
\label{equ:distyk1:retr}
&d^2(y_t^{k_1}, y^*(x_t)) \le d^2(y_t^{k_1-1}, y^*(x_t)) + \zeta d^2(y_t^{k_1-1}, y_t^{k_1}) - 2 \left\langle \Exp^{-1}_{y_t^{k_1-1}} y_t^{k_1} , \Exp^{-1}_{y_t^{k_1-1}} y^*(x_t) \right\rangle_{y_t^{k_1-1}}  \nnub\\
\overset{(a)}{\le}& d^2(y_t^{k_1-1}, y^*(x_t)) + \zeta c_u \left\|\frac{ \gG_y g(x_t, y_t^{k_1-1})}{ b_{k_1}}\right\|^2_{y_t^{k_1-1}} - 2 \left\langle \Exp^{-1}_{y_t^{k_1-1}} y_t^{k_1} - \Retr^{-1}_{y_t^{k_1-1}} y_t^{k_1} , \Exp^{-1}_{y_t^{k_1-1}} y^*(x_t) \right\rangle_{y_t^{k_1-1}} \nnub\\
& +2 \frac{1}{ b_{k_1}}\left\langle \gG_y g(x_t, y_t^{k_1-1}) , \Exp^{-1}_{y_t^{k_1-1}} y^*(x_t) \right\rangle_{y_t^{k_1-1}} \nnub\\
\overset{(b)}{\le}&  d^2(y_t^{k_1-1}, y^*(x_t)) + \zeta c_u \left\|\frac{ \gG_y g(x_t, y_t^{k_1-1})}{ b_{k_1}}\right\|^2_{y_t^{k_1-1}} +2 \frac{1}{ b_{k_1}} \left\langle \gG_y g(x_t, y_t^{k_1-1}) , \Exp^{-1}_{y_t^{k_1-1}} y^*(x_t) \right\rangle_{y_t^{k_1-1}} \nnub\\
& + 2 \bar{D} \| \Exp^{-1}_{y_t^{k_1-1}} y_t^{k_1} - \Retr^{-1}_{y_t^{k_1-1}} y_t^{k_1}  \|_{y_t^{k_1-1}} \nnub\\
\overset{(c)}{\le}& d^2(y_t^{k_1-1}, y^*(x_t)) +(\zeta c_u + 2 \bar{D} c_R) \left\|\frac{ \gG_y g(x_t, y_t^{k_1-1})}{ b_{k_1}}\right\|^2_{y_t^{k_1-1}} +2 \frac{1}{ b_{k_1}} \left\langle \gG_y g(x_t, y_t^{k_1-1}) , \Exp^{-1}_{y_t^{k_1-1}} y^*(x_t) \right\rangle_{y_t^{k_1-1}} \nnub\\
\overset{(d)}{\le}& d^2(y_t^{k_1-1}, y^*(x_t)) + \bar{\zeta}\frac{\|\gG_y g(x_t,y_t^{k_1-1})\|_{y_t^{k_1-1}}^2}{ b_{k_1}^2} \le d^2(y_t^{0}, y^*(x_t)) + \bar{\zeta}\sum_{p=0}^{k_1-1}\frac{\|\gG_y g(x_t,y_t^{p})\|_{y_t^{p}}^2}{ b_{k+1}^2} \nnub \\
\overset{(e)}{\le}& d^2(y_{t-1}^{K_t-1}, y^*(x_t)) + \bar{\zeta} \sum_{p=0}^{k_1-1}\frac{\|\gG_y g(x_t,y_t^{p})\|_{y_t^{p}}^2/ b_0^2}{\sum_{k=0}^{p}\|\gG_y g(x_t,y_t^{k})\|_{y_t^{k}}^2/ b_0^2} \nnub \\
\overset{(f)}{\le}& 2 d^2(y_{t-1}^{K_t-1}, y^*(x_{t-1})) + 2d^2(y^*(x_{t-1}), y^*(x_t)) + \bar{\zeta} \log\left(\sum_{p=0}^{k_1-1}\frac{\|\gG_y g(x_t,y_t^{p})\|_{y_t^{p}}^2}{ b_0^2}\right) + \bar{\zeta} \nnub \\
\overset{(g)}{\le}& \frac{2\epsilon_y}{\mu^2} + \frac{2L_{g}^2\|\widehat \gG F(x_{t-1}, y_{t-1}^{K_{t-1}}, v_{t-1}^{N_{t-1}})\|_{y_{t-1}^{K_{t-1}}}^2}{\mu^2 a_t^2} + \bar{\zeta}\log\left(\sum_{p=0}^{k_1-1}\frac{\|\gG_y g(x_t,y_t^{p})\|_{y_t^{p}}^2}{ b_0^2}\right) + \bar{\zeta} \nnub \\
\overset{(h)}{\le}& \frac{2\epsilon_y}{\mu^2} + \frac{2L_{g}^2C_f^2}{\mu^2 a_0^2} + 2 \bar{\zeta}\log\left(\frac{C_b}{b_0}\right) + \bar{\zeta},
\end{align}
\endgroup
where (a) follows from the second inequality of\eqref{equ:lem:conv of lower:retr}, (b) follows from Assumption \ref{ass:bar D}, (c) follows from Assumption \ref{ass:retrerror}, (d) follows from the convexity of $g$, (e) follows from the warm start of $y_t^0$, (f) follows from Lemma \ref{lem:sum<log}, (g) follows from Lemmas \ref{lem:basicy*GF} \eqref{lem:basicy*GFy*x} and \ref{lem:yty*vtv*}, and (h) follows from Lemma \ref{lem:boundapprhyperg} and $ b_{k_1} \le C_b$.

For all $K > k_1$, by Lemma \ref{lem:conv of lower:retr}, we have
\begingroup
\allowdisplaybreaks
\begin{align}
\label{equ:distyk2:retr}
d^2(y_t^{K}, y^*(x_t)) \le& \left(1 -(\frac{\mu}{ b_k } - \bar{\zeta} \frac{L_{g}^2}{ b_k ^2})\right)d^2(y_t^{K-1}, y^*(x_t)) \le e^{-(\frac{\mu}{ b_k } - \bar{\zeta} \frac{L_{g}^2}{ b_k ^2})(K-k_1)}d^2(y_t^{k_1}, y^*(x_t)) \nnub \\
\le& e^{-(\frac{\mu}{ b_k } - \bar{\zeta} \frac{L_{g}^2}{ b_k ^2})(K-k_1)}\left(\frac{2\epsilon_y}{\mu^2} + \frac{2L_{g}^2C_f^2}{\mu^2 a_0^2} + 2 \bar{\zeta}\log\left(\frac{C_b}{b_0}\right) + \bar{\zeta}\right),
\end{align}
\endgroup
where the first inequality follows from $ b_k  \ge C_b \ge 2\bar{\zeta} L_{g}\frac{L_{g}}{\mu}$ and $1-m \le e^{-m}$ for $0<m<1$, specifically, we have $C_{ b} \ge 2\bar{\zeta} L_{g}\frac{L_{g}}{\mu}$ and it holds that $0 < \frac{\mu}{ b_k } - \bar{\zeta} \frac{L_{g}^2}{ b_k ^2} < 1$, and the second inequality follows from \eqref{equ:distyk1:retr}.

Similar to \eqref{equ:distyk3}, we have
\begin{equation}
\label{equ:distyk3:retr}
b_K =  b_{K-1} + \frac{\|\gG_y g(x_t, y_t^{K-1})\|_{y_t^{K-1}}^2}{ b_K + b_{K-1}} \le  b_{k_1} + \sum_{k = k_1}^{K-1}\frac{\|\gG_y g(x_t, y_t^{k})\|_{y_t^{k}}^2}{ b_{k+1}}.
\end{equation}
Similar to \eqref{equ:distyk4}, we have
\begingroup
\allowdisplaybreaks
\begin{align}
&d^2(y_t^{K}, y^*(x_t)) \le d^2(y_t^{K-1}, y^*(x_t)) + \bar{\zeta}\frac{\|\gG_y g(x_t, y_t^{K-1})\|_{y_t^{K-1}}^2}{ b_K^2} +  \frac{2\left\langle \gG_y g(x_t, y_t^{K-1}), \Exp^{-1}_{y_t^{K-1}} y^*(x_t) \right\rangle_{y_t^{K-1}}}{ b_K } \nnub \\
\le & d^2(y_t^{K-1}, y^*(x_t)) + \bar{\zeta}\frac{\|\gG_y g(x_t, y_t^{K-1})\|_{y_t^{K-1}}^2}{2 b_K \bar{\zeta} L_{g}\frac{L_{g}}{\mu}} -  \frac{\|\gG_y g(x_t, y_t^{K-1})\|_{y_t^{K-1}}}{ b_K L_{g}} \nnub \\
\le & d^2(y_t^{K-1}, y^*(x_t)) - \frac{(1-{\mu}/{2L_{g}})}{L_{g}} \frac{\|\gG_y g(x_t, y_t^{K-1})\|^2}{ b_K } \le d^2(y_t^{k_1-1}, y^*(x_t)) - \frac{(1-{\mu}/{2L_{g}})}{L_{g}}\sum_{k = k_1}^{K-1} \frac{\|\gG_y g(x_t, y_t^{k})\|^2}{ b_{k+1}}\nnub, 
\end{align}
\endgroup
where the second inequality follows from Lemma \ref{lem:co-coer} and $ b_K \ge C_b \ge 2 \bar{\zeta} L_{g}\frac{L_{g}}{\mu}$. 

By \eqref{equ:distyk1:retr}, we have
\begingroup
\allowdisplaybreaks
\begin{align}
(1-\frac{\mu}{2L_{g}})\sum_{k=k_1}^{K-1}\frac{\|\gG_y g(x_t, y_t^{k})\|_{y_t^{k}}^2}{ b_{k+1}} \le& L_{g}\left(d^2(y_t^{k_1}, y^*(x_t)) - d^2(y_t^{K}, y^*(x_t))\right) \le L_{g}d^2(y_t^{k_1}, y^*(x_t)) \nnub \\
\le & L_{g}\left(\frac{2\epsilon_y}{\mu^2} + \frac{2L_{g}^2C_f^2}{\mu^2 a_0^2} + 2 \bar{\zeta}\log\left(\frac{C_b}{b_0}\right) + \bar{\zeta}\right)\nnub. 
\end{align}
\endgroup
Then, by \eqref{equ:distyk3:retr} and $\frac{1}{1-\mu/2L_{g}} \le 2$, we have
\begin{equation*}
b_K \le C_b + 2 L_{g}\left(\frac{2\epsilon_y}{\mu^2} + \frac{2L_{g}^2C_f^2}{\mu^2 a_0^2} + 2 \bar{\zeta}\log\left(\frac{C_b}{b_0}\right) + \bar{\zeta}\right). 
\end{equation*}
Denote $\bar{b} := C_b + 2 L_{g}\left(\frac{2\epsilon_y}{\mu^2} + \frac{2L_{g}^2C_f^2}{\mu^2 a_0^2} + 2 \bar{\zeta}\log\left(\frac{C_b}{b_0}\right) + \bar{\zeta}\right)$. Then, under \eqref{equ:distyk2:retr}, considering the monotonicity of $\frac{\mu}{ b} - \bar{\zeta} \frac{L_{g}^2}{ b^2}$ w.r.t. $ b$ when $b \ge 2\bar{\zeta} L_{g}\frac{L_{g}}{\mu}$, it holds that $\frac{\mu}{ \bar{b}} - \bar{\zeta} \frac{L_{g}^2}{ \bar{b}^2} \le \frac{\mu}{ b_K} - \bar{\zeta} \frac{L_{g}^2}{ b_K^2}$ as $\bar{b} \ge b_K \ge 2\bar{\zeta} L_{g}\frac{L_{g}}{\mu}$, we have
\begin{equation}
\label{equ:distyk6:retr}
d^2(y_t^{K}, y^*(x_t)) \le e^{-(\frac{\mu}{ \bar{b}} - \bar{\zeta} \frac{L_{g}^2}{ \bar{b}^2}){(K-k_1)}}\left(\frac{2\epsilon_y}{\mu^2} + \frac{2L_{g}^2C_f^2}{\mu^2 a_0^2} + 2 \bar{\zeta}\log\left(\frac{C_b}{b_0}\right) + \bar{\zeta}\right) = e^{-(\frac{\mu}{ \bar{b}} - \bar{\zeta} \frac{L_{g}^2}{ \bar{b}^2}){(K-k_1)}}\frac{\bar{b}-C_b}{2L_{g}}.
\end{equation}
Since $k_1 \le \frac{\log(C_b^2/ b_0^2)}{\log(1+\epsilon_y/C_b^2)}$, by the definition of $\bar{K}$ in \eqref{equ:barK:retr}, it is obvious that $\bar{K} \ge k_1$. Therefore, similar to \eqref{equ:<epsiy1} and \eqref{equ:<epsiy2}, by \eqref{equ:distyk6:retr}, we have
\begin{equation*}
\|\gG_y g(x_t,y_t^{\bar{K}})\|_{y_t^{\bar{K}}}^2 \le L_{g}^2 d^2(y_t^{\bar{K}}, y^*(x_t)) \le e^{-(\frac{\mu}{ \bar{b}} - \bar{\zeta} \frac{L_{g}^2}{ \bar{b}^2}){(\bar{K}-k_1)}}\frac{L_{g}(\bar{b}-C_b)}{2} \le \epsilon_y,
\end{equation*}
which indicates that after at most $\bar{K}$ iterations, the condition $\|\gG_y g(x_t,y_t^{\bar{K}})\|_{y_t^{\bar{K}}}^2 \le \epsilon_y$ is satisfied, i.e., $K_t\le \bar{K}$ holds for all $t\ge 0$.

Regarding the total number of iterations $N_t$ required to solve the linear system, since solving it does not involve retraction mapping, the total iterations match those of Algorithm \ref{alg:AdaRHD}. The proof is complete.
\end{proof}

\subsection{Proof of Theorem \ref{thm:conv:retr}}
\begin{proof}
The proof of Theorem \ref{thm:conv:retr} follows a similar approach to Theorem \ref{thm:conv}, requiring only the substitution of $ L_F $ with $ \bar{L}_F $ and the definition of the constant $ C_{\mathrm{retr}} $ as 
\[
C_{\mathrm{retr}} := \left( F_0 + \frac{2\bar{L}_F^2}{a_0\mu^2} \right) \left( C_a + 2\left( F_0 + \frac{2\bar{L}_F^2}{a_0\mu^2} \right) \right),
\]
where $ F_0 $ is defined in \eqref{equ:lem:atc0:retr} of Lemma \ref{lem:at:retr}. Due to this structural similarity, the remaining proof is omitted.
\end{proof}

\section{Experimental details}
\label{sec:exper detail}
This section expands on the experiments discussed in Section \ref{sec:exper} and presents supplemental empirical evaluations. Subsection \ref{sec:exper setting} details the experimental configurations, while Subsection \ref{sec:addi exper} includes an additional implementation to further demonstrate the robustness of our proposed algorithm through additional empirical validation.

\subsection{Experimental Settings}
\label{sec:exper setting}
All implementations are executed using the Geoopt framework \citep{kochurov2020geoopt}, matching the reference implementation \citep{han2024framework} to guarantee equitable comparison conditions. Furthermore, all the experiments are implemented based on Geoopt \cite{kochurov2020geoopt} and are implemented using Python 3.8 on a Linux
server with 256GB RAM and 96-core AMD EPYC 7402 2.8GHz CPU.

\subsubsection{Simple problem}
For $n = 100$, the maximum number of outer iterations in RHGD \citep{han2024framework} is set to $200$, whereas for $n = 1000$, this number is increased to $400$. In Algorithm \ref{alg:retr}, the number of outer iterations is set to $T = 1000$ for AdaRHD-GD and $T = 10000$ for AdaRHD-CG. Following \cite{han2024framework}, we set $\lambda = 0.01$ and fix the step sizes in RHGD as $\eta_x = \eta_y = 0.5$. To ensure consistency, the initial step sizes in Algorithm \ref{alg:retr} are set to $a_0 = b_0 = c_0 = 2$.

For solving the linear system: we adopt the conjugate gradient (CG) method as the default approach in RHGD. For our algorithm, the CG procedure is terminated when the residual norm falls below a tolerance of $10^{-10}$ or the number of CG iterations reaches $50$. Note that the tolerance $10^{-10}$ for CG procedure can be seen as $\mathcal{O}(\frac{1}{T^{\alpha}})$ with $\alpha > 1$ is a constant, this will not influence the total iterations and the complexity results since the convergence rate of the CG procedure is linear. Moreover, for the GD procedure, we employ the stopping criteria specified in Algorithm \ref{alg:retr} or terminate when the number of iterations reaches $\min\{50\lfloor t/5\rfloor, 500\}$, where $\lfloor\cdot\rfloor$ denotes the floor function, and $t$ represents the iteration round $t$. This warm-start-inspired strategy aims to rapidly obtain a reasonable solution during early iterations, and progressively increase internal iterations to satisfy the gradient norm tolerance $\epsilon_v$ in later stages.

For solving the lower-level problem, our algorithm follows the stopping criteria outlined in Algorithm \ref{alg:AdaRHD} or terminates when the number of iterations reaches $\min\{50\lfloor t/5\rfloor, 500\}$. For RHGD, the maximum number of iterations is set to either $50$ or $20$. 

Moreover, the number of outer iterations in our algorithm is set to $T = 1000$ for AdaRHD-CG and $T = 10000$ for AdaRHD-GD. Additionally, our algorithm terminates if the approximate hypergradient norm falls below $10^{-4}$. For RHGD, the maximum number of outer iterations is set to $200$.

\subsubsection{Robustness analysis}
The experimental configurations in Subsection \ref{sec:robust} are maintained consistently across all comparative analyses in Subsection \ref{sec:simple}, except that the number of outer iterations is set to be $T = 300$ for AdaRHD-CG and $T = 500$ for AdaRHD-GD.

\subsection{Additional experiments: hyper-representation problem}
\label{sec:addi exper}

\subsubsection{Shallow hyper-representation for regression}
In this subsection, we adopt a shallow regression framework from \cite{han2024framework}, incorporating a Stiefel manifold constraint \cite{huang2017riemannian,harandi2017dimensionality,horev2016geometry} to preserve positive-definiteness in learned representations. The SPD representation is transformed into Euclidean space using the matrix logarithm, thereby establishing a bijection between SPD and symmetric matrices, followed by vectorizing the upper-triangular entries via the ${\rm vec}(\cdot)$ operation. We maintain the problem framework established in Subsection \ref{sec:robust}, replacing the cross-entropy loss with the least-squares loss. The revised problem is as follows:
\begin{equation}
\label{equ:shallow}
\begin{array}{lcl}
&\min\limits_{\bA \in {\rm St}(d,r)} &\sum\limits_{i \in \gD_{\rm val}}\frac{( {\rm vec}(\log(\bA^\top \bD_i \bA)) \beta^*(\bA) - y_i )^2}{2|\gD_{\rm val}|}, \\
&{\rm s.t.} &\beta^*(\bA) = \argmin\limits_{\beta \in \sR^{{r(r+1)/2}}}  \sum\limits_{i \in \gD_{\rm tr}} \frac{( {\rm vec}(\log(\bA^\top \bD_i \bA)) \beta - y_i )^2}{2|\gD_{\rm tr}|} + \frac{\lambda}{2} \| \beta \|^2, 
\end{array}
\end{equation}
where $\lambda > 0$ is the regular parameter. Following \cite{han2024framework}, we set $d=50$, $r=10$, and $\lambda=0.1$.

In Problem \eqref{equ:shallow}, the upper-level objective is optimized on the validation set, whereas the lower-level problem is solved on the training set. Following \cite{han2024framework}, each element $y_i$ is defined by $y_i = {\rm vec}(\log(\bA^\top \bD_i \bA)) \beta + \epsilon_i$, where $\bA$ and $\bD_i$ are randomly generated matrices, $\beta$ is a randomly generated vector, and $\epsilon_i$ represents a Gaussian noise term. We generate $n$ samples of $\bD_i$, evenly divided between the validation and training sets. To align with the step size initialization in \cite{han2024framework}, we initialize parameters $a_0$, $b_0$, and $c_0$ in our algorithm to 20. Moreover, we configure the number of outer iterations as $T = 1000$ for AdaRHD-CG and $T = 10000$ for AdaRHD-GD. All other settings align with Section \ref{sec:simple}.

\begin{figure}[htp!]
\centering
\begin{minipage}{0.9\linewidth}
\centering
\includegraphics[width=0.24\textwidth]{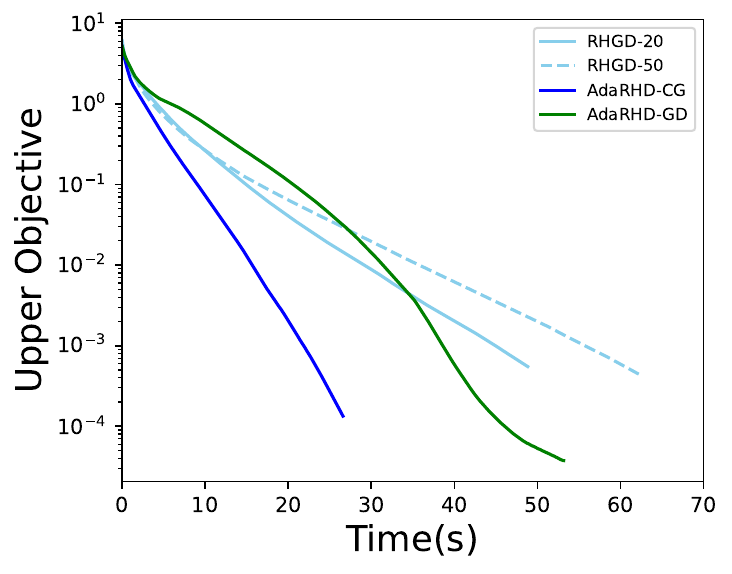}
\includegraphics[width=0.24\textwidth]
{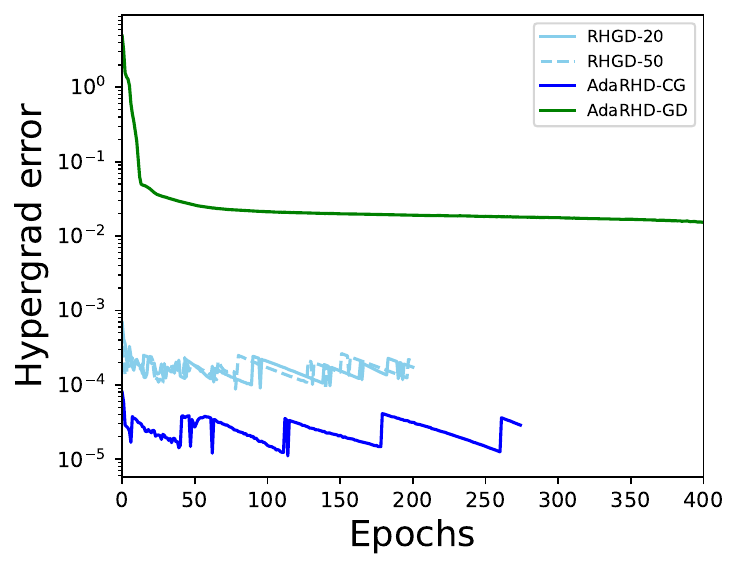}
\includegraphics[width=0.24\textwidth]{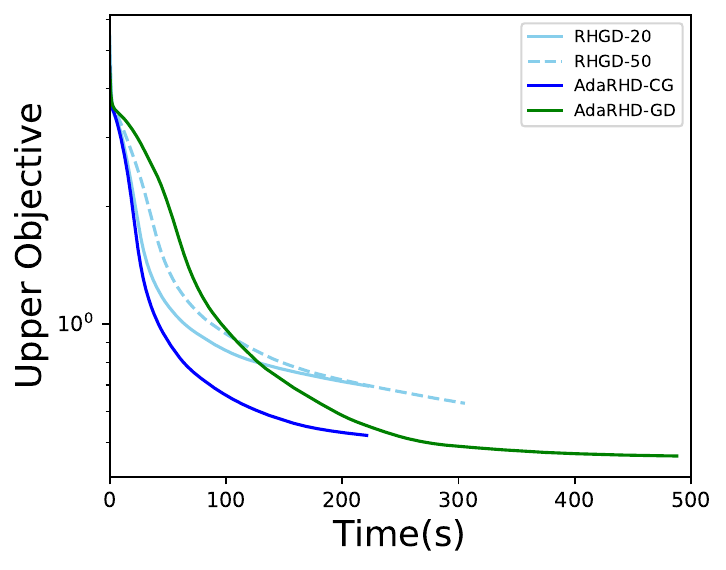}
\includegraphics[width=0.24\textwidth]
{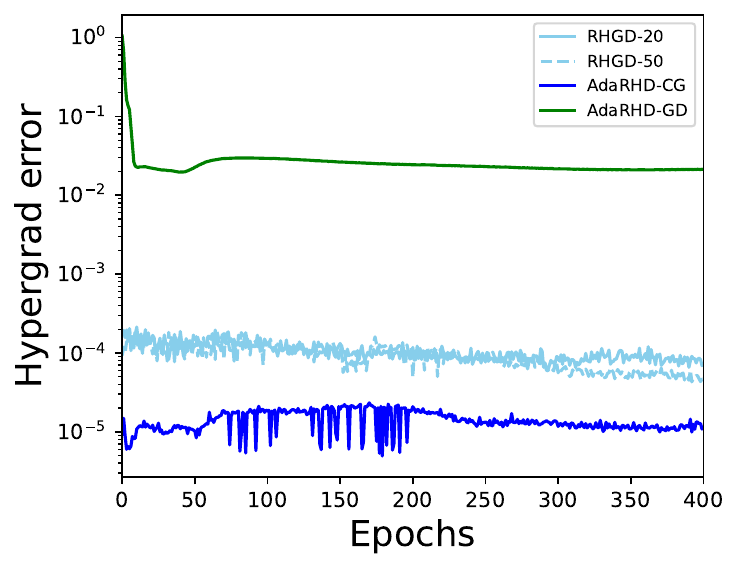}
\caption{Shallow hyper-representation for regression (Left two column: $n=200$, Right two column: $n=1000$).}
\label{hyrep_shallow}
\end{minipage}
\end{figure}

In Figure \ref{hyrep_shallow}, we illustrate the validation set loss (Upper Objective) versus time for $n=200$ and $n=1000$, respectively. We observe that AdaRHD-CG converges more effectively and rapidly to a smaller objective loss compared to AdaRHD-GD and RHGD. Furthermore, note that although AdaRHD-GD initially converges more slowly than RHGD (potentially due to imprecise hypergradient estimation affecting the upper-level loss reduction), Figure \ref{hyrep_shallow} demonstrates that AdaRHD-GD achieves a lower objective loss than RHGD at specific time intervals. These results collectively highlight the efficiency and robustness of our algorithm, as demonstrated in Section \ref{sec:simple}.

\subsubsection{Deep hyper-representation for classification}
\label{sec:deep}
In this subsection, we further evaluate the efficiency and robustness of our proposed algorithm by extending the deep hyper-representation classification framework introduced in Subsection \ref{sec:robust} to dataset sampling ratios of $12.5\%$ and $25\%$. For each configuration, five independent trials are executed using distinct random seeds. In each trial, a randomly sampled validation set is reserved for the upper-level problem, with an equally sized training partition allocated to the lower-level task. All remaining experimental parameters align with those defined in Subsection \ref{sec:robust}.

\begin{figure}[htp!]
\centering
\begin{minipage}{0.9\linewidth}
\centering
\includegraphics[width=0.30\textwidth]{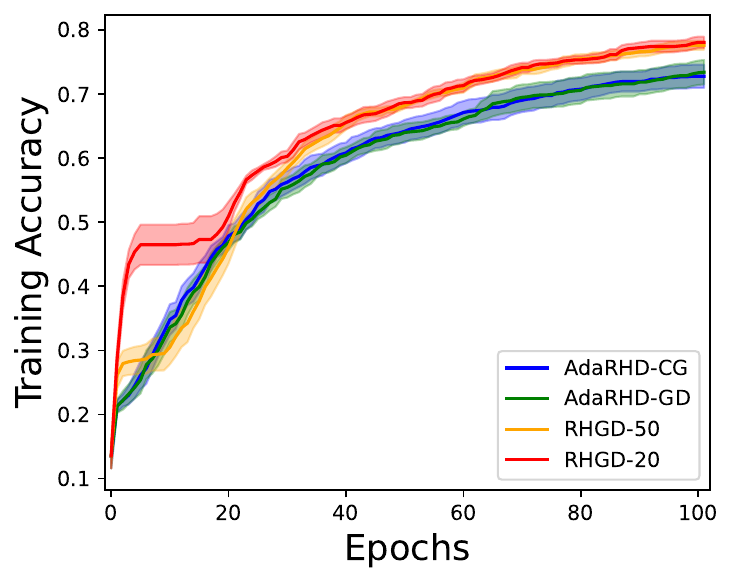}
\includegraphics[width=0.30\textwidth]{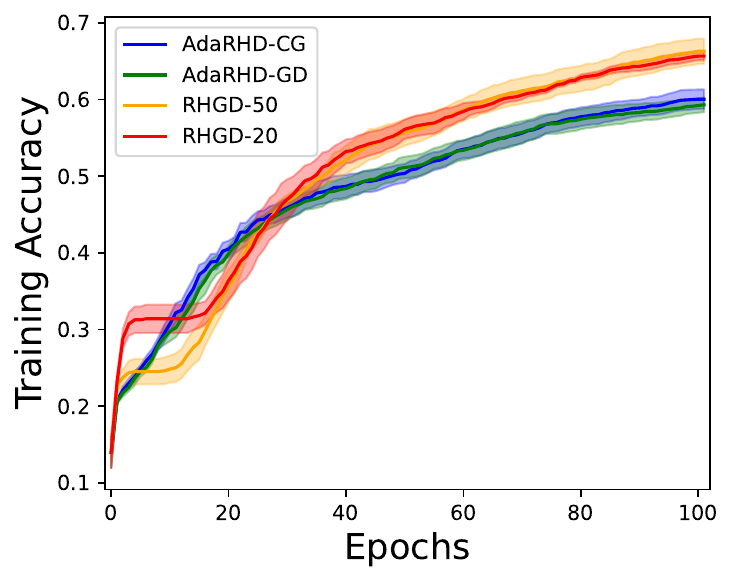}
\caption{Deep hyper-representation for classification (Left: sampling ratio 12.5\%, Right: sampling ratio 25\%).}
\label{hyrep_deep}
\end{minipage}
\end{figure}

Figure \ref{hyrep_deep} depicts the validation accuracy against outer epochs (iterations) with $12.5\%$ and $25\%$ dataset sampling ratios (due to computational constraints, a subset of the full dataset is utilized to ensure manageable training durations). Table \ref{tab: acc_time} summarizes the computational time required to achieve specified validation accuracy thresholds across algorithms. For each random seed, an independent data subset is sampled. Our algorithm exhibits rapid validation accuracy improvements during initial outer iterations, followed by more gradual advancement in later stages. This behavior likely originates from the inner-loop strategy: early iterations necessitate more intensive inner-loop computations to attain high precision, whereas later stages leverage accumulated progress, thereby requiring fewer inner-loop iterations. In contrast, RHGD-20 inner iterations demonstrate rapid initial accuracy gains but experience pronounced degradation subsequently, particularly at the $25\%$ sampling ratio, where its accuracy drops below that of our method. This instability is inherent in RHGD when fewer inner iterations are employed.

Furthermore, while RHGD-20 exhibits faster time-to-$50\%$-accuracy, its elevated variance (evident in Table \ref{tab: acc_time}) confirms inherent instability. Notably, our method achieves target accuracy faster than RHGD despite requiring additional outer iterations. This advantage, maintained under identical initial step sizes, demonstrates both the efficacy and stability of our adaptive approach in addressing Riemannian bilevel optimization problems.

\begin{table}[htp!]
\centering
\caption{Time required for each algorithm to first achieve a specified validation accuracy. ``Time(s) to $X\%$'' is defined as the time elapsed until the validation accuracy first reaches $X\%$. Values are presented as mean $\pm$ standard deviation across five independent trials using fixed random seeds for all algorithms. Statistically significant results are marked in bold. ``Data ratio'' refers to the proportion of the full dataset.}
\label{tab: acc_time}
\resizebox{1.0\textwidth}{!}{
\begin{tabular}{|l|c|c|c|c|}
\hline
& AdaRHD-CG & AdaRHD-GD & RHGD-50 & RHGD-20 \\
\hline
\multicolumn{5}{|c|}{data ratio = $12.5\%$} \\
\hline
Time(s) to $50\%$ & $878.60 \pm$ \textbf{90.44} & $1258.83 \pm 138.54$ & $1188.66 \pm 148.46$ & \textbf{530.03} $\pm$ 306.29\\
\hline
Time(s) to $70\%$ & \textbf{1666.17} $\pm$ 
\textbf{96.74} & $1865.13 \pm 426.80$ & $3070.22 \pm 169.30$ & $2085.61 \pm 236.33$\\
\hline
\multicolumn{5}{|c|}{data ratio = 25\%} \\
\hline
Time(s) to $50\%$ & \textbf{2511.13} $\pm$ \textbf{233.16} & $3408.80 \pm 508.51$ & $3944.26 \pm 589.55$ & $2598.87 \pm 645.08$\\
\hline
\end{tabular}
}
\end{table}

\subsubsection{Robust optimization on manifolds}
In this subsection, following \cite[Section 7.1]{li2025riemannian}, we consider the robust optimization on manifolds, which have the following forms:
\begin{equation}
\label{equ:robust opti}
\begin{array}{lcl}
&\min\limits_{p \in \Delta_n} & \left\|p - \frac{\mathbf{1}}{n}\right\|^2 - \sum_{i=1}^n p_i \ell(y;\xi_i)\\  
&{\rm s.t.} & y \in \argmin\limits_{y \in \mathbb{S}_{++}^d} \sum_{i=1}^n p_i \ell(y;\xi_i),
\end{array}
\end{equation}
where $\Delta_n:=\{p\in\mathbb{R}^n: \sum_{i=1}^n p_i = 1, p_i > 0\}$ and $\mathbb{S}_{++}^d: = \{y\in\mathbb{R}^{d\times d}:y \succ \bzero\}$ represent the multinomial manifold (or probability simplex) and positive definite matrix space, respectively. For robust Karcher mean (KM) problem, $\ell(y;\xi_i)$ in Problem \eqref{equ:robust opti} represents the geodesic distance of two positive definite matrices \citep{bhatia2009positive}:
$$\ell(y;\xi_i) = \| \log(y^{-1/2} \xi_i y^{-1/2}) \|^2_F,$$
where $\xi_i$’s are the symmetric positive definite data matrices.
For robust maximum likelihood estimation (MLE) problem, $\ell(y;\xi_i)$ in Problem \eqref{equ:robust opti} represents the log likelihood of the Gaussian distribution \citep{sra2015conic}:
$$
\ell(y;\xi_i) = \frac{1}{2} \log \det(y) + \frac{{\xi_i}^{\top} y^{-1} \xi_i}{2},
$$
where $\xi_i$’s are sample vectors from the Gaussian distribution.
 
In this experiment, we set the step sizes as $\eta_x = \eta_y = 1/\alpha_0 = 1/\beta_0 = 1/\gamma_0 = 0.1$, with all other settings identical to those in Section \ref{sec:robust}. For the robust Karcher mean problem, we set $d = 20$ and $n \in \{5, 10, 20, 50\}$, randomly generating the symmetric positive definite data matrices $\xi_i$'s. For the robust maximum likelihood estimation problem, we set $d = 50$ and $n \in \{100, 300, 500, 1000\}$, and randomly generate the sample vectors from a Gaussian distribution with mean $\mathbf{0}$ and covariance matrix being a random positive semidefinite symmetric matrix. We use five different random seeds to conduct experiments for each group. The results are shown in Figure \ref{Fig: robust opt}, where the x-axis represents the average time, and the y-axis represents the average value of the ergodic performance $\min_{i \in [0,t]} \|\widehat{\gG} F(x_i, y_i^{K_i}, v_i^{N_i})\|_{x_i}^2$. Note that for the MLE experiment with $n = 100$, we do not report the results of RHGD-20 because it fails to converge in this case. As shown in Figure \ref{Fig: robust opt}, compared to RHGD, AdaRHD achieves superior results more efficiently and robustly, which further demonstrates the superiority and robustness of our algorithm.

\begin{figure}[htp!]
\centering
\begin{subfigure}{0.24\textwidth}
\includegraphics[width=\textwidth]{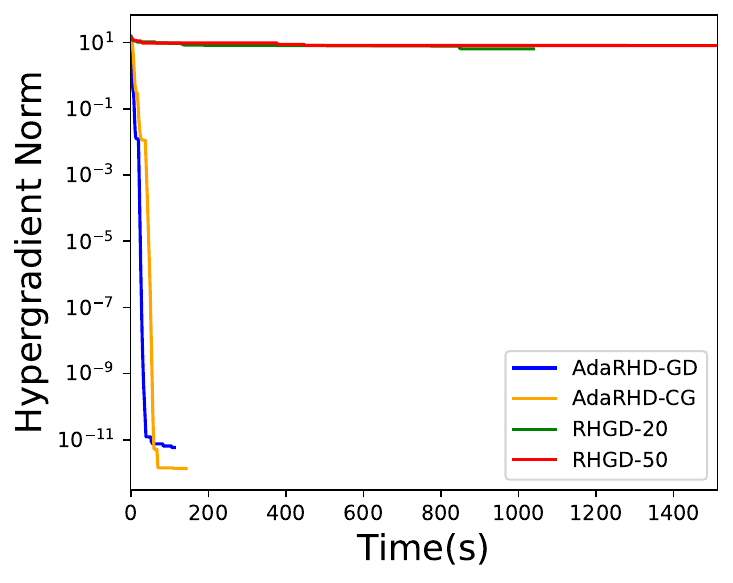}
\subcaption{$n=5$}
\end{subfigure}
\begin{subfigure}{0.24\textwidth}
\includegraphics[width=\textwidth]{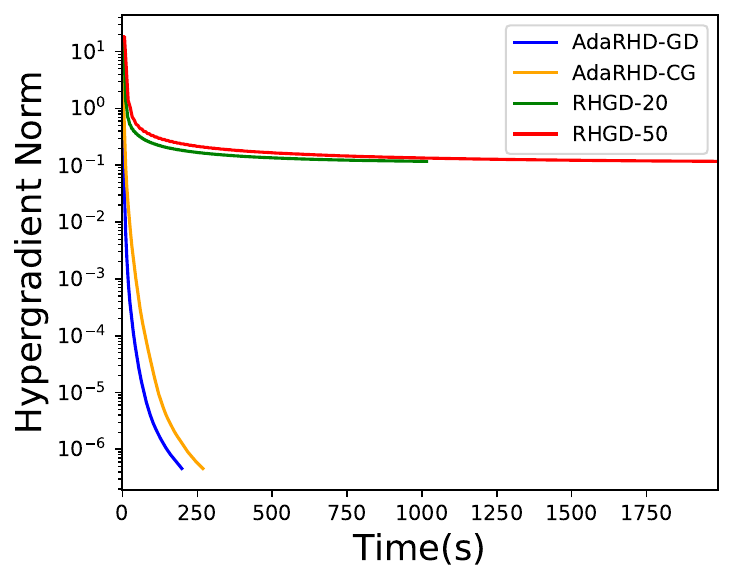}
\subcaption{$n=10$}
\end{subfigure}
\begin{subfigure}{0.24\textwidth}
\includegraphics[width=\textwidth]{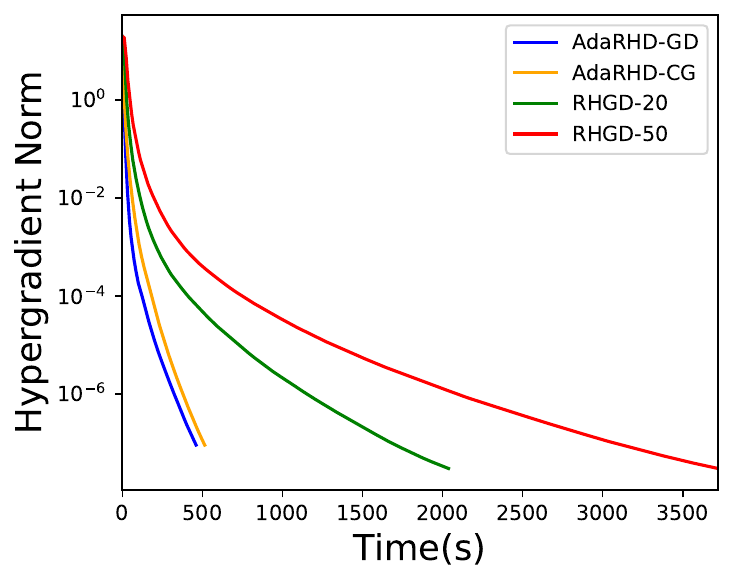}
\subcaption{$n=20$}
\end{subfigure}
\begin{subfigure}{0.24\textwidth}
\includegraphics[width=\textwidth]{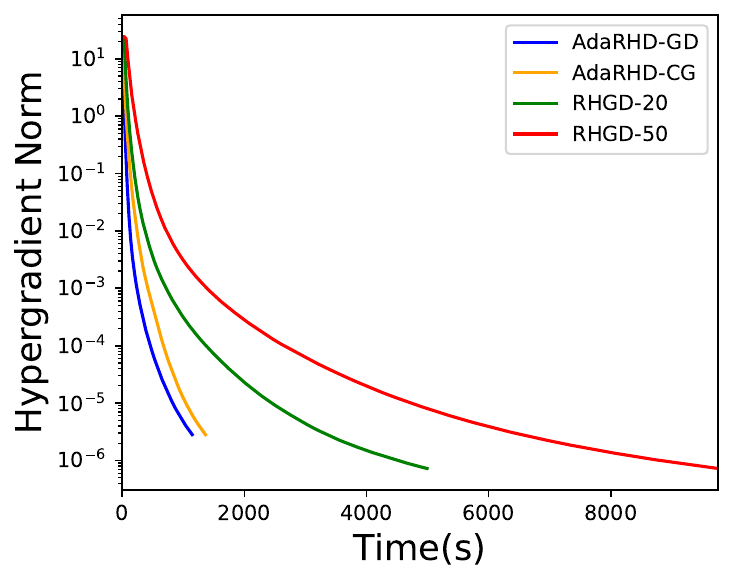}
\subcaption{$n=50$}
\end{subfigure}  
\\
\begin{subfigure}{0.24\textwidth}
\includegraphics[width=\textwidth]{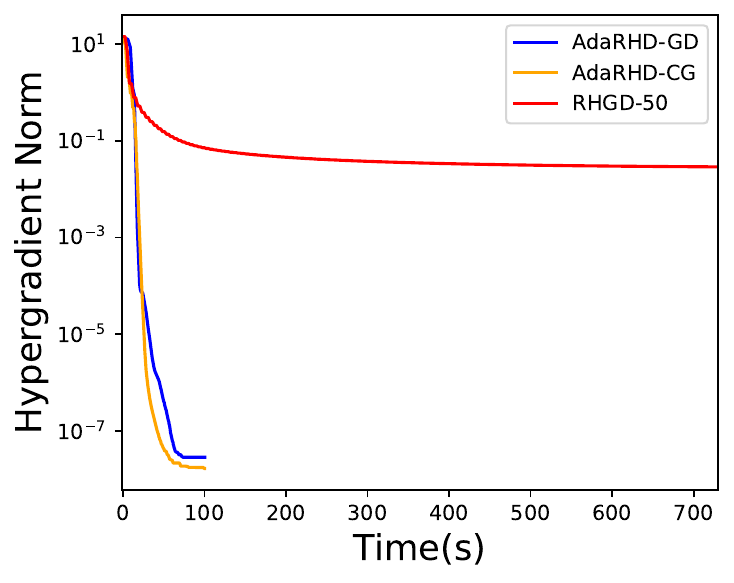}
\subcaption{$n=100$}
\end{subfigure}
\begin{subfigure}{0.24\textwidth}
\includegraphics[width=\textwidth]{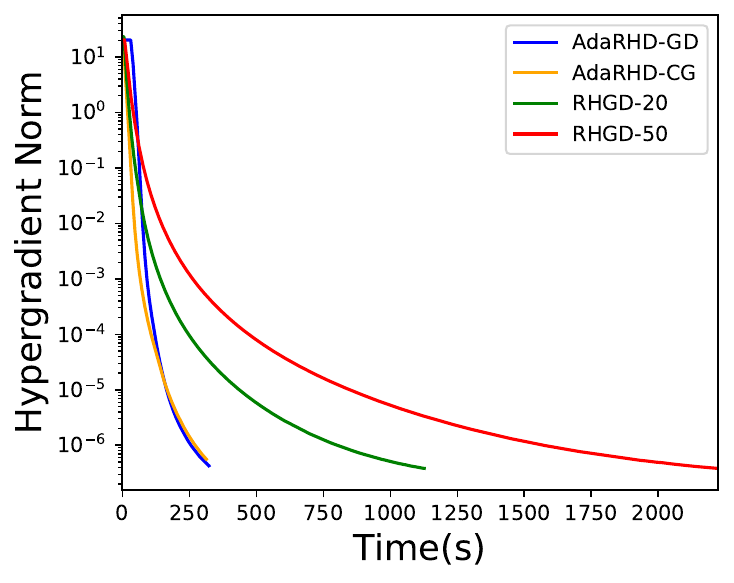}
\subcaption{$n=300$}
\end{subfigure}
\begin{subfigure}{0.24\textwidth}
\includegraphics[width=\textwidth]{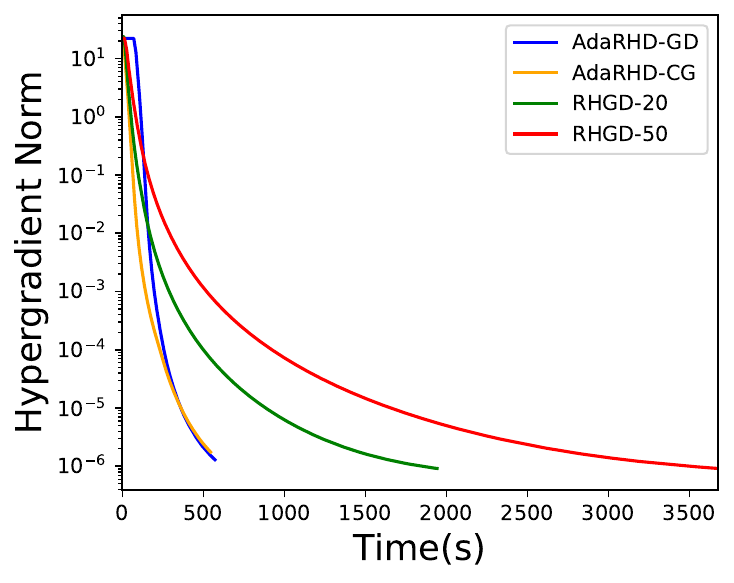}
\subcaption{$n=500$}
\end{subfigure}
\begin{subfigure}{0.24\textwidth}
\includegraphics[width=\textwidth]{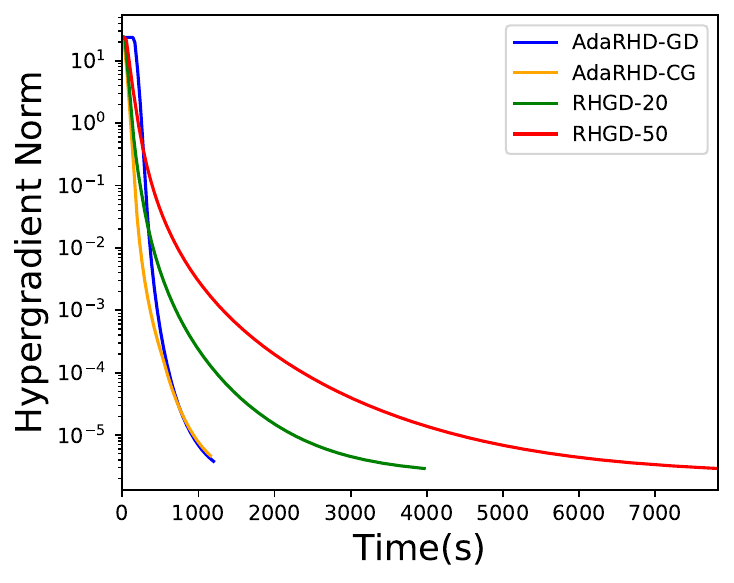}
\subcaption{$n=1000$}
\end{subfigure}
\caption{Robust optimization on manifolds (Top row: KM model with $d=20$ and different $n$, Bottom row: MLE model with $d=50$ and different $n$).}
\label{Fig: robust opt}
\end{figure}

\end{document}